\newcommand{\Zpart}{\mathsf{Z}} 
\renewcommand{\epsilon}{\varepsilon}
\newcommand{\invcirc}{\mathcal B} 
\newcommand{\loc}{{\rm loc}} 
\newcommand{\myline}{L} 
\newcommand{\mycirc}{S} 
\newcommand{\ball}{\Lambda} 
\newcommand{\denseset}{\Sigma} 
\newcommand{\mybundle}{N} 
\newcommand{\first}{z} 
\newcommand{\second}{{w}} 
\newcommand{\bsecond}{{\bm w}} 
\newcommand{\secondscalar}{{w}} 
\newcommand{\firstinfinity}{\zeta}
\newcommand{\bsecondinfinity}{\bm \omega}
\newcommand{\fixone}{\eta_{\text{\tiny{1}}}} 
\newcommand{\fixtwo}{\eta_{\text{\tiny{2}}}}
\newcommand{\const}{K} 
\newcommand{\nbhd}{\Omega} 
\newcommand{\WW}{\mathcal W} 
\newcommand{\bidisk}{X} 
\newcommand{\ra}{{real analytic}} 
\newcommand{\stable}{\mathcal W^s_{\loc}(\invcirc)}
\newcommand{\R}{\mathcal R}
\newcommand{\Rphys}{\mathcal R}
\newcommand{\C}{\mathbb{C}}
\newcommand{\K}{C}
\newcommand{\dist}{\mbox{dist}}
\newcommand{\T}{\mathbb{T}}
\newcommand{\Cphys}{\mathcal C}
\newcommand{\fp}{\eta'}
\newcommand{\fptwo}{\eta}
\newcommand{\blowup}{w}
\newcommand{\one}{\lambda}
\newcommand{\two}{\tau}
\def\B0{{\mathbf{0}}}
\newcommand{\FF}{\mathcal F}
\newcommand{\LL}{{\mathcal L}}
\newcommand{\LLzero}{\LL_0}
\newcommand{\LLone}{\LL_1}
\newcommand{\seq}{x_k}
\newcommand{\subseq}{x_{k_j}}
\newcommand{\vseq}[1]{x_{k,#1}^v}
\newcommand{\hseq}[1]{x_{k,#1}^h}
\newcommand{\vsubseq}[1]{x_{k_j,#1}^v}
\newcommand{\hsubseq}[1]{x_{k_j,#1}^h}
\newtheorem{theorem}{Theorem}[section]
\newtheorem*{theorem1}{Theorem A}
\newtheorem*{theorem1prime}{Theorem A'}
\newtheorem*{theorem2}{Theorem B}
\newtheorem*{theorem2prime}{Theorem B'}
\newtheorem{lemma}[theorem]
{Lemma}
\newtheorem{proposition}[theorem]
{Proposition}
\newtheorem{corollary}[theorem]
{Corollary}
\newtheorem*{rea}{Rea's Theorem in Codimension 1}
\theoremstyle{definition}
\newtheorem{definition}{Definition}
\theoremstyle{remark}
\newtheorem*{remark}{Remark}
\title{Superstable manifolds of invariant circles and co-dimension 1 B\"ottcher functions}
\subjclass{}
\email{srkaschn$@$iupui.edu}
\address{ %
IUPUI Department of Mathematical Sciences\\
LD Building, Room 257\\
402 North Blackford Street\\
Indianapolis, Indiana 46202-3267\\
 United States }
\email{rroeder@math.iupui.edu}
\address{ %
IUPUI Department of Mathematical Sciences\\
LD Building, Room 224Q\\
402 North Blackford Street\\
Indianapolis, Indiana 46202-3267\\
 United States }
\begin{document}

\begin{abstract}
Let $f: X~\dashrightarrow~X$ be a dominant meromorphic self-map, where $X$ is a compact, connected complex manifold of dimension $n >
1$.  Suppose there is an embedded copy of $\mathbb{P}^1$ that is invariant
under $f$, with $f$ holomorphic and transversally superattracting with degree
$a$ in some neighborhood.  Suppose $f$ restricted to this line is given by $z
\mapsto z^b$, with resulting invariant circle $S$.  We prove that if $a \geq
b$, then the local stable manifold $\WW^s_\loc(S)$ is real analytic.
In fact, we state and prove a suitable localized version that can be useful in wider contexts.
We then show that the condition $a \geq b$ cannot be relaxed without adding additional hypotheses by presenting two examples with $a < b$ for which $\WW^s_\loc(S)$ is not real analytic in the neighborhood of any point.
\end{abstract}

\maketitle

\section{Introduction.}


Let $f: X \dashrightarrow X$ be a dominant meromorphic self-map of a compact, 
connected complex manifold $X$ of dimension $n > 1$.  Here, the focus is on
the situation in which there is $\myline \subset X$, an embedded copy of $\mathbb{P}^1$,
with $f$ holomorphic in a neighborhood of $\myline$, $\myline$ is invariant, 
and $f|\myline$ is conjugate to $z \mapsto z^b$. 
We also assume $\myline$ is  transversally
superattracting of degree $a$, that is, the local coordinates of $f$ transverse
to $\myline$ vanishes to order $a$.  This is described more precisely at the beginning of $\S2$.
Although this is a rather special situation, it has appeared in examples from
\cite{roeder2005,hruska_roeder2008,blr1,blr2}.  For such maps, the Julia set of
$f|\myline$ is an invariant circle $\mycirc$, which is a hyperbolic set for
$f$.  The local stable manifold $\WW^s_\loc(\mycirc)$ is a real $2n-1$
dimensional manifold.   We will prove:

\begin{theorem1}
\label{THM1} 
If $a \geq b$, then $\WW^s_\loc(\mycirc)$ has real analytic regularity.
\end{theorem1}

To prove the theorem, we will localize to the situation to a tubular
neighborhood $\mybundle$ of $\myline$ which is forward invariant under $f$.
Theorem A is a direct consequence of the following:

\begin{theorem1prime}
\label{THM1_PRIME}
Let $\mybundle$ be a complex manifold with $\dim(\mybundle) \geq 2$, containing an embedded projective line $\myline$.  Suppose
$f \colon \mybundle \rightarrow \mybundle$ a dominant holomorphic map,
$\myline$ is invariant and transversally superattracting with degree $a$, 
and $f|\myline$ is conjugate to $z \mapsto z^b$, having invariant circle
$\mycirc$.  If $a \geq b$, then $\WW^s_\loc(\mycirc)$ has real analytic
regularity.
\end{theorem1prime}

For a diffeomorphism, the existence and regularity of the local stable manifold
for a hyperbolic invariant manifold $\mybundle$ has been studied extensively
Hirsch-Pugh-Shub in \cite{HPS}.  A strong form of hyperbolicity known as 
{\em normal hyperbolicity} is assumed in order to guarantee a $C^1$ local stable 
manifold.  Specifically, $\mybundle$ is called normally hyperbolic for $f$ if the
expansion of $Df$ in the unstable direction transverse to $\mybundle$ dominates the 
maximal expansion of $Df$ tangent to $\mybundle$ and the contraction of $Df$ in the 
stable direction transverse to $\mybundle$ dominates the maximal contraction of 
$Df$ tangent to $\mybundle$; see \cite[Theorem 1.1]{HPS}.  For $C^r$ regularity, 
there is an analogous condition in terms of the $r$-th power of the maximal 
expansion/contraction tangent to $\mybundle$.

Although the maps considered in this paper are many-to-one, they also do not
fit in the context of \cite{HPS} since $f|\myline$ is conformal, forcing that
the rates of expansion tangent to $\mycirc$ and transverse to $\mycirc$ are
equal.  Thus, $\mycirc$ is not normally hyperbolic.

In \S \ref{SEC:PROOF} we prove Theorem A' by constructing a semi-conjugacy
between $f$ and $z\mapsto~z^b$ on a forward invariant neighborhood of
$\mycirc$.  The construction is similar to the proof of the well-known B\"ottcher's Theorem from one-dimensional complex dynamics \cite{BOTTCHER}; see also \cite[Ch.~9]{milnor}.  While B\"ottcher's Theorem refers to a holomorphic change of coordinate (often called a B\"ottcher coordinate) defined in the neighborhood of a superattracting fixed point, the function we construct here is neither a coordinate, nor is it defined in a full neighborhood of a superattracting fixed point.  However, by analogy, we call it a ``co-dimension 1  B\"ottcher function.''  

Those interested in the mathematical legacy of B\"ottcher should see
\cite{STAWISKA}.  We will now briefly describe variants of B\"ottcher's Theorem
in higher dimensions.  It was shown by Hubbard and Papadopol in
\cite{hubbard_papadopol} that a B\"ottcher coordinate in higher dimension
cannot exist in general.  With additional hypotheses, their existence has been
proved in \cite[Theorem 3.2]{UEDA} and \cite{USHIKI_BOTT}.  A more detailed
criterion for existence of a B\"ottcher coordinate is presented in \cite{BEK}.
The related problem of conjugating a polynomial endomorphism to its highest
degree terms in a neighborhood of the hyperplane at infinity is studied in
\cite[Theorem 9.3]{hubbard_papadopol}, \cite[Theorem 7.4]{BEDFORD_JONSSON},
\cite{BEDFORD_JONSSON_POTENTIAL}, and \cite[Theorem 1]{PENG}.  These authors
prove that such a conjugacy exists on the stable set of the Julia set at
infinity, so long as it satisfies suitable hyperbolicity.  More recent studies
of superattracting behavior appear in \cite{FAVRE_JONSSON,GIGNAC,GIGNAC_RUGGIERO,RUGGIERO}.

The proof of Theorem A' is followed by \S \ref{SEC:EXAMPLES}, where we provide applications to certain specific examples, including those from \cite[\S 6.2]{hruska_roeder2008} and \cite{roeder2005}. 

In \S \ref{SEC:NON_ANALYTIC}, we show that the condition that $a \geq b$ cannot
be improved without adding additional hypotheses.  We'll consider two maps for
which $a < b$ and $\WW^s_\loc(\mycirc)$ is not analytic.  One of them is the
Migdal-Kadanoff renormalization map $R$ for the Ising model on the Diamond
Hierarchical Lattice (DHL) that was studied extensively in \cite{blr1,blr2}.
It has $a=2$ and $b=4$.  The other is a polynomial skew product with $a=2$ and
$b=3$.

Let us comment a bit more on the map $R$.  For this map, the invariant
circle $S$ has the physical context of being related to the bottom of the Lee-Yang
cylinder, so it is denoted $B$.  In \cite[Lemma 3.2]{blr2}, the authors proved that
$\WW^s_\loc(B)$ is a $C^\infty$ manifold.  We prove:

\begin{theorem2}
\label{THM2} 
The stable manifold $\WW^s_\loc(B)$ is not \ra~at any point.
\end{theorem2}

The proof of this theorem divides into four main parts.  First we construct a co-dimension 1 B\"ottcher function $\varphi$ defined in a neighborhood of $B$ under the assumption that $\WW^s_\loc(B)$ is real analytic. Next we extend the domain of $\varphi$ to a neighborhood of the set obtained from $\myline$ by removing the two superattracting fixed points. After that, we develop local properties of $R$ near one of these superattracting fixed points.  Lastly, we
examine the behavior of $\varphi$ and $R$ in the extension, from which we derive a contradiction.

This theorem is of physical interest, since $\WW^s_\loc(B)$ is related to phase transitions of the Ising model on the DHL at low temperatures; see \cite{blr1,blr2}.  In \S \ref{SEC:NON_ANALYTIC_LY}, we'll explain how Theorem B relates to the limiting distribution of Lee-Yang and Lee-Yang-Fisher zeros at low temperatures.

To summarize, the organization of the paper is as follows.  Section \ref{SEC:PROOF} is devoted to the proof of Theorem A', and Section \ref{SEC:EXAMPLES} describes several examples in which Theorem A can be applied.  The description of examples for which $\stable$ is not real analytic and proof of Theorem B then follow in Section \ref{SEC:NON_ANALYTIC}.  Lastly, a physical interpretation of Theorem B as related to the Ising model on the DHL at low temperatures is given in Section \ref{SEC:NON_ANALYTIC_LY}.

\vspace{0.2in} {\bf Acknowledgments.} We are very grateful to Turgay Bayraktar, Pavel Bleher,
Laura DeMarco, Jeffrey Diller, John Hubbard, Sarah Koch, Mikhail Lyubich,
Rodrigo Perez, and Amie Wilkinson for very helpful discussions.  Hubbard taught us the basic
technique that we used in the proof of Theorem A.  
Discussions with Bleher and Lyubich about the Migdal-Kadanoff renormalization motivated us for this project and helped us to realize that such a stable manifold could potentially not be real analytic.  Finally, we thank the referee for his or her very helpful comments and suggestions.

The work of Kaschner was supported partially by a Department of Education GAANN
grant 42-945-00 and partially by NSF GK-12 grant DGE-0742475. The work of
Roeder was supported partially by start-up funds from the IUPUI Department of
Mathematical Sciences and partially by NSF grant DMS-1102597.

\section{Proof of Theorem A'.}\label{SEC:PROOF}

The manifold $\mybundle$ can be described by two systems of
locally trivializing coordinates $(\first,\bsecond) \in \C \times
\mathbb{C}^{n-1}$ and $(\firstinfinity,\bsecondinfinity) \in \C \times
\mathbb{C}^{n-1}$.  For $z\neq 0$, they are related by $\firstinfinity = 1/\first$ and $\bsecondinfinity =
A_z \bsecond$, with $A_z:\mathbb{\C}^{n-1} \rightarrow  \mathbb{C}^{n-1}$ a linear isomorphism
depending holomorphically on $z$.
Let us choose these trivializations so that the dynamics on the zero section is $z \mapsto z^b$.

We will make use of standard multi-index notation.  Given $ {\bm c} \in
\mathbb{Z}_{+}^{n-1}$ and $\bsecond \in \C^{n-1}$, $\bsecond^{\bm c} =
\secondscalar_1^{c_1} \secondscalar_2^{c_2}\cdots
\secondscalar_{n-1}^{c_{n-1}}$ and $|\bm c| = c_1+\cdots+c_{n-1}$.
We will always use the standard Hermitian norm $|\bsecond| = (|\secondscalar_1|^2+\cdots+|\secondscalar_{n-1}|^2)^{1/2}$ on $\C^{n-1}$.

We have assumed $\myline$ is transversally superattracting of degree $a$.
Specifically, this means that if $\chi$ is any holomorphic function at some
point $\eta\in\myline$, vanishing along $\myline$, then for any point
$\xi\in\myline$ with $f(\xi) = \eta$, the holomorphic function $\chi\circ f$ at
$\xi$ vanishes to order at least $a$ along $\myline$.

\begin{lemma}\label{fns} There are holomorphic functions ${\bm g}_1$ and ${\bm g}_{\bm c}$ for each $|c| = a$ such that in the $(\first,\bsecond)$ coordinates
\[f(\first,\bsecond)\ =\ \left(\first^b+\bsecond \cdot {\bm g}_1(\first,\bsecond), \sum_{|\bm c| = a} \bsecond^{\bm c} {\bm g}_{\bm c}(\first,\bsecond)\right).\]
Similarly, there are holomorphic functions ${\bm h}_1$ and ${\bm h}_{\bm c}$ for each $|c| = a$ such that in the $(\firstinfinity,\bsecondinfinity)$ coordinates
\[f(\firstinfinity,\bsecondinfinity)\ =\ \left(\firstinfinity^b+\bsecondinfinity \cdot {\bm h}_1(\firstinfinity,\bsecondinfinity), \sum_{|\bm c| = a} \bsecondinfinity^{\bm c} {\bm h}_{\bm c}(\firstinfinity,\bsecondinfinity)\right).\]
\end{lemma}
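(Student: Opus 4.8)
The plan is to unwind the defining properties of the coordinate systems and the transversal superattracting hypothesis, using Taylor expansion together with the division algorithm for holomorphic functions vanishing on a smooth submanifold. First I would treat the second component. In the $(\first,\bsecond)$ coordinates the line $\myline$ is the zero section $\{\bsecond = \B0\}$, which is invariant; hence if $f = (f_1, \bm f_2)$ then $\bm f_2(\first,\B0) \equiv \B0$, so each component of $\bm f_2$ vanishes along $\myline$. Now I apply the transversal superattracting hypothesis: taking $\chi$ to be a coordinate function $\secondscalar_i$ (which vanishes along $\myline$) and pulling back by $f$, the function $\secondscalar_i \circ f = (\bm f_2)_i$ must vanish to order at least $a$ along $\myline = \{\bsecond = \B0\}$. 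Vanishing to order $\ge a$ along the smooth submanifold $\{\bsecond = \B0\}$ means that in the Taylor expansion in the $\bsecond$ variables (with holomorphic coefficients in $\first$, and more generally holomorphic in both) every monomial $\bsecond^{\bm c}$ with $|\bm c| < a$ has zero coefficient. Collecting the remaining terms by lowest order and repeatedly factoring, one writes $(\bm f_2)_i = \sum_{|\bm c| = a} \bsecond^{\bm c} (\bm g_{\bm c})_i(\first,\bsecond)$ with the $(\bm g_{\bm c})_i$ holomorphic; packaging the $i$ together gives the vector-valued $\bm g_{\bm c}$. (Concretely: group the Taylor series of $(\bm f_2)_i$ as $\sum_{|\bm c| \ge a} a_{\bm c}(\first)\,\bsecond^{\bm c}$, and for each such $\bm c$ pick an index $j$ with $c_j \ge 1$ and absorb one factor of $\secondscalar_j$, after which the remaining multi-indices all have modulus $\ge a-1$; iterating $a$ times, or more cleanly writing $\bsecond^{\bm c} = \bsecond^{\bm c'}\bsecond^{\bm c''}$ with $|\bm c'| = a$, produces the stated form, with convergence of the rearranged series on the same polydisk.)

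Next I would handle the first component $f_1$. Since $f|\myline$ is $z \mapsto z^b$ in these trivializations, $f_1(\first, \B0) = \first^b$. Therefore $f_1(\first,\bsecond) - \first^b$ is a holomorphic function vanishing on $\{\bsecond = \B0\}$, hence (again by the division algorithm, now only needing vanishing to order $1$) it can be written as $\bsecond \cdot \bm g_1(\first,\bsecond) = \sum_{i=1}^{n-1} \secondscalar_i\, (\bm g_1)_i(\first,\bsecond)$ for holomorphic $(\bm g_1)_i$. This gives exactly $f(\first,\bsecond) = \bigl(\first^b + \bsecond\cdot\bm g_1(\first,\bsecond),\ \sum_{|\bm c|=a}\bsecond^{\bm c}\bm g_{\bm c}(\first,\bsecond)\bigr)$.

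Finally, the statement in the $(\firstinfinity,\bsecondinfinity)$ coordinates follows by the identical argument, once one checks that the change of trivialization $\firstinfinity = 1/\first$, $\bsecondinfinity = A_\first \bsecond$ preserves the two relevant features: the zero section is still $\{\bsecondinfinity = \B0\}$, it is still invariant, the restriction is still $\firstinfinity \mapsto \firstinfinity^b$ (this was arranged when the trivializations were chosen), and transversal superattracting of degree $a$ is a coordinate-free property of $f$ near $\myline$, so it transfers verbatim. One small point worth a sentence: $A_\first$ is a linear isomorphism depending holomorphically on $\first$ for $\first \ne 0$, so conjugating by it preserves the ``vanishing to order $\ge a$ in the transverse variables'' condition away from $\first = 0$, and then one works near $\firstinfinity = 0$ (i.e. $\first = \infty$) where the $(\firstinfinity,\bsecondinfinity)$ chart is the relevant one, so no issue arises. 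The main (very mild) obstacle is purely bookkeeping: making the factorization $\sum_{|\bm c|\ge a} a_{\bm c}\bsecond^{\bm c} = \sum_{|\bm c| = a}\bsecond^{\bm c}\bm g_{\bm c}$ precise while keeping the $\bm g_{\bm c}$ holomorphic on a common neighborhood and not worrying about uniqueness of the $\bm g_{\bm c}$ (they are not unique, which is fine). I expect no substantive difficulty.
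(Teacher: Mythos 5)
Your proposal is correct and follows essentially the same route as the paper's (much terser) proof: write the first coordinate minus $\first^b$ as a function vanishing on $\{\bsecond=\bm 0\}$ and factor out $\bsecond$, and use the transversal superattracting hypothesis applied to the coordinate functions $\secondscalar_i$ to see that the second coordinate vanishes to order at least $a$ along $\myline$, hence factors through the degree-$a$ monomials $\bsecond^{\bm c}$. Your added bookkeeping (Taylor rearrangement, coordinate-independence of the degree-$a$ condition in the second chart) just fills in details the paper leaves implicit.
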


\begin{proof}
The proof is the same in both coordinate systems, so we'll work in the $(\first,\bsecond)$ system.  Since $f\mid\myline$ is the map $\first\mapsto \first^b$, the first coordinate of $f$ minus $\first^b$ vanishes on $\myline$.  Since $\myline$ is given by $\bsecond = \bm 0$, we have that the first coordinate of $f$ is $\first^b+\bsecond \cdot {\bm g}_1(\first,\bsecond)$ for some holomorphic function ${\bm g}_1$.  Meanwhile, the expression for the second coordinate follows from the fact that $\myline$ is transversally superattracting of degree $a$.
\end{proof}

\subsection{Hyperbolic theory.}

We'll now verify that the local stable manifold $\WW^s_\loc(\mycirc)$ is a $2n-1$ real-dimensional
topological manifold that is foliated by local stable manifolds of each point of $\mycirc$.

The hyperbolic theory for endomorphisms is somewhat less standard than for
diffeomorphisms.  Suitable references from the context of complex dynamics
include \cite{BEDFORD_JONSSON,FS_hyperbolic,jonsson}.  For consistency, we will use definitions and results from \cite[Appendix B]{BEDFORD_JONSSON}.
Let us consider the natural extension
\begin{eqnarray*}
\hat{\mycirc} &:=& \{(x_i)_{i \leq 0} \, : \, x_i \in \mycirc \mbox{ and } f(x_i) = x_{i+1}\}.
\end{eqnarray*}
We'll denote such histories by $\hat x = (x_i)_{i \leq 0} \in \hat{\mycirc}$.
Notice that the action of $f$ naturally lifts to an action $\hat f: \hat{\mycirc} \rightarrow \hat{\mycirc}$.


\begin{lemma}\label{hyperbolic} $\mycirc$ is a hyperbolic set for the map $f$.
\end{lemma}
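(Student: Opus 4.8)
The plan is to verify directly that $\mycirc$ satisfies the definition of a hyperbolic set for the endomorphism $f$ in the sense of \cite[Appendix B]{BEDFORD_JONSSON}, working with the natural extension $\hat{\mycirc}$ introduced above. The starting point is the structure of $Df$ along $\myline$ coming from Lemma~\ref{fns}: at a point $(\first,\bm 0) \in \myline$ with $|\first| = 1$ (so $(\first,\bm 0) \in \mycirc$), differentiating the formula $f(\first,\bsecond) = (\first^b + \bsecond\cdot{\bm g}_1, \sum_{|\bm c|=a}\bsecond^{\bm c}{\bm g}_{\bm c})$ shows that $Df$ is block upper-triangular with respect to the splitting $T(\C\times\C^{n-1}) = \C \oplus \C^{n-1}$: the tangential block is multiplication by $b\first^{b-1}$, and since $a \geq 2$ every term $\bsecond^{\bm c}{\bm g}_{\bm c}$ with $|\bm c| = a$ has vanishing first-order part, so the transverse block is $\bm 0$ and the off-diagonal block is irrelevant. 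Hence along $\mycirc$ the derivative $Df$ splits as $E^u \oplus E^s$ with $E^u = T\myline$ (where $\|Df|_{E^u}\| = b \geq 2$) and $E^s$ the fiber direction (where $Df|_{E^s} = 0$, the strongest possible contraction).

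Next I would lift this splitting to the natural extension. Over a history $\hat x = (x_i)_{i\leq 0} \in \hat{\mycirc}$, define $E^s_{\hat x}$ to be the fiber (vertical) subspace at $x_0$ and $E^u_{\hat x}$ to be the tangent line $T_{x_0}\myline$; this is well-defined because both $\myline$ and its fibration are $f$-invariant. Invariance of the splitting under the lifted action is immediate from the block-triangular form: $Df(x_0)$ maps the vertical space at $x_0$ to the vertical space at $f(x_0) = x_1$, and maps $T_{x_0}\myline$ isomorphically onto $T_{x_1}\myline$. For the cone/rate estimates one checks: for $v \in E^s_{\hat x}$, $\|Df^n(x_0)v\| = 0$ for $n\geq 1$ because the transverse block vanishes identically along $\myline$, so certainly $\|Df^n v\|\leq C\lambda^n\|v\|$ with $\lambda = 1/2$, say; and for the unstable direction, since $f|\myline$ is $\first\mapsto\first^b$ restricted to the unit circle, the inverse branches along a history contract by exactly $|b\first^{b-1}|^{-1} = b^{-1} \leq 1/2$ at each step, giving $\|Df^{-n}|_{E^u}\| \leq C b^{-n}$ along histories. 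This verifies the definition.

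The one genuine subtlety — and the step I expect to be the main obstacle — is that $\mycirc$ is \emph{not} normally hyperbolic, as the introduction already emphasizes: the tangential and transverse expansion rates of $Df$ on $\myline$ coincide ($f|\myline$ is conformal), and more importantly a hyperbolic \emph{set} (as opposed to a normally hyperbolic submanifold) for an endomorphism requires the $Df$-invariant splitting to exist only on the natural extension, not pointwise on $\mycirc$ itself. So I would be careful to phrase everything in terms of $\hat{\mycirc}$ and $\hat f$, and to check that the definition in \cite[Appendix B]{BEDFORD_JONSSON} asks only for (i) a $Df$-invariant continuous splitting over $\hat{\mycirc}$, (ii) uniform exponential contraction on $E^s$ forward in time, and (iii) uniform exponential contraction on $E^u$ backward along histories. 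All three hold here for the trivial reason that the transverse derivative along $\myline$ vanishes to order $a - 1 \geq 1$, which makes $E^s$ a \emph{super}stable direction; continuity of $\hat x \mapsto (E^s_{\hat x}, E^u_{\hat x})$ is clear since these subspaces depend only on $x_0$ and vary holomorphically with $x_0 \in \myline$. Once this bookkeeping is done, the standard stable manifold theorem for hyperbolic sets of holomorphic endomorphisms (again \cite[Appendix B]{BEDFORD_JONSSON}) gives the $C^1$ local stable manifold structure asserted in the surrounding text.
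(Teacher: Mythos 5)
Your overall strategy---reading off hyperbolicity from the triangular form of $Df$ along $\myline$, with the unstable estimate phrased on the natural extension $\hat\mycirc$---is the same as the paper's, but your choice of stable subbundle is wrong, and the problem lies precisely in the off-diagonal block you dismiss as ``irrelevant.'' From Lemma \ref{fns}, at a point $x=(\first,\bm 0)\in\mycirc$ one has
\[
Df_x\ =\ \left[\begin{array}{cc} b\first^{b-1} & {\bm g}_1(\first,\bm 0)\\ \bm 0 & \bm 0\end{array}\right],
\]
where the upper-right entry is the $\bsecond$-derivative of the first coordinate $\first^b+\bsecond\cdot{\bm g}_1$ at $\bsecond=\bm 0$, and it is in general nonzero. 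Consequently the vertical fiber $\{0\}\oplus\C^{n-1}$ is \emph{not} $Df$-invariant: a vertical vector $(0,v)$ is sent to the horizontal vector $\left({\bm g}_1(\first,\bm 0)\cdot v,\,\bm 0\right)$, which under further iteration is multiplied by $b\first^{b-1}$ and therefore grows like $b^{\,n-1}\,|{\bm g}_1(\first,\bm 0)\cdot v|$. So both of your key claims for the stable direction---that $Df$ maps the vertical space at $x_0$ to the vertical space at $x_1$, and that $\|Df^n v\|=0$ for vertical $v$---fail unless ${\bm g}_1$ happens to vanish along $\mycirc$, which is not among the hypotheses. Block upper-triangularity makes the \emph{horizontal} subspace $T\myline$ invariant, not the vertical one, and your contraction estimate on $E^s$ collapses with it.

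The repair is small and is exactly what the paper does: take $E^s(x)=\ker(Df_x)$, which is an $(n-1)$-dimensional subspace transverse to $T_x\myline$ because $b\first^{b-1}\neq 0$ on $|\first|=1$ (it is the graph over the fiber direction of $v\mapsto -\left({\bm g}_1(\first,\bm 0)\cdot v\right)/\left(b\first^{b-1}\right)$), and keep $E^u(\hat x)=T_{x_0}\myline$. Invariance and the forward contraction on $E^s$ are then trivial, since $Df_x$ annihilates its kernel outright, and your unstable estimate along histories (contraction by $b^{-1}$ per backward step) is correct as written; continuity of the splitting is clear since both subspaces depend only on $x_0$ and vary analytically with it. With that substitution your argument becomes essentially the paper's proof.
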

\begin{proof}
Note that for $x\in\mycirc$, we have
\[Df_x\ =\ \left[\begin{array}{cc}b\first^{b-1}
&\displaystyle\frac{\partial}{\partial \bsecond}{\bm g}_1(\first,\bm0)\\\bm0&\bm0\end{array}\right].\]
Thus, we have $E^s(x)=\ker(Df)$ and $E^u(\hat x)\subset\myline$, so $T_x\mathbb
C^n=E^s(x)\oplus E^u(\hat x)$.  Invariance of $E^s(x)$ follows from the
fact any point in the kernel is collapsed to $(0,\bm 0)$ under $Df$, and
invariance of $E^u(\hat x)$ follows from the invariance of $\myline$.
Also, for any $v^s\in E^s(x)$ and $v^u\in
E^u(\hat x)$ with $n\geq 0$,
\[\begin{array}{l}
\|Df_x^nv^s\|=0\leq C\lambda^n\|v^s\|\mbox{ and }
\|Df_x^{n}v^u\|\leq C\lambda^{-n}\|v^u\|,
\end{array}\]
for $C =1$ and $\lambda = 1/2$.
Thus, we have that $\mycirc$ is hyperbolic.
\end{proof}

Therefore, by the stable manifold theorem (see, for example, \cite[Theorem 
5.2]{pugh1989ergodic}) each point $x \in \mycirc$
will have local stable manifold $\WW^s_{\rm loc}(x) $ that is a
complex $n-1$ ball holomorphically embedded into $\mybundle$ and each
history $\hat x$ will have a local unstable manifold $\WW^u_{\rm loc}(\hat
x)$, which is a holomorphic disc.  They depend continuously on $x$ and $\hat x$.
(In this case, the unstable manifolds all lie in $\myline$.)

\begin{remark}
Existence of such stable laminations has also been proved in the holomorphic
context by Ushiki \cite{USHIKI_SADDLE}.  It can be proved in the following
simple way as well, which is a direct generalization of what was done in
\cite[Proposition 4.2]{roeder2005} and \cite[Proposition 9.2]{blr1}.

By the stable manifold theorem for a point (see, for example, \cite[\S
2.6]{palis1982} or \cite{USHIKI_BOTT}, which hold even if $Df$ has an
eigenvalue of $0$), there exists a local stable manifold, $\WW^s_\loc((1,\bm
0))$, which is the graph of a holomorphic function $z = \eta_1(\bsecond)$
defined on some $(n-1)$-dimensional open ball, $\ball$, in the $\bsecond$ axis.
Let $\denseset\subset\mycirc$ to be the set of iterated preimages of $(1,\bm
0)$.  Using a suitable invariant cone field and a well-chosen neighborhood of
$\mycirc$, one can take iterated preimages of $\WW^s_\loc((1,\bm 0))$ so that
the preimage through each $x \in \denseset$ is expressed as the graph of a
holomorphic function $\eta_x(\bsecond)$ defined on $\ball$, making $\ball$
smaller if necessary.  In this way, we can construct local stable manifolds
over $\denseset$, which is dense in $\mycirc$.  The function
$\eta\colon\ball\times\denseset\rightarrow\mathbb C$ given by
$\eta(\bsecond,x)=\eta_x(\bsecond)$ defines a holomorphic motion of
$\denseset\subset\mathbb C$, parameterized by $\bsecond\in\ball\subset\mathbb
C^{n-1}$.  We may use the $\lambda$-lemma \cite{MSS,Lyubich} to extend $\eta$
continuously to a holomorphic motion of $\overline{\denseset}=\mycirc$,
obtaining stable manifolds for every point of $\mycirc$.
\end{remark}

\begin{definition}
A hyperbolic set $\hat\Lambda$ has a local product structure, if $\delta>0$ can be chosen small enough so that for any $p\in\Lambda$ and $\hat q\in\hat\Lambda$, either $\WW^s_{\delta}(p)\cap\WW^u_{\delta}(\hat q)$ is empty or it is a single point $x\in\Lambda$ so the unique history $\hat x$ of $x$ satisfying $x_j\in\WW^u_{\delta}(\hat f^j(\hat q))$ for all $j\leq0$ is completely contained in $\hat\Lambda$.
\end{definition}

\begin{lemma}\label{m} $\mycirc$ has local product structure for the map $f$.
\end{lemma}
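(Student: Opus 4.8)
The plan is to deduce the local product structure directly from the geometry of the local invariant manifolds relative to $\myline$, using only what has already been set up: the local unstable manifolds $\WW^u_{\loc}(\hat r)$ all lie in $\myline$, each local stable manifold $\WW^s_{\loc}(p)$ is a complex $(n-1)$-ball holomorphically embedded transversally to $\myline$, and $\myline$, $\mycirc$, and the local invariant manifolds depend continuously on the base point. First I would fix $\delta>0$ small enough, uniformly in $p\in\mycirc$ and $\hat r\in\hat{\mycirc}$ (legitimate by compactness of $\hat{\mycirc}$), so that: (i) for each $p$, the ball $\WW^s_\delta(p)$ is contained in $\WW^s_{\loc}(p)$, which in a suitable trivializing chart $(\first,\bsecond)$ is the graph $\first=\eta_p(\bsecond)$ over a ball in the $\bsecond$-axis with $\myline=\{\bsecond=\B0\}$; (ii) each $\WW^u_\delta(\hat r)$ is a small disc inside $\myline$ around a point of $\mycirc$, in particular avoiding $\first=0$ and $\first=\infty$; and (iii) the standard structure theory of local unstable manifolds for the endomorphism $f$ applies along $\mycirc$, so that every point of $\WW^u_\delta(\hat r)$ has a unique backward history whose $j$-th term lies in $\WW^u_\delta(\hat f^{j}\hat r)$ for all $j\le0$.

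With these choices the proof splits into two short steps. The first is to check that $\WW^s_\delta(p)\cap\WW^u_\delta(\hat q)$ is empty or equal to $\{p\}$, with $p\in\mycirc$: in the chart of (i), $\WW^s_{\loc}(p)$ meets $\myline=\{\bsecond=\B0\}$ only at $\bsecond=\B0$, i.e.\ only at $p=(\eta_p(\B0),\B0)$, so $\WW^s_\delta(p)\cap\myline=\{p\}$; since $\WW^u_\delta(\hat q)\subseteq\myline$ by (ii), this forces $\WW^s_\delta(p)\cap\WW^u_\delta(\hat q)\subseteq\{p\}$, hence the intersection is empty if $p\notin\WW^u_\delta(\hat q)$ and is exactly $\{p\}$ if $p\in\WW^u_\delta(\hat q)$ (as $p\in\WW^s_\delta(p)$ trivially). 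In the latter case the intersection point $p$ does lie in $\mycirc$, as required.

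The second step is to check that, when $p\in\WW^u_\delta(\hat q)$, the unique history $\hat p=(p_j)_{j\le0}$ of $p$ with $p_j\in\WW^u_\delta(\hat f^{j}\hat q)$ for all $j\le0$ — which exists and is unique by (iii) — is entirely contained in $\hat{\mycirc}$. By (ii) each $p_j$ lies in $\myline$, where $f$ acts by $\first\mapsto\first^{b}$; reading $\first$-coordinates gives $|p_j|=|p_{j-1}|^{b}$ for all $j\le0$. Since $p_0=p\in\mycirc=\{|\first|=1\}$, a backward induction yields $|p_j|=1$, i.e.\ $p_j\in\mycirc$, for every $j\le0$, so $\hat p\in\hat{\mycirc}$. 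Together with the first step this is exactly the defining property of local product structure.

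There is really no hard estimate here; the substantive point is that every object in play — the intersection, the relevant unstable discs, and the entire backward history of the intersection point — lives inside the invariant line $\myline$, on which $f$ is the modulus-respecting map $\first\mapsto\first^{b}$ and $\mycirc$ is the set $\{|\first|=1\}$. The only place that needs care is the uniform choice of $\delta$ in (iii), where one should invoke the standard description of local unstable manifolds for holomorphic endomorphisms (from the references cited for the hyperbolic theory) together with compactness of $\hat{\mycirc}$. If one prefers not to quote uniqueness of the backward history, one can instead note that $f|\myline$ restricts to a biholomorphism on each disc $\WW^u_\delta(\hat r)$ meeting $\mycirc$ — since such discs avoid $\first=0,\infty$ — so that the backward orbit inside these discs is uniquely determined, and the modulus computation of the second step then goes through verbatim.
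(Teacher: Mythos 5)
Your proof is correct and follows essentially the same route as the paper: the unstable discs lie in $\myline$, each $\WW^s_\delta(p)$ meets $\myline$ only at $p$ (by transversality/graph representation), so the intersection is empty or $\{p\}$ with $p\in\mycirc$, and the backward history stays in $\mycirc$ because $f|\myline$ is $\first\mapsto\first^b$. In fact you supply slightly more detail than the paper (the modulus induction for the history and the uniqueness of the backward orbit within the unstable discs), which the paper merely asserts.
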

\begin{proof}
By Lemma \ref{hyperbolic}, $S$ is hyperbolic.  Recall that for any $\hat q\in\hat\mycirc$, we have that $\WW^u_{\delta}(\hat q)=\mathbb D_{\delta}(q_0)\subset \myline$, which is the disc of radius $\delta>0$ centered at the point $q$ contained in $\myline$.  Since $\WW^u_{\delta}(\hat q)$ depends only on $q_0$, existence of a local product structure for $\hat\mycirc$ is very simple.

By the Stable Manifold Theorem, we may choose $\delta>0$ small enough so that for any $p\in S$, we have $\WW_{\delta}^s(p)\cap L=\{p\}$.  Thus, for any two points $p,q\in S$, the intersection $\WW^s_{\delta}(p)\cap\WW^u_{\delta}(\hat q)=\{p\}$, with $p\in S$.  Moreover, $p$ has a unique history $\hat p=(p_i)_{i\leq0}$ with $p_j\in\WW^u_{\delta}(\hat f^j(\hat q))$ for all $j\leq0$, and it is completely contained in $\hat S$ as well.
\end{proof}

Given a neighborhood $\Omega$ of $\mycirc$, let
\begin{equation}\label{s}
\WW^s_\loc(\mycirc) := \{x \in \mybundle \colon f^n x\in \Omega\mbox{ and } f^n x\rightarrow\mycirc\mbox{ as }n\rightarrow\infty\}
\end{equation}
(where $\Omega$ is implicit in the notation, and an assertion involving
$\WW^s_\loc(\mycirc)$ means that it holds for any sufficiently small
neighborhood of $\mycirc$). 

Since $\mycirc$ has a local product structure $\WW^s_\loc(\mycirc)$ is the
union of the local stable manifolds $\WW^s_{\loc}(x)$ of points $x \in
\invcirc$; see \cite[Proposition B.6]{BEDFORD_JONSSON}.  The local stable manifolds of points are pairwise disjoint and depend continuously on the base point, therefore we have:

\begin{corollary}\label{COR:MAN} $\WW^s_\loc(\mycirc)$ is a topological manifold of real dimension $2n-1$.
\end{corollary}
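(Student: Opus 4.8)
The plan is to produce an explicit homeomorphism $\Psi$ from $\mycirc\times\ball$ — a manifold of real dimension $1+2(n-1)=2n-1$ — onto $\WW^s_\loc(\mycirc)$. I would work throughout in the $(\first,\bsecond)$ trivialization, which covers a full neighborhood of $\mycirc$ since $\mycirc=\{|\first|=1\}$ stays away from $\first=0$ and $\first=\infty$. Applying the stable manifold theorem to the hyperbolic set $\mycirc$ (Lemma~\ref{hyperbolic}) and using compactness of $\mycirc$, one obtains a single ball $\ball$ in the $\bsecond$-axis over which, for every $x\in\mycirc$, the leaf $\WW^s_\loc(x)$ is the graph $\{(\eta_x(\bsecond),\bsecond):\bsecond\in\ball\}$ of a holomorphic function $\eta_x\colon\ball\to\C$ with $\eta_x(\bm 0)$ the $\first$-coordinate of $x$, and with $x\mapsto\eta_x$ continuous uniformly on $\ball$; this is exactly the holomorphic motion constructed in the Remark above. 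I would then set $\Psi(x,\bsecond)=(\eta_x(\bsecond),\bsecond)$, whose image, by the description of $\WW^s_\loc(\mycirc)$ as the union of the leaves recalled just above, is all of $\WW^s_\loc(\mycirc)$.

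Two of the three required properties of $\Psi$ are then essentially formal. Surjectivity onto $\WW^s_\loc(\mycirc)$ is the union description. For injectivity, if $\Psi(x,\bsecond)=\Psi(x',\bsecond')$ then comparing $\bsecond$-coordinates gives $\bsecond=\bsecond'$, and the common point lies in $\WW^s_\loc(x)\cap\WW^s_\loc(x')$, which by the local product structure (Lemma~\ref{m}), with $\delta$ chosen small so that distinct points of $\mycirc$ have disjoint local stable manifolds, forces $x=x'$. Continuity of $\Psi$ follows from continuity of $x\mapsto\eta_x$ together with holomorphy of each $\eta_x$. So $\Psi$ is a continuous bijection, and the dimension count will follow once $\Psi$ is shown to be a homeomorphism.

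The one point that needs genuine care — and which I expect to be the main obstacle — is that $\Psi$ is a \emph{homeomorphism}, i.e. that $\WW^s_\loc(\mycirc)$ near each of its points really looks like the product chart rather than something more degenerate. To see this I would fix $x_0\in\mycirc$, a closed arc $\gamma\ni x_0$, and a closed sub-ball $\overline B\subset\ball$ about $\bm 0$; then $\Psi$ restricted to the compact set $\gamma\times\overline B$ is a continuous injection into the Hausdorff space $\mybundle$, hence a homeomorphism onto its image. To conclude that $\Psi$ takes interiors to relative neighborhoods in $\WW^s_\loc(\mycirc)$, I would use that $\{\eta_x\}_{x\in\mycirc}$ is a uniformly bounded family of holomorphic functions, hence equicontinuous on $\ball$: a point $q=\Psi(x,\bsecond)$ close to $\Psi(x_0,\bm 0)$ must have $\bsecond$ close to $\bm 0$ and $\first$-coordinate $\eta_x(\bsecond)$ close to $\eta_x(\bm 0)=x$, so $x$ is close to $x_0$, whence $q\in\Psi(\mathring\gamma\times\mathring B)$. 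Thus $\Psi$ is a local homeomorphism at each point and, being bijective, a homeomorphism; transporting the manifold structure of $\mycirc\times\ball$ through $\Psi$ gives the claim. Everything beyond pinning down ``continuous dependence of $\WW^s_\loc(x)$ on $x$'' in a strong enough form, and this last local-homeomorphism check, is routine bookkeeping.
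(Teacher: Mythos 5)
Your argument is correct and is essentially the paper's own: the paper justifies the corollary in one line by noting that $\WW^s_\loc(\mycirc)$ is the union of the pairwise disjoint local stable manifolds of points of $\mycirc$, depending continuously on the base point, and your proposal simply carries out the routine chart-building that this assertion encodes (the graph description over a uniform ball $\ball$ is exactly the holomorphic-motion construction in the paper's Remark). The only cosmetic remark is that your local-homeomorphism check is written at points of $\mycirc$ itself ($\bsecond=\bm 0$); the same compactness-plus-equicontinuity mechanism handles an arbitrary base point $(x_0,\bsecond_0)$, so this is a routine extension rather than a gap.
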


Note that up to this point, we have not made use of the assumption that $a\geq b$.

\subsection{Co-dimension 1 B\"ottcher function.}\label{SEC:BOTTCHER}

Let $(\first_n,\bsecond_n) := f^n(\first,\bsecond)$. 
Motivated by B\"ottcher's theorem \cite{BOTTCHER},\cite[p. 86]{milnor}, we consider a sequence of functions
\begin{eqnarray*}
\varphi_n(\first,\bsecond) = \first_n^{1/b^n}.
\end{eqnarray*}
We will show that the $\varphi_n$ converge uniformly on compact subsets of some forward invariant neighborhood $\Omega$ of $\mycirc$ to a holomorphic function $\varphi$ that semi-conjugates $f$ to $z \mapsto z^b$: 
\begin{eqnarray}\label{BOTTCHER_INVARIANCE}
\varphi(f(\first,\bsecond)) = \varphi(\first,\bsecond)^b.
\end{eqnarray}

To make sense of the $b^n$-th roots and the limit, we'll rewrite each  $\varphi_n$ as telescoping product:
\begin{equation}\label{INFINITE_PRODUCT}
\varphi\ =\ \lim_{n\rightarrow\infty}\varphi_n\ =\ {\first_0}\cdot\frac{{\first_1}^{1/b}}{{\first_0}}\cdot\frac{{\first_2}^{1/b^2}}{{\first_1}^{1/b}}\cdot\frac{{\first_3}^{1/b^3}}{{\first_2}^{1/b^2}}\cdots\ =\ {\first_0}\prod_{n=0}^{\infty}\left(\frac{{\first_{n+1}}}{{\first_n}^{b}}\right)^{\frac{1}{b^{n+1}}},
\end{equation}
where it follows from Lemma \ref{fns} that
\begin{eqnarray}\label{QUOTIENT1}
\frac{{\first_{n+1}}}{{\first_n}^{b}} = \frac{{\first_n^b+\bsecond_n \cdot {\bm g}_1(\first_n,\bsecond_n)}}{{\first_n}^{b}} = 1+\frac{\bsecond_n}{{\first_n}^{b}}\cdot {\bm g}_1(\first_n,\bsecond_n).
\end{eqnarray}
In the $(\firstinfinity,\bsecondinfinity)$ coordinates we have:
\begin{eqnarray}\label{QUOTIENT2}
\frac{{\first_{n+1}}}{{\first_n}^{b}} = \frac{\firstinfinity_n^b}{\firstinfinity_{n+1}} = \frac{1}{1+\frac{\bsecondinfinity_n}{\firstinfinity_n^b}\cdot {\bm h}_1(\firstinfinity_n,\bsecondinfinity_n)}.
\end{eqnarray}
When working in $\WW^s(\fixone)$ we'll use expression (\ref{QUOTIENT1}), when
working in $\WW^s(\fixtwo)$ we'll use expression (\ref{QUOTIENT2}), and when
working on $\WW^s_\loc(\mycirc)$, we'll use either.

We'll construct a forward invariant neighborhood $\Omega$ of $\mycirc$ so that
if $(\first,\bsecond) \in \Omega \cap (\WW^s(\fixone) \cup \WW^s_\loc(\mycirc))$, then 
\begin{eqnarray}
\left|\frac{\bsecond_n}{{\first_n}^{b}}\cdot {\bm g}_1(\first_n,\bsecond_n)\right| &<& \frac{1}{2} \label{NEED1}, 
\end{eqnarray}
and if $(\firstinfinity,\bsecondinfinity) \in \Omega \cap (\WW^s(\fixtwo) \cup \WW^s_\loc(\mycirc))$, then 
\begin{eqnarray}
\left|\frac{\bsecondinfinity_n}{\firstinfinity_n^b}\cdot {\bm h}_1(\firstinfinity_n,\bsecondinfinity_n)\right| &<& \frac{1}{2}. \label{NEED2}
\end{eqnarray}
Then, 
for points in $\Omega$, the $b^n$-th root is defined by taking the branch cut along the negative real axis.  Moreover, this condition will also imply convergence of the infinite product (\ref{INFINITE_PRODUCT}) on $\Omega$, since the corresponding sum of logarithms converges:
\begin{equation*}
\sum_{n=1}^{\infty}\log\left|\frac{{\first_{n+1}}}{{\first_n}^{b}}\right|^{\frac{1}{b^{n+1}}} \leq \sum_{n=1}^{\infty} \frac{1}{b^{n+1}} \log 2. 
\end{equation*}

To construct $\Omega$, first note that for any $\const_1 > 0$ sufficiently
small, $\{|\bsecond| \leq \const_1 \} \cap \left(\WW^s(\fixone) \cup
\WW^s_\loc(\mycirc)\right)$ is a compact subset of $\C^n$.  Since ${\bm g}_1$
is holomorphic on $\C^n$, there is a bound $|{\bm g}_1(\first,\bsecond)| \leq
\const_2$ on any such compact set.  A similar bound holds in the other
coordinate system.  Therefore, it suffices to show:

\begin{lemma}\label{LEM:GOOD_NBHD}
Given any $\const > 0$, there exists a forward invariant neighborhood of $\mycirc$ in which
\begin{eqnarray}\label{EQN:DESIRED_REGION}
\frac{|\bsecond|}{|\first|^b} < K  \qquad \text{and} \qquad \frac{|\bsecondinfinity|}{|\firstinfinity|^b} < K.
\end{eqnarray}
\end{lemma}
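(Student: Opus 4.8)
The plan is to build the forward invariant neighborhood from the dynamics tangent to $\myline$, using the condition $a \geq b$ to control the transverse behavior. Working in the $(\first,\bsecond)$ coordinates, Lemma \ref{fns} gives $\first_{n+1} = \first_n^b + \bsecond_n\cdot{\bm g}_1$ and $\bsecond_{n+1} = \sum_{|\bm c|=a}\bsecond_n^{\bm c}{\bm g}_{\bm c}$, so on a set of the form $\{|\first - c| < \rho,\ |\bsecond| < r\}$ with $|c| = 1$ (a neighborhood of a point of $\mycirc$), choosing $\rho$ and $r$ small forces $|\bsecond_{n+1}| \leq \const' |\bsecond_n|^a$ and $|\first_{n+1}| $ close to $|\first_n|^b$, hence $|\first_n|$ stays bounded away from $0$. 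Since $a \geq b$, one gets $|\bsecond_n|^a \leq |\bsecond_n|^b$ for $|\bsecond_n| \leq 1$, so the ratio $|\bsecond_{n+1}|/|\first_{n+1}|^b$ is controlled by a fixed multiple of $|\bsecond_n|/|\first_n|^b$ times a factor that shrinks — this is exactly where the hypothesis is used and where the estimate $|\bsecond|/|\first|^b < K$ propagates forward.

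The key steps, in order: (1) Cover $\mycirc$ by finitely many coordinate charts of the two trivializing types; on each, record that $f$ has the Lemma \ref{fns} form and that $|\bsecond|/|\first|^b$ and $|\bsecondinfinity|/|\firstinfinity|^b$ agree up to bounded factors where both charts are valid, so it suffices to produce a neighborhood and verify the estimate in whichever chart is convenient near each point. (2) Fix the target $K$; by shrinking, assume $K < 1$. Choose a tube $\nbhd_0 = \{\dist(\cdot,\mycirc) < \epsilon\}$ small enough that on it $|\bsecond| < K$, that the relevant ${\bm g}$'s and ${\bm h}$'s are bounded, and that $|\first|$ (resp.\ $|\firstinfinity|$) is pinched near $1$. (3) Show $f(\nbhd_0 \cap \{|\bsecond|/|\first|^b < K\}) \subseteq \nbhd_0 \cap \{|\bsecond|/|\first|^b < K\}$: the first-coordinate estimate $\first_{n+1} = \first_n^b(1 + O(K))$ keeps $\first$ pinched near $1$ (so the point stays near $\mycirc$), and the second-coordinate estimate together with $a \geq b$ gives $|\bsecond_{n+1}|/|\first_{n+1}|^b \leq C(K,\epsilon)\cdot |\bsecond_n|/|\first_n|^b$ with $C < 1$ after shrinking $\epsilon$. (4) Set $\nbhd = \bigcap_{n \geq 0} f^{-n}(\nbhd_0 \cap \{|\bsecond|/|\first|^b < K\})$, or more simply take $\nbhd_0 \cap \{|\bsecond|/|\first|^b < K\}$ itself once (3) shows it is already forward invariant; intersect with the analogous set in the $(\firstinfinity,\bsecondinfinity)$ chart so that (\ref{EQN:DESIRED_REGION}) holds in both coordinate systems, using step (1) to reconcile the two near the ``equator''.

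The main obstacle is step (3): making the transverse contraction estimate uniform across the whole circle $\mycirc$ while simultaneously keeping the first coordinate from drifting — a naive choice of tube need not be forward invariant because the $z \mapsto z^b$ dynamics moves points around the circle, and one must ensure the radial pinching $|\first| \approx 1$ survives infinitely many iterations despite the perturbation $\bsecond\cdot{\bm g}_1$. The remedy is to work with the quantity $\bigl|\log|\first|\bigr|$, which satisfies $\bigl|\log|\first_{n+1}|\bigr| \leq b\,\bigl|\log|\first_n|\bigr| + O(K)$ is the wrong direction, so instead one uses that once $|\bsecond_n|/|\first_n|^b < K$ the multiplicative error $1 + \bsecond_n/\first_n^b\cdot{\bm g}_1$ lies in a fixed small disc around $1$, and bootstraps: the transverse coordinate decays super-exponentially (because $a \geq b \geq 2$, so $|\bsecond_n| \to 0$ like a double-exponential once started small), hence $\sum_n \bigl|\log|1 + \bsecond_n/\first_n^b\cdot{\bm g}_1|\bigr| < \infty$, which pins $|\first_n|$ to a limit near $1$ and closes the invariance. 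This is also precisely the estimate that will be reused to prove convergence of the product (\ref{INFINITE_PRODUCT}).
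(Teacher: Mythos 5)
Your construction does not produce a forward invariant set, and the gap is precisely at the point you flagged as the ``main obstacle.'' Any neighborhood of $\mycirc$ contains points of $\myline$ (and points nearby) with $|\first|\neq 1$, and since $f|\myline$ is $\first\mapsto\first^b$, such points drift monotonically toward the superattracting fixed points $\fixone$ or $\fixtwo$: already with $\bsecond=\bm 0$ one has $|\first_n|=|\first_0|^{b^n}\to 0$ or $\infty$, so they leave the tube $\{\dist(\cdot,\mycirc)<\epsilon\}$ after finitely many iterates. Your remedy --- that summability of $\log\bigl|1+\tfrac{\bsecond_n}{\first_n^b}\cdot{\bm g}_1\bigr|$ pins $|\first_n|$ near $1$ --- is incorrect: the recursion is $\log|\first_{n+1}|=b\log|\first_n|+O(\mathrm{err}_n)$, so $\log|\first_n|\approx b^n\log|\first_0|+\sum_k b^{n-k}\mathrm{err}_k$, which diverges unless $|\first_0|=1$ exactly; the summable-error argument controls only points already on $\WW^s_\loc(\mycirc)$, not a neighborhood. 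Consequently the set $\nbhd_0\cap\{|\bsecond|/|\first|^b<K\}$ in your step (3) is not forward invariant, and the alternative $\bigcap_{n\geq0}f^{-n}(\cdot)$ in step (4) is forward invariant but is no longer a neighborhood of $\mycirc$, since the points just described are excluded from it. Moreover, your contraction estimate for the ratio is only established where $|\first|$ is pinched near $1$, whereas the genuine difficulty (and the only place $a\geq b$ is really needed) is where orbits approach the fixed points and $|\first_n|^b$ itself decays double-exponentially, so the boundedness of $|\bsecond_n|/|\first_n|^b$ there is exactly what must be proved, not assumed away by staying in a tube.

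The paper's proof accepts that any forward invariant neighborhood of $\mycirc$ must reach into the two fixed points, and builds it as $\Omega=V(\epsilon_1,\epsilon_2)\cup U_1\cup U_2$, where $U_1=\{|\first|<\epsilon_1,\ |\bsecond|/|\first|^b<\delta_1\}$ and $U_2$ are ``bullet'' regions at $\fixone$ and $\fixtwo$, and $V$ is a forward invariant tube around $\myline$ with the fixed-point ends removed. Forward invariance of $U_1$ is obtained by performing $b$ point blow-ups at $\fixone$ and passing to the coordinates $(\first,\bm\lambda)$ with $\bm\lambda=\bsecond/\first^b$: there the lifted map sends $\bm\lambda$ to $\first^{\,b(a-b)}\Sigma_{|\bm c|=a}\bm\lambda^{\bm c}{\bm g}_{\bm c}/(1+\bm\lambda\cdot{\bm g}_1)^b$, which is holomorphic and superattracting at the origin precisely because $a\geq b$ makes the exponent $b(a-b)$ nonnegative; this is the step your outline is missing. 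The two pieces are then glued by the elementary estimate $|\first_1|\geq|\first|^b/2$ (valid once $|\bsecond|/|\first|^b<K$ and the tube is small), which guarantees that an orbit exiting $V(\epsilon_1,\epsilon_2)$ lands in $U_1\cup U_2$. Your chart-to-chart reconciliation and the use of Lemma \ref{fns} are fine as far as they go, but without the bullet regions and the fixed-point analysis the lemma is not proved.
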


\begin{proof}
We will take an inductive  sequence of $b$ point blow-ups at each of the two fixed points $\fixone$ and $\fixtwo$.  Using the forms of $f$ given by Lemma \ref{fns}, the calculation will be the same at each of these two points, so we'll focus on $\fixone$, which is given by $(\first,\bsecond) = (0,\bm 0)$.

We first do a point blow-up at $\fixone$, producing an exceptional divisor
$E_{\fixone,1}$.  Let $\tilde{\myline}_1$ be the proper transform of $\myline$.
We then blow-up the point intersection point between $E_{\fixone,1}$ and
$\tilde{\myline}_1$, producing a new exceptional divisor $E_{\fixone,2}$ and
proper transform $\tilde{\myline}_2$.  We inductively do this $b-2$ additional
times, each time blowing up the intersection point between the previous
exceptional divisor and proper transform of $L$.

Consider the system of coordinates $\first, {\bm \lambda} = \frac{\bm
\bsecond}{\first^b}$ centered at the intersection point of $E_{\fixone,b}$ with
$\tilde{L}_b$.  Let us denote $(\first',\bm \lambda') = \tilde{f}(\first,\bm
\lambda)$, where $\tilde{f}$ is the extension of $f$ to the final blow-up.  We
have
\begin{eqnarray*}
\first' &=& \first^b + \first^b {\bm \lambda} \cdot {\bm g}(\first,\first^b \bm \lambda) \\
\bm \lambda ' &=& \frac{\bsecond'}{(\first')^b} = \frac{\Sigma_{|\bm c| = a} (\first^b {\bm \lambda})^{\bm c} {\bm g}_{\bm c}(\first,\first^b {\bm \lambda})}{(\first^b + \first^b {\bm \lambda} \cdot {\bm g}(\first,\first^b \bm \lambda))^b} = \frac{z^{b(a-b)} \Sigma_{|\bm c| = a}  {\bm \lambda}^{\bm c} {\bm g}_{\bm c}(\first,\first^b {\bm \lambda})}{(1 + {\bm \lambda} \cdot {\bm g}(\first,\first^b \bm \lambda))^b}.
\end{eqnarray*}
Notice that this extension $\tilde{f}$ is holomorphic in a neighborhood of
$(\first,\bm \lambda) = (0,\bm 0)$ and that this point is superattracting for
$\tilde{f}$. 

Therefore, for any $\epsilon_1 > 0$ and $K \geq \delta_1 > 0$,  sufficiently small,
$\widetilde{U}_1 : = \{|\first| < \epsilon_1, |\bm \lambda| < \delta_1\}$ will be forward invariant under $\tilde{f}$.   Hence,
\begin{eqnarray*}
U_1(\epsilon_1,\delta_1) := \pi\left(\widetilde{U}_1(\epsilon_1,\delta_1)\right) = \left\{|\first| < \epsilon_1, \frac{|\bsecond|}{|\first|^b} < \delta_1\right\}
\end{eqnarray*}
will be a forward invariant set for $f$.

As stated before, the same calculation can be done at $\fixtwo$, with analogous
results.  
In particular, for any $\epsilon_2 > 0$ and $K \geq \delta_2 > 0$  sufficiently small
we will have a forward invariant set for $f$ of the form
\begin{eqnarray*}
U_2(\epsilon_2,\delta_2) = \left\{|\firstinfinity| < \epsilon_2, \frac{|\bsecondinfinity|}{|\firstinfinity|^b} < \delta_2\right\}.
\end{eqnarray*}

Let $V \subset \mybundle$ be a forward invariant tubular neighborhood of
$\myline$ and let 
\begin{eqnarray*}
V(\epsilon_1,\epsilon_2) = V \setminus \left(\{|\first| < \epsilon_1\}\cup \{|\firstinfinity| < \epsilon_2\}\right).
\end{eqnarray*}
Note that if $V$ sufficiently small, then all points of $V(\epsilon_1,\epsilon_2)$ satisfy (\ref{EQN:DESIRED_REGION}).
We will show that $V$ can be made even smaller, if necessary, in order to make
\begin{eqnarray*}
\Omega := V(\epsilon_1,\epsilon_2) \cup U_1(\epsilon_1,\delta_1) \cup U_2(\epsilon_2,\delta_2)
\end{eqnarray*}
forward invariant.

\begin{figure}
\begin{center}
\scalebox{0.83}{
\input{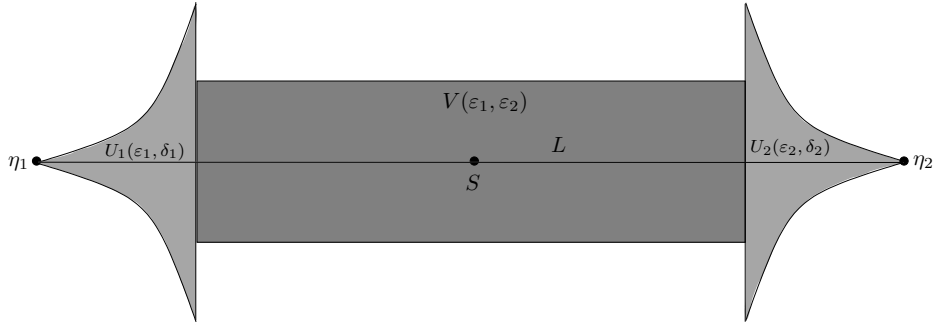}
}
\caption{The forward invariant neighborhood $\Omega$}
\end{center}
\end{figure}

\noindent
Since $U_1(\epsilon_1,\delta_1)$ and $U_2(\epsilon_2,\delta_2)$ are forward
invariant, we need only check that if $x \in V(\epsilon_1,\epsilon_2)$
and $f(x) \not \in V(\epsilon_1,\epsilon_2)$, then $f(x) \in
U_1(\epsilon_1,\delta_1) \cup U_2(\epsilon_2,\delta_2)$.  Let us focus on $x \in \WW^s(\fixone)$, since the proof will be the same
for $x \in \WW^s(\fixtwo)$.

Let $x = (\first,\bsecond) \in V(\epsilon_1,\epsilon_2) \cap \WW^s(\fixone)$ and
let $(\first_1,\bsecond_1) = f(\first,\bsecond)$.  Since $(\first,\bsecond)
\in V(\epsilon_1,\epsilon_2)$, $|\bsecond|/|\first|^b < K$, so that (\ref{NEED1})
and (\ref{QUOTIENT1}) imply that the $|\first_1| \geq |\first|^b / 2 \geq \epsilon_1^b/2$.
Thus, we need only choose the (forward invariant) tubular neighborhood $V$ sufficiently small so that
\begin{eqnarray*}
V \cap \left\{\frac{\epsilon_1^b}{2} \leq |\first| \leq \epsilon_1\right\} \subset U_1(\epsilon_1,\delta_1).
\end{eqnarray*}
Doing the same thing near $\fixtwo$, we construct a forward invariant neighborhood $\Omega$ satisfying (\ref{EQN:DESIRED_REGION}).
\end{proof}

\subsection{Completing the proof of Theorem A'}

Using the invariance (\ref{BOTTCHER_INVARIANCE}), for any $(\first,\bsecond) \in \WW^s_\loc(\mycirc)$ we have $|\varphi(\first,\bsecond)| = 1$ so that $\psi:=\log|\varphi|$ will be a real analytic function that vanishes on $\WW^s_\loc(\mycirc)$.  Notice on that $\myline$, we have $\varphi(\first,\bm 0) = \first$ and hence $\psi(\first,\bm 0) = \log|\first|$.  Since the derivative $D\psi$ is non-zero on $\mycirc$, we have that $\{\psi = 0\}$ is a real analytic $2n-1$ real-dimensional manifold in some neighborhood of $\mycirc$.

By Corollary \ref{COR:MAN}, $\WW^s(\mycirc) \subset \{\psi =
0\}$ is also a real $2n-1$ dimensional manifold.  Thus, by invariance of
domain, $\WW^s(\mycirc) = \{\psi = 0\}$ in this neighborhood.
\qed

\section{Examples illustrating Theorem A.}\label{SEC:EXAMPLES}

\subsection{Regular Polynomial Endomorphisms of $\mathbb C^2$.}

%
Suppose $f\colon\mathbb C^2\rightarrow\mathbb C^2$ is a regular polynomial endomorphism of degree  $d\geq2$.  Then $f$ has the form
\begin{equation}
f(x,y)=(p(x,y),q(x,y)),
\end{equation} 
where $p$ and $q$ are polynomials whose highest degree terms have no common zeros other than $(0,0)$, so $f$ extends holomorphically to $\mathbb P^2$. Then the line at infinity, $L_{\infty}$, is transversally superattracting with degree $d$, and $f\mid L_{\infty}$ is a one variable rational map of degree~$d$ having Julia set $J_{\infty}\subset L_{\infty}$. 
\begin{corollary}\label{COR:REG_ENDO} If $f$ is a regular polynomial endomorphism of $\mathbb C^2$ for which $f \mid L_\infty$ is conjugate to $z\mapsto z^d$, then $\WW^s_\loc(J_{\infty})$ has real analytic regularity.
\end{corollary}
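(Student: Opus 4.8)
The plan is to check that a regular polynomial endomorphism $f$ of $\mathbb C^2$, together with its line at infinity $L_\infty$, fits the hypotheses of Theorem A (equivalently, after localizing, of Theorem A'), so that the corollary follows at once. First I would recall that, by the definition of \emph{regular}, the top-degree parts $p_d,q_d$ of $p,q$ have no common zero besides the origin; homogenizing, $f$ extends to the map $[x:y:w]\mapsto[\,\widetilde p(x,y,w):\widetilde q(x,y,w):w^d\,]$ of $\mathbb P^2$, whose indeterminacy locus is $\{w=0,\ p_d=q_d=0\}=\emptyset$. Hence $f\colon\mathbb P^2\to\mathbb P^2$ is holomorphic (in particular near $L_\infty$), it is a finite surjective map of topological degree $d^2$ and so dominant, and $L_\infty=\{w=0\}$ is invariant with $f\mid L_\infty$ given in homogeneous coordinates by $[x:y:0]\mapsto[p_d(x,y):q_d(x,y):0]$.

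Next I would verify the transversal superattraction: in an affine chart centered at a point of $L_\infty$ the coordinate $w$ is a local defining function for $L_\infty$, and $f$ carries it to $w^d$, which vanishes to order $d$ along $L_\infty$; thus $L_\infty$ is transversally superattracting of degree $a=d$ in the sense of \S2. By hypothesis $f\mid L_\infty$ — a rational map of degree $d$ — is conjugate to $z\mapsto z^b$, which forces $b=d$, and under this conjugacy its Julia set $J_\infty$ is carried to the Julia set of $z\mapsto z^d$, namely the unit circle; so $J_\infty$ is precisely the invariant circle $S$ of Theorem A. Since $a=b=d$ we have $a\ge b$, and applying Theorem A with $X=\mathbb P^2$, $L=L_\infty$, $S=J_\infty$ yields that $\WW^s_\loc(J_\infty)$ has real analytic regularity. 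Equivalently, one may first restrict to a forward-invariant tubular neighborhood $N\supset L_\infty$ — such as the neighborhood $\Omega$ produced in Lemma \ref{LEM:GOOD_NBHD}, which exists precisely because $L_\infty$ is transversally superattracting — and invoke Theorem A' on $N$.

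I do not expect any genuine difficulty here: this is a direct application, and the only points needing care are bookkeeping. One should make sure that "vanishing of the transverse coordinate to order $d$" coincides with the order-$\ge a$ vanishing condition on $\chi\circ f$ used in \S2 (immediate from $w\mapsto w^d$), and that "conjugate to $z\mapsto z^d$" pins down $b=d$ exactly, so that we are using the boundary case $a=b$ of Theorem A' rather than a strict inequality. With these observations in place the corollary follows immediately.
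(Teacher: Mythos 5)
Your proposal is correct and matches the paper's approach: the paper simply records that a regular polynomial endomorphism extends holomorphically to $\mathbb{P}^2$ with $L_\infty$ invariant and transversally superattracting of degree $d$, so that with $f\mid L_\infty$ conjugate to $z\mapsto z^d$ one has $a=b=d$ and Theorem A applies directly. Your extra bookkeeping (no indeterminacy on $\{w=0\}$ by regularity, identification of $J_\infty$ with the invariant circle, the boundary case $a=b$) is exactly the intended verification.
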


Real analyticity of the stable manifold considered in \cite[\S 6.2]{hruska_roeder2008} is a direct application of Corollary \ref{COR:REG_ENDO}.

\subsection{Degenerate Newton Mappings.}

Newton mappings used to find the common roots of $P(x,y) = x(1-x)$
and $Q(x,y) = y^2+Bxy-y$  were considered dynamically in \cite{roeder2005}.  They have the form
\begin{eqnarray}\label{DEG_NORMALIZATION1}
N (x,y) = \left(\frac{x^2}{2x-1},\frac{y(Bx^2+2xy-Bx-y)}{(2x-1)(Bx+2y-1)}\right).
\end{eqnarray}
\noindent
We will consider their extension as rational maps of $\mathbb{P}^1 \times \mathbb{P}^1$.
They are skew products with the first coordinate having superattracting fixed points of degree $2$ at $x=0$ and $x=1$, so
the vertical lines $\{x=0\} \times \mathbb{P}^1$ and $\{x=1\}
\times \mathbb{P}^1$ are transversally superattracting for $N$ with the same degree.  Using the formula, one can check 
that $N$ has no indeterminate points in some neighborhood of these two lines.

Restricted to $\{x=0\} \times \mathbb{P}^1$, $N$ is the one-dimensional Newton map for the quadratic polynomial with roots at $y=0$ and $y=1$.
It is therefore conjugate to $z \mapsto z^2$, having an invariant circle $S_0$ corresponding to the points of equal distance from
$y=0$ and $y=1$ in $\mathbb{P}^1$.  ($S_0$ is the closure of ${\rm Im}(y) = \frac{1}{2}$ in $\mathbb{P}^1$.)

Similarly, the restriction of $N$ to $\{x=1\} \times \mathbb{P}^1$ is the one-dimensional
Newton map for the quadratic polynomial with roots at $y=0$ and $y=1-B$.  Thus, it is conjugate
to $z \mapsto z^2$, with an invariant circle $S_1$ corresponding to the
points of equal distance from $y=0$ and $y=1-B$ within $\mathbb{P}^1$.

Both of the lines $\{0\}\times \mathbb{P}^1$ and $\{1\} \times \mathbb{P}^1$ is
transversally superattracting with degree $2$, with the restriction of $N$
to each of them conjugate to $z \mapsto z^2$.  Therefore, it follows immediately
from Theorem A that the local stable manifolds $\WW^s_\loc(S_0)$ and
$\WW^s_\loc(S_1)$ are real analytic.  This was proven previously in
\cite{roeder2005} using more specific details of the mapping.
  
\subsection{An Example with indeterminacy.}

Consider the polynomial mapping $g\colon\C^2\rightarrow\C^2$ 
given by
\begin{eqnarray}\label{EQN:G}
g(x,y) = \left(x^2+y(1+xy),y^3(1+xy)\right).
\end{eqnarray}
Within $\C^2$, the line $L:=\{y=0\}$ is invariant and transversally
superattracting with degree $3$ and $g|L$ is given by $x \mapsto x^2$.
Let $S := \{|x| =1, y=0\}$ be the invariant circle.
Although there is the needed domination between the degrees ($3 > 2$), to apply Theorem
A we need to check how $g$ extends to a neighborhood of infinity on $L$.
The extension of $g$ to $\mathbb{P}^2$ is given in homogeneous coordinates by
\begin{eqnarray*}
g[X:Y:Z] = [X^2 Z^3+YZ^2(Z^2+XY):Y^3(Z^2+XY):Z^5].
\end{eqnarray*}
There is a point of indeterminacy for $g$ at $[1:0:0]$ on the projective line $Y=0$, which we'll also denote by $L$.
Therefore, Theorem A does not immediately apply.

Let us perform two blowups.  We first blow-up the point $[1:0:0]$ and we then
blow-up the point where the proper transform of $L$ intersects the exceptional
divisor over $[1:0:0]$.  We'll denote the space obtained after doing these two
blow-ups by $\widetilde{\mathbb{P}^2}$, the projection by $\pi:
\widetilde{\mathbb{P}^2} \rightarrow \mathbb{P}^2$, the proper transform of $L$
after these two blow-ups by $\widetilde{L}$, the invariant circle within
$\widetilde{L}$ by $\widetilde{S}$, and the lift of $g$ to the blown-up space
by $\widetilde{g}: \widetilde{\mathbb{P}^2} \rightarrow
\widetilde{\mathbb{P}^2}$.

A neighborhood of $\widetilde{L}$ can be described by two systems of coordinates
$(x,y)$ and $(\zeta,\tau)$, where $x = X/Z, y=Y/Z$ are the original affine
coordinates on $\C^2$ and $\zeta = Z/X, \tau = XY/Z^2$.  In the first system of
coordinates, $\widetilde{g}$ is given by (\ref{EQN:G}).  In the second system of coordinates, $\widetilde{g}$ is given by
\begin{eqnarray*}
\widetilde{g}(\zeta,\tau) = \left({\frac {{\zeta }^{2}}{1+\tau\,{\zeta }^{3}(1+\tau)}},{\tau}^{3}  \zeta  \left( 1+\tau \right)  \left( 1+ \tau{\zeta }^{3}+{\tau}^{2}{\zeta }^{3} \right) \right).
\end{eqnarray*}
In the second system of coordinates, $\widetilde{L}$ is given by $\tau = 0$, so we see
that $\widetilde{g}$ is holomorphic in a neighborhood of
$\widetilde{L}$.  Moreover, $\widetilde{L}$ invariant and transversally
superattracting with degree $3$ and $\widetilde{g}|\widetilde{L}$ still given
by $x \mapsto x^2$.  Therefore, Theorem A applies to give that the
local stable manifold $\WW^s_\loc(\widetilde{S})$ for $\widetilde{S}$ under $\widetilde{g}$ is real analytic.

Notice that $\widetilde{g}$ and $g$ are birationally conjugate by means of
$\pi$.  Moreover, restricted to small neighborhoods of $\widetilde{S}$ and $S$,
this birational conjugacy becomes an honest holomorphic conjugacy.  Since the
local stable manifolds  $\WW^s_\loc(\widetilde{S})$ and
$\WW^s_\loc(S)$ are defined in terms of the action of iterates of
$\widetilde{g}$ and $g$, respectively,  on these small neighborhoods, we
conclude that $\WW^s_\loc(S)$ is also real analytic.

\begin{remark}
This third example illustrates that in order to  apply Theorem A, one sometimes needs 
to do some blow-ups to obtain a map without indeterminacy in a neighborhood of $L$. 
%
\end{remark}

\section{Examples for which $\WW^s_\loc(\mycirc)$ is not real analytic.}\label{SEC:NON_ANALYTIC}

We'll now show that the hypothesis in Theorem A that $\myline$ is transversally superattracting with degree greater than
or equal to the degree of $f|\myline$ cannot be eliminated without adding additional hypotheses. 

The Migdal-Kadanoff Renormalization map $R: \mathbb{P}^2 \rightarrow \mathbb{P}^2$ for the Ising model on the DHL is given
in homogeneous coordinates by 
\begin{eqnarray*}
R[U:V:W] = [(U^2+V^2)^2:V^2(U+W)^2:(W^2+V^2)^2].
\end{eqnarray*}
For this map, the projective line $L_0 = \{V=0\}$ is transversally
superattracting with degree $2$ with $R$ holomorphic on a forward invariant
neighborhood of $L_0$.  Restricted to $L_0$, $R$ is given by $u \mapsto u^4$,
where $u=U/W$, so $a=2$ and $b=4$.  The invariant circle is denoted $B := \{V=0,|u|
= 1\}$.  Below, we will show that $\WW^s_\loc(B)$ is not real analytic in the
neighborhood of any point of $B$, thus proving Theorem B.

The second example for which $a < b$ and $W^s(S)$ is not real analytic is the following polynomial skew product of $f: \mathbb{P}^2 \rightarrow \mathbb{P}^2$ given in affine coordinates by
\begin{eqnarray*}
f(\first,\second) = (\first^3 + 2\second\first^2, \second^2).
\end{eqnarray*}
One can check that this map is holomorphic on a forward invariant neighborhood in $\mathbb{P}^2$ of the invariant line $\myline=\{\second=0\}$.
Moreover, $\myline$ is transversally superattracting with degree $2$, and $f|\myline$ is given by $\first \mapsto \first^3$.  Thus, $a = 2 < 3 =b$.
For this map, $\WW^s_\loc(\mycirc)$ is not real analytic in the neighborhood of any point of $\mycirc$.

In this section, we'll provide a detailed proof of Theorem B, showing that $\WW^s_\loc(B)$ is not real analytic.  An adaptation of
the same techniques can be used to show the analogous result for the skew product $f$.  We leave details of this adaptation to the reader.

\subsection{The Migdal-Kadanoff Renormalization.}

In the remainder of this section, we will adopt the notation from the recent
preprints \cite{blr1,blr2} by Bleher, Lyubich, and Roeder.
Although $R: \mathbb{P}^2 \rightarrow \mathbb{P}^2$ is more convenient for illustrating Theorem A, in the proof of Theorem B
it will be more convenient to work the expression of the Migdal-Kadanoff renormalization $\R: \mathbb{P}^2 \rightarrow \mathbb{P}^2$ in the physical coordinates $(z,t)$.  In these coordinates, it is given by
\begin{equation}\label{r}
(z_{n+1},t_{n+1})\ =\ \left(\frac{z_n^2+t_n^2}{z_n^{-2}+t_n^2},\frac{z_n^2+z_n^{-2}+2}{z_n^2+z_n^{-2}+t_n^2+t_n^{-2}}\right)\ :=\ \R(z_n,t_n).
\end{equation}
We consider $(z,t)$ as affine coordinates on $\mathbb{P}^2$ with $z = Z/Y, t=T/Y$ for some system of homogeneous coordinates $[Z:T:Y]$.
The map $\R$ has an invariant projective line $\LLzero = \{T=0\}$ that is transversally superattracting, except for an indeterminate point at
$\B0 := [0:0:1]$, and $\R | \LLzero$ is given by $z \mapsto z^4$.  The invariant circle is given by $\invcirc=\{|z|=1,t=0\}$. 

The map $R$ is semi-conjugate to $\R$ by means of a rational map $\Psi: \mathbb{P}^2 \rightarrow \mathbb{P}^2$:
\begin{eqnarray}
\begin{CD}
\mathbb{P}^2 @>\R>> \mathbb{P}^2 \\
@VV\Psi V        @VV\Psi V\\
\mathbb{P}^2 @>R>> \mathbb{P}^2
\end{CD}
\end{eqnarray}
with $[U:V:W] = \Psi([Z:T:Y]) = [Y^2:ZT:Z^2]$.
The map $\Psi$ sends $\LLzero$ to $\myline_0$, $\invcirc$ to $B$, and is holomorphic in a neighborhood of $\invcirc$. 
Therefore, $\WW^s_\loc(\invcirc) = \Psi^{-1}(\WW^s_\loc(B))$.  In particular, if $\WW^s_\loc(B)$ were real analytic in the neighborhood of any
point of $B$, then $\WW^s_\loc(\invcirc)$ would be real analytic in the neighborhood of the preimage of that point under $\Psi$.
So, Theorem B will follow from:

\begin{theorem2prime}
\label{THM2PRIME} 
The stable manifold $\stable$ is not \ra~at any point.
\end{theorem2prime}

\begin{remark}
The reason we originally stated Theorem B for $R$ rather than $\R$ is that $R$ is holomorphic 
in a full neighborhood of $L_0$, so that it illustrates why the hypothesis on $a$ and $b$ can't be eliminated in Theorem A.  One can also resolve the indeterminacy $\B0 \in \LLzero$ for $\R$,
placing it in the context of Theorem A,
via a suitable birational modification (two blow-ups and one blow-down), but that is somewhat more complicated.
\end{remark}

We will begin by proving the following proposition, and proof of Theorem B' will follow shortly thereafter.

\begin{proposition}\label{u} $\stable$ is not real analytic in any full neighborhood of $\invcirc$.
\end{proposition}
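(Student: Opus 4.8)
The plan is to argue by contradiction: assume $\stable$ is real analytic in some full neighborhood $\nbhd$ of $\invcirc$. Then, following the telescoping-product construction of $\S\ref{SEC:BOTTCHER}$ adapted to the physical coordinates $(z,t)$ of $\R$, I would build a co-dimension 1 B\"ottcher function $\varphi$ on a forward invariant neighborhood of $\invcirc$ satisfying $\varphi(\R(z,t)) = \varphi(z,t)^4$ and $\varphi(z,0) = z$ on $\LLzero$. The analyticity hypothesis is what lets this work: without the domination $a \geq b$ one cannot in general obtain the bound of Lemma \ref{LEM:GOOD_NBHD}, but if $\stable$ is already known to be analytic one gets, via $\psi = \log|\varphi|$ vanishing on $\stable$, enough control to define $\varphi$ as a holomorphic function near $\invcirc$. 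Concretely, I would first verify that the quotients $z_{n+1}/z_n^4$ appearing in the telescoping product stay close to $1$ along orbits that converge to $\invcirc$ inside the (assumed analytic) stable manifold, so that the $4^n$-th roots and the infinite product make sense exactly as in equations (\ref{INFINITE_PRODUCT})--(\ref{QUOTIENT2}).

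Next I would exploit the discrepancy between $a = 2$ and $b = 4$. The point is that $\stable$ is a complex analytic hypersurface if it is real analytic and $\R$-invariant in the strong sense coming from $\varphi$; but the local form of $\R$ transverse to $\LLzero$ vanishes only to order $a = 2$, whereas the hypersurface $\{\varphi = \text{const}\}$ through a point of $\stable$ would have to be forward invariant in a way compatible with the order-$4$ behavior of $\R|\LLzero$. I would look at the $1$-jet or $2$-jet of $\R$ at a point $(z_0,0) \in \invcirc$ and at the blow-up picture used in Lemma \ref{LEM:GOOD_NBHD}: there the extended map has $\bm\lambda' $ carrying a factor $z^{b(a-b)} = z^{4(2-4)} = z^{-8}$, i.e. a genuine pole rather than a holomorphic germ, which is precisely the obstruction. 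So the assumed analyticity of $\stable$ must be incompatible with the negative exponent $b(a-b) < 0$; making this incompatibility precise is the heart of the argument.

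The mechanism I would use to extract the contradiction is to compare the putative $\varphi$ against the actual geometry of $\R$ near $\invcirc$: $\varphi$ would be a holomorphic function, constant equal to some unimodular value on the analytic hypersurface $\stable$, with $|\varphi| = 1$ on $\stable$, $|\varphi| \neq 1$ off it (in a one-sided sense $\psi = \log|\varphi|$ changes sign), and $D\varphi \neq 0$ on $\invcirc$. Then $\stable = \{|\varphi| = 1\}$ locally, so $\stable$ is a smooth real-analytic hypersurface containing the complex curve $\LLzero \cap \invcirc$'s complexification in a controlled way; one computes the normal derivative of the defining function $\psi$ and tracks how it transforms under $\R$ using $\psi\circ\R = b\,\psi = 4\psi$. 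Iterating, the transverse contraction rate forced by $\psi\circ\R = 4\psi$ would be $|z|^{4}$-like, but the actual transverse contraction of $\R$ at a point of $\invcirc$ is governed by the order-$2$ vanishing, i.e. is $|z|^{2}$-like; these two rates cannot agree, so no such $\varphi$, hence no such analytic $\stable$, can exist. This is exactly the $a<b$ failure.

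The main obstacle I anticipate is the bookkeeping needed to actually \emph{construct} $\varphi$ under only the analyticity assumption — in $\S\ref{SEC:PROOF}$ the bound of Lemma \ref{LEM:GOOD_NBHD} came for free from $a \geq b$, and here it does not, so one has to use the analytic hypersurface $\stable$ itself to produce a forward invariant neighborhood on which the telescoping product converges. I would handle this by working in the blown-up coordinates $(z,\bm\lambda)$ with $\bm\lambda = \bsecond/z^b$ and restricting attention to a neighborhood of $\stable$ inside which $|\bm\lambda|$ is bounded because $\stable$ is a graph; the analyticity guarantees $\stable$ is cut out by a convergent power series, which pins down $|\bsecond_n|/|z_n|^b$ along stable orbits. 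Once $\varphi$ exists near $\invcirc$, the remaining steps (extending its domain, analyzing $\R$ near $\B0$, and reading off the contradiction from the $z^{b(a-b)}$ factor) parallel the outline already given in the introduction for the proof of Theorem B, and the contradiction with the transverse rate $|z|^2$ versus $|z|^4$ closes the argument.
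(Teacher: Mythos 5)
Your high-level skeleton (contradict analyticity by building a B\"ottcher-type function $\varphi$ with $\varphi\circ\R=\varphi^4$, then exploit $a<b$) matches the paper, but both of your key steps have genuine gaps. First, the construction of $\varphi$: the telescoping product of \S\ref{SEC:BOTTCHER} cannot be rescued by the analyticity hypothesis in the way you describe. The obstruction to convergence is not located on $\stable$ (there $t_n\to 0$ superexponentially and the quotients $z_{n+1}/z_n^4$ do tend to $1$), but on the open set of nearby points attracted to $\fp$, which any full neighborhood of $\invcirc$ must contain; controlling $|\bsecond_n|/|z_n|^b$ ``along stable orbits because $\stable$ is a graph'' says nothing about these points, and a holomorphic $\varphi$ needs an open domain. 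In fact, for orbits converging to $\fp$ one has $\two_{n+1}\asymp\two_n^2$ while $\one_{n+1}\asymp\one_n^2(\one_n+2\two_n)^2$, so $z_{n+1}/z_n^4\to 0$ and $|z_n|^{1/4^n}\to 1$; hence the limit of $|\varphi_n|$ is identically $1$ on that open side, so no limit of the $\varphi_n$ can be a nonconstant holomorphic function with $\stable=\{|\varphi|=1\}$ and $\varphi(\one,0)=\one$. (If the telescoping product did work near $\invcirc$, it would work without any analyticity assumption, and the rest of the argument would produce an unconditional contradiction.) The paper uses the analyticity hypothesis in a completely different way: $\stable$ is Levi-flat, being foliated by the stable curves $\WW^s_\loc(x)$, and Rea's Theorem extends this Levi foliation holomorphically to a neighborhood; $\varphi$ is then defined as the holonomy along leaves to $\LLzero$ (Proposition \ref{b}). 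That is the one place real analyticity genuinely enters, and it is absent from your proposal.

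Second, your mechanism for the contradiction --- a transverse-rate or jet comparison at $\invcirc$ (``$\psi\circ\R=4\psi$ forces $|z|^4$-like contraction, but the transverse vanishing is order $2$''), or the negative exponent $z^{b(a-b)}$ in the blow-up --- does not yield a proof. There is no local obstruction at $\invcirc$: writing $\varphi(z,t)=z+t^2v(z,t)+\cdots$, the functional equation $\varphi\circ\R=\varphi^4$ can be solved order by order in $t$ near $|z|=1$ (e.g.\ $v(z,0)=(z^{-1}-z^3)/4$), so no finite-jet computation there can be contradictory; and the pole in the blow-up explains only why Lemma \ref{LEM:GOOD_NBHD} fails, not why $\stable$ cannot be analytic. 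The actual contradiction in the paper is global: one must extend $\varphi$ to a neighborhood of $\LLzero\setminus\{\fp,\fptwo\}$ by averaging over the degree-$8$ fibers of $\R^n$ (Proposition \ref{omega}), establish the cone and distortion estimates near $\fp$ (Propositions \ref{c} and \ref{DISTORTION}, Lemmas \ref{horizontal_history} and \ref{nested_bullets}), and then produce points with both a horizontal preorbit (whose deep preimages stay near $\LLzero$ away from $\fp$, where $|\varphi|\geq\epsilon^4$) and a vertical preorbit (whose deep preimages converge to $\fp$, where $\varphi\to 0$ by Proposition \ref{phi_to_zero}); the invariance forces these two preimages of the same point to have equal $|\varphi|$, which is the contradiction. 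This material, which you defer to ``parallel the outline in the introduction,'' is the heart of the proof rather than routine bookkeeping.
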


This proposition will be proven by contradiction, so for the remainder of this section, we assume $\stable$ is real analytic in a full neighborhood of $\invcirc$.  We will begin by describing the dynamics of $\R$ near $\LLzero$, and after that, with the construction of a co-dimension 1 B\"ottcher function $\varphi$.  This is followed by the extension of the domain of $\varphi$ and an exploration of the behavior of $\varphi$ and $\R$ in the extension.  The section concludes with a proof of Proposition \ref{u}.

\subsection{Dynamics in a Neighborhood of $\LLzero$.}

We will now briefly summarize basic properties of the dynamics for $\R$ in a neighborhood of $\LLzero$ from \cite[Section 4]{blr1}.

Let $\mathbb{D}_0 := \{|z| < 1, t=0\} \subset \LLzero$.  The orbit of any $z \in
\mathbb{D}_0$ will converge to an indeterminate point $\B0 := \{(0,0)\}$.  (Informally, we will denote these points by $\WW^s(\B0)$.)
Meanwhile, points near $\B0$ but not on $\LLzero$ will converge to a
superattracting fixed point $\fptwo := \{(0,1)\}$.

To see what happens for large $|z|$, we write $\R$ in homogeneous coordinates,
obtaining
\begin{equation}
\R\colon[Z:T:Y]\mapsto[Z^2(Z^2+T^2)^2:T^2(Z^2+Y^2)^2:(Z^2+T^2)(T^2Z^2+Y^4)].
\end{equation}
There is another superattracting fixed point $\fp := [1:0:0]$, which attracts all points of $\LLzero$ with $|z| > 1$.

\begin{lemma}
$\WW^s(\B0) \cup \WW^s_{\loc}(\fptwo) \cup \stable \cup \WW^s_{\loc}(\fp)$ fills some neighborhood of $\LLzero\setminus\{\B0\}$.
\end{lemma}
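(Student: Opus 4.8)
The plan is to show that every point of a punctured neighborhood of $\LLzero$ — that is, of $\LLzero \setminus \{\B0\}$ — belongs to exactly one of the four sets listed, by tracking the forward orbit. Fix a small forward-invariant tubular neighborhood $V$ of $\LLzero$ on which $\R$ is well-defined and holomorphic away from $\B0$ (such a $V$ exists by the discussion taken from \cite[Section 4]{blr1}, after the indeterminacy at $\B0$ is understood; alternatively one may work in the resolved model described in the preceding remark). The idea is that the first coordinate $z$ of $\R|\LLzero$ is $z \mapsto z^4$, which on $\LLzero$ partitions that line into the basin of $\fp$ (the points with $|z|>1$), the basin of $\B0$ (the points with $|z|<1$, which are carried into $\mathbb{D}_0$ and thence toward $\B0$), and the invariant circle $\invcirc = \{|z|=1, t=0\}$ itself. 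A point $(z,t) \in V$ with $t \neq 0$ that stays in $V$ under all forward iterates has a forward orbit whose $t$-coordinate governs whether it escapes the tube; points that remain in $V$ forever must have $f^n(z,t) \to \LLzero$, and then the limit set on $\LLzero$ is forward invariant under $z \mapsto z^4$, hence contained in $\{|z| \le 1\} \cup \invcirc \cup \{|z|\ge 1\}$ appropriately, pinning the orbit into exactly one of the four listed stable sets.

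Concretely, I would proceed as follows. First, shrink $V$ so that it decomposes into three pieces analogous to the neighborhood $\Omega$ built in Lemma \ref{LEM:GOOD_NBHD}: a neighborhood $U_{\fp}$ of $\fp$ which is forward invariant and on which every orbit converges to $\fp$ (standard superattracting local theory, since $\fp$ is a superattracting fixed point of $\R$), a neighborhood $U_{\fptwo}$ of $\fptwo$ with the same property (using that $\fptwo$ is superattracting, as recorded just before the lemma), and a ``collar'' $V' := V \setminus (U_{\fp} \cup U_{\fptwo})$ over the compact annular part of $\LLzero$ away from $\fp$, $\B0$, and $\fptwo$. On $V'$ one uses an invariant cone-field/hyperbolicity argument exactly as in the construction of stable laminations (Lemma \ref{hyperbolic} and the Remark following Corollary \ref{COR:MAN}): over that compact piece of $\LLzero$, $\R$ is transversally superattracting and the transverse coordinate contracts, so every point of $V'$ either leaves $V'$ in finite time (landing in $U_{\fp} \cup U_{\fptwo}$ or in the region near $\B0$) or stays in $V'$ forever and is then forced onto a local stable manifold of a point of $\invcirc$ — that is, into $\stable$. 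The only subtle region is the neighborhood of $\B0$: here one uses the cited fact that points on $\LLzero$ near $\B0$ flow into the indeterminate point ($\WW^s(\B0)$), while points off $\LLzero$ near $\B0$ flow to $\fptwo$ ($\WW^s_\loc(\fptwo)$); this is precisely the local picture quoted from \cite[Section 4]{blr1}, and I would invoke it rather than re-derive it.

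Assembling these: a point $p \in V$ near $\LLzero \setminus \{\B0\}$ has a forward orbit which, after finitely many steps, either enters $U_{\fp}$ (so $p \in \WW^s_\loc(\fp)$), or enters $U_{\fptwo}$ (so $p \in \WW^s_\loc(\fptwo)$), or enters the $\B0$-region off $\LLzero$ and thence $U_{\fptwo}$ (again $\WW^s_\loc(\fptwo)$), or enters the $\B0$-region on $\LLzero$ (so $p \in \WW^s(\B0)$), or never leaves $V'$ and hence lies on the stable manifold of a point of $\invcirc$, i.e. $p \in \stable$. This exhausts the neighborhood. The main obstacle is the bookkeeping near $\B0$: one must be careful that the neighborhood of $\LLzero$ is chosen so that an orbit cannot oscillate — e.g. approach $\B0$ along $\LLzero$ and then be ejected back out toward $\invcirc$ — and that the four stable sets glue to an actual open neighborhood rather than leaving gaps. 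This is handled by taking $V$ small enough that $U_{\fp}$, $U_{\fptwo}$, the $\B0$-region, and the collar $V'$ genuinely cover a neighborhood of $\LLzero \setminus \{\B0\}$ and that each of $U_{\fp}$, $U_{\fptwo}$ and the appropriate $\B0$-piece is forward invariant, so no orbit can escape the classification; the cone-field estimate on $V'$ then does the rest exactly as in Section \ref{SEC:PROOF}.
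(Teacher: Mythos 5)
Your proposal should be measured against the fact that the paper does not actually prove this lemma: its entire ``proof'' is the citation ``See \cite[Lemma 4.2]{blr1}.''  So you are supplying an argument where the paper defers wholesale to the earlier work, and what you sketch --- decompose a tube around $\LLzero$ into a forward-invariant basin piece at $\fp$, a piece near $\B0$, and a compact collar over the rest; use transverse superattraction on the collar to force orbits either to converge to $\invcirc$ or to exit through one of the two ends; and quote the local dichotomy at the indeterminate point (on $\LLzero$ one converges to $\B0$, off $\LLzero$ one is ejected to $\fptwo$) from \cite[Section 4]{blr1} --- is in substance a reconstruction of the mechanism behind the cited Lemma 4.2 rather than a new route.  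The structure is sound, but three points would need tightening to make it a proof.  (1) Your appeal to the construction of Lemma \ref{LEM:GOOD_NBHD} is only an analogy: the forward-invariant regions $U_1,U_2$ there are built by blow-ups that use $a\geq b$, which fails for $\R$ ($a=2<b=4$); what you actually need at $\fp$ is much less, namely a small forward-invariant bidisk $\bidisk_\epsilon$ inside the basin of the superattracting point (exactly the neighborhood used later in the paper), together with the exit bookkeeping that an orbit leaving the collar through that end lands in $\bidisk_\epsilon$ and not outside the classification.  Near $\B0$ no forward-invariant tube around $\LLzero$ exists at all --- points with $|t|$ comparable to $|z|$ are thrown toward $\fptwo$ --- which is precisely why that end must be handled by the local analysis of \cite{blr1}, as you do.  (2) For a point that never leaves the collar you should say why it converges to $\invcirc$: its $t$-coordinate contracts, so the $\omega$-limit set is a forward-invariant subset of the annular part of $\LLzero$, and since $z\mapsto z^4$ expels everything in that annulus except $\{|z|=1\}$, the $\omega$-limit set lies in $\invcirc$; this also forces you to be slightly careful about the distinction between the global stable sets your classification naturally produces and the local sets $\WW^s_{\loc}$ appearing in the statement (an imprecision already present in the statement as quoted from \cite{blr1}).  (3) A small slip: $\fptwo=(0,1)$ does not lie on $\LLzero$, so the collar need only avoid $\fp$ and $\B0$; the set $\WW^s_{\loc}(\fptwo)$ enters the union only because points near the $\B0$ end but off $\LLzero$ converge to $\fptwo$.
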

\noindent
See \cite[Lemma 4.2]{blr1}.

There is another invariant line $\LLone:= \{t = 1\}$ passing through $\fptwo$ and $\fp$.  We have $\R|\LLone: z \rightarrow z^2$.

For the remainder of this section, it will convenient to use a system of affine 
coordinates centered at $\fp$.
We will use $(\one=Y/Z-T/Z,\two=T/Z)$, so that $\LLzero = \{\two = 0\}$ and $\LLone = \{\one = 0\}$. 
In these coordinates, 
\begin{equation}\label{r2}
(\one_{n+1},\two_{n+1}) = \left(\one_n^2\left(\frac{\one_n+2\two_n}{1+\two_n^2}\right)^2, \two_n^2\left(\frac{1+(\two_n+\one_n)^2}{1+\two_n^2}\right)^2\right) :=\R(\one_n,\two_n).
\end{equation}
As before, $\R|\LLzero: \one \rightarrow \one^4$ and $\R|\LLone: \two \rightarrow \two^2$.

\subsection{Co-dimension 1 B\"ottcher Function $\varphi$.}

We continue by exploring the some preliminary consequences of the hypothesis that $\stable$ is \ra~in such a full neighborhood of $\invcirc$.

\begin{proposition}\label{b} If $\stable$ is \ra~in a full neighborhood of $\invcirc$, then there is another neighborhood $\nbhd_0$ of $\invcirc$ and a holomorphic function $\varphi\colon\nbhd_0\rightarrow\mathbb C$ with
\begin{itemize}
\item[(i)] if $(\one,\two)\in\nbhd_0$ and $\R(\one,\two)\in\nbhd_0$, then $\varphi(\R(\one,\two))=\varphi(\one,\two)^4$,
\item[(ii)] $\stable=\{|\varphi(\one,\two)|=1\}$, and
\item[(iii)] $\varphi(\one,0)=\one$.
\end{itemize}
\end{proposition}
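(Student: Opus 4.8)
The strategy is to build $\varphi$ by straightening, to a holomorphic foliation, the foliation of $\stable$ by the stable discs $\WW^s_\loc(x)$, $x\in\invcirc$; the hypothesis of real analyticity is exactly what makes this possible holomorphically. Recall from Corollary~\ref{COR:MAN} and the discussion preceding it that, near $\invcirc$, $\stable$ is a $3$-real-dimensional manifold equal to the disjoint union of the holomorphic discs $\WW^s_\loc(x)$ --- each of them, as in the Remark of \S\ref{SEC:PROOF}, a graph $\{\one=\eta(\two,x)\}$ over the $\two$-axis with $\eta(0,x)=x$ --- and that these discs are transverse to $\LLzero=\{\two=0\}$, meeting it in the single point $(x,0)$ (cf.\ the proof of Lemma~\ref{m}). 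Under our standing assumption $\stable$ is therefore a real analytic Levi-flat hypersurface, and since a Levi-flat hypersurface in a complex surface carries a unique complex curve through each point, its Levi foliation is exactly this foliation by stable discs.

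First I would work locally. Near a point $p\in\invcirc$, the local normal form for real analytic Levi-flat hypersurfaces gives holomorphic coordinates in which $\stable=\{\operatorname{Re}\xi=0\}$ and the stable discs are the fibres $\{\xi=\mathrm{const}\}$ with $\operatorname{Re}\xi=0$; keeping \emph{all} the fibres $\{\xi=\mathrm{const}\}$ produces a holomorphic foliation of a full neighbourhood of $p$ that extends the stable-disc foliation. Any two such local extensions have parallel tangent fields along $\stable$, so the Wronskian determinant of these fields vanishes on $\stable$; as $\stable$ is $3$-dimensional it is contained in no complex curve, hence that determinant vanishes identically and the two extensions agree. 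The local extensions therefore patch to a single holomorphic foliation $\mathcal{F}$ of a neighbourhood $\nbhd_0$ of $\invcirc$. Shrinking $\nbhd_0$ to a thin tubular neighbourhood of $\invcirc$, every leaf of $\mathcal{F}$ is, within $\nbhd_0$, a graph over a disc in the $\two$-axis; in particular it meets $\LLzero$ in exactly one point, and the leaves of $\mathcal{F}$ contained in $\stable$ are precisely the $\WW^s_\loc(x)$, $x\in\invcirc$.

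Now set $\varphi(\one,\two)$ equal to the $\one$-coordinate of the point where the $\mathcal{F}$-leaf through $(\one,\two)$ meets $\LLzero$. In a local straightening chart, after renormalising the transverse coordinate so that it restricts to $\log\one$ on $\LLzero$, this function equals $\exp$ of that coordinate, so $\varphi$ is holomorphic; the description is chart-independent, so $\varphi$ is globally defined on $\nbhd_0$. By construction $\varphi(\one,0)=\one$, which is (iii). A point $(\one,\two)\in\nbhd_0$ lies on $\stable$ iff its $\mathcal{F}$-leaf is one of the $\WW^s_\loc(x)$, iff that leaf meets $\LLzero$ inside $\invcirc$, equivalently $|\varphi(\one,\two)|=1$; this is (ii). For (i): on $\stable$, $\varphi$ is constant on each $\WW^s_\loc(x)$ with value $x$, and $\R$ carries $\WW^s_\loc(x)$ into $\WW^s_\loc(\R(x))=\WW^s_\loc(x^4)$ because $\R|\LLzero$ is $\one\mapsto\one^4$; hence $\varphi(\R(\one,\two))=x^4=\varphi(\one,\two)^4$ for every $(\one,\two)\in\stable$ close enough to $\invcirc$. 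Thus the holomorphic function $(\one,\two)\mapsto\varphi(\R(\one,\two))-\varphi(\one,\two)^4$ vanishes on $\stable$, hence --- again because $\stable$ is $3$-dimensional --- it vanishes on the connected component of $\{(\one,\two)\in\nbhd_0:\R(\one,\two)\in\nbhd_0\}$ containing $\invcirc$; after possibly shrinking $\nbhd_0$ so that this component is the whole set, we get (i).

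The main obstacle --- and the only place where the hypothesis is genuinely used --- is the first, local step: passing from ``$\stable$ is a real analytic Levi-flat hypersurface'' to a holomorphic straightening of its Levi foliation. Equivalently, one may avoid quoting the normal form and argue directly that the holomorphic motion $\eta(\two,x)$ is real analytic in $x\in\invcirc$ and then complexify in the angular variable: writing $\eta(\two,e^{is})=\sum_{n\in\mathbb Z}c_n(\two)e^{ins}$ with each $c_n$ holomorphic in $\two$ and $|c_n(\two)|\le Ce^{-\epsilon|n|}$ uniformly, one obtains $\Phi(w,\two)=\sum_n c_n(\two)w^n$ holomorphic on an annulus about $\invcirc$ times a disc in $\two$, and $\varphi$ is then the first coordinate of the inverse of the biholomorphism $(w,\two)\mapsto(\Phi(w,\two),\two)$, which is the identity on $\{\two=0\}$. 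Everything after the complexification step --- the globalisation via $\LLzero$, properties (ii) and (iii), and the identity-theorem argument for (i) --- is routine.
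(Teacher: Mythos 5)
Your proposal is correct and follows essentially the same route as the paper: use real analyticity plus Levi-flatness to extend the stable-disc foliation to a holomorphic foliation near $\invcirc$ (the paper cites Rea's Theorem where you sketch its proof via the local normal form), define $\varphi$ as the holomorphic projection to $\LLzero$ along leaves, read off (ii) and (iii), and propagate the functional equation (i) from $\stable$ by the identity principle. The only cosmetic difference is in that last step, where the paper applies the one-variable identity theorem on the slices $\{\two=\two_0\}$ while you use directly that a holomorphic function vanishing on a real hypersurface vanishes identically; both are valid.
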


\begin{remark}
The function $\varphi$ is analogous to the one constructed in the Proof of Theorem A.  However, Proposition \ref{b} only gives that $\varphi$
is defined on a small neighborhood of $\invcirc$, which may not be forward invariant under $\R$.
\end{remark}

We will exploit the fact that each $x \in B$ is hyperbolic, emitting a stable manifold $\WW^s_{\loc}(x)$ that is a one-dimensional holomorphic curve transverse to $\LLzero$. Together, the union of stable manifolds of each $x \in B$ forms a foliation of $\stable$; see \cite[Proposition 9.2]{blr1}.

The notion of Levi-flat real-codimension 1 hypersurfaces $\Sigma\subset \mathbb{C}^n$ will be useful; for background see \cite{krantz,nish}. A $C^2$ hypersurface $\Sigma$ is Levi flat if though each point of $\Sigma$ there is a complex codimension 1, holomorphic hypersurface.  The union of these hypersurfaces is called the {\em Levi foliation of $\Sigma$}.  Thus, the preceding paragraph shows that $\stable$ is Levi flat.  Note that there is another, more common but equivalent, definition of Levi-flat given in terms of vanishing an appropriate Levi $(1,1)$-form \cite[page 126]{krantz}.

Rea's Theorem \cite{REA} holds in any codimension, but here we need only

\begin{rea}
Suppose $\Sigma$ is a Levi-flat, \ra\ hypersurface defined on some open $\nbhd_0\subset\mathbb C^n$.  Then there is a neighborhood $\nbhd\subset\nbhd_0$ of $\Sigma$ to which the Levi foliation extends uniquely and holomorphically.
\end{rea}

\noindent
We omit the proof, as it is rather simple in this case.
%
%

\begin{proof}[Proof of Proposition \ref{b}]
As stated above, $\stable$ is foliated by a family $\mathcal F$ of holomorphic stable curves at each point in $\invcirc$, so it's Levi flat.  Since $\stable$ is assumed to be \ra, Rea's Theorem implies that this Levi foliation extends to be a complex analytic foliation in a neighborhood of $\stable$.  Since the foliation $\mathcal F$ is transverse to $\LLzero$ at points of $\invcirc$, in a small enough neighborhood $\tilde\nbhd$, each curve $\gamma_{x}$ of the foliation is transverse to $\LLzero$.  Then we may assume $\nbhd$ is the union of connected components in $\tilde\nbhd$ of any leaf that intersects $\tilde\nbhd\cap\{\one=0\}$.  Let $\varphi\colon\nbhd\rightarrow\mathbb C$ be the map assigning to each $(\one,\two)\in\nbhd$ the point where $\gamma_{(\one,\two)}$ intersects $\two=0$.  From this, (ii) and (iii) follow immediately.  Note that it follows from a change of coordinates and the Implicit Function Theorem that $\varphi$ is holomorphic.

Define $\Omega_0$ to be the connected component of $\R^{-1}(\nbhd)\cap\nbhd$ containing $\invcirc$.  For each $\two_0$ with $|\two_0|$ sufficiently small, let $\mathcal L_{\two_0}:=\{\two=\two_0\}$.  Observe that $\invcirc_{\two_0}:=\stable\cap\mathcal L_{\two_0}$ is a topological circle.  Since $\invcirc_{\two_0}\subset\stable$, $(i)$ holds on $\invcirc_{\two_0}$ and, by uniqueness properties of holomorphic functions, it holds in some open neighborhood of $\invcirc_{\two_0}$ within $\mathcal L_{\two_0}$.  Varying $\two_0$, these neighborhoods form an open neighborhood of $\invcirc$ contained in $\nbhd_0$ on which $(i)$ holds.  This property then extends to all of $\nbhd_0$, since $\nbhd_0$ is connected.
\end{proof}

We can suppose that the domain $\Omega_0$ on which $\varphi$ is defined, given by Proposition
\ref{b}, is sufficiently small, so that it is contained in $\WW^s(\B0) \cup
\WW^s_{\loc}(\fptwo) \cup \stable \cup \WW^s_{\loc}(\fp)$. Since $\invcirc$ has a local product structure, it is isolated in the recurrent set.  Proof of this is similar to \cite[Proposition 4.4]{pujals2008two}.  Thus, we can choose $\nbhd_0$ smaller if necessary so that each orbit enters and leaves $\nbhd_0$ at most once.

\begin{proposition}\label{omega}
The domain $\nbhd_0$ may be extended to $\nbhd$, a neighborhood of $\LLzero\setminus\{\fp,\fptwo\}$, such that $\varphi\colon\nbhd\rightarrow\mathbb C$ is holomorphic,
\begin{itemize}
\item[(i)] If $(\one,\two)\in\nbhd$ and $\R(\one,\two)\in\nbhd$, then $\varphi(\R(\one,\two))=\varphi(\one,\two)^4$,
\item[(ii)] $\stable=\{|\varphi(\one,\two)|=1\}$, and 
\item[(iii)] $\varphi(\one,0)=\one$ for $x\in\LLzero\setminus\{\fp,\fptwo\}$.
\end{itemize}
\end{proposition}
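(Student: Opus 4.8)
The plan is to push the function $\varphi$ forward under $\R$. In the coordinates $(\one,\two)$ centered at $\fp$ the map $\R|\LLzero$ is $\one\mapsto\one^4$, whose two fixed points on $\LLzero$ are $\fp$ (at $\one=0$) and $\B0$ (at $\one=\infty$, where $\R$ is moreover indeterminate), and these are the two points that must be removed. Since $\nbhd_0$ contains an annular neighborhood $\{\,1-\epsilon<|\one|<1+\epsilon,\ |\two|<\delta\,\}$ of $\invcirc$, and $\one\mapsto\one^4$ carries $\{1-\epsilon<|\one|<1\}$ onto $\{(1-\epsilon)^4<|\one|<1\}$ and $\{1<|\one|<1+\epsilon\}$ onto $\{1<|\one|<(1+\epsilon)^4\}$, the forward iterates of that annulus exhaust $\LLzero\setminus\{\fp,\B0\}$; and since $\LLzero$ is transversally superattracting away from $\B0$, the sets $\R^n(\nbhd_0)$ remain inside a fixed thin tube about $\LLzero\setminus\{\fp,\B0\}$ once $\nbhd_0$ has been shrunk. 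So I would put $\nbhd:=\bigcup_{n\ge0}\R^n(\nbhd_0)$, which is a neighborhood of $\LLzero\setminus\{\fp,\B0\}$ because $\R$ is open there, and define $\varphi$ on $\nbhd$ by the rule $\varphi(p):=\varphi(q)^{4^n}$ whenever $p=\R^n(q)$ with $q\in\nbhd_0$.

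The substance is that this rule is consistent and holomorphic; it is enough to treat one step, $\nbhd_0\rightsquigarrow\nbhd_0\cup\R(\nbhd_0)$, and iterate. Near $\LLzero\setminus\{\fp,\B0\}$ the map $\R$ is a finite branched cover, ramified exactly along $\LLzero$. For $p\in\R(\nbhd_0)$ let $q_1(p),\dots,q_m(p)$ be the preimages of $p$ lying in $\nbhd_0$, and form $P(p;T):=\prod_{j}\big(T-\varphi(q_j(p))^4\big)$; its coefficients, being symmetric functions of the finitely many values $\varphi(q_j(p))^4$, are single-valued and holomorphic on $\R(\nbhd_0)\setminus\LLzero$ and bounded near $\LLzero$, so by Riemann's removable-singularity theorem they are holomorphic on all of $\R(\nbhd_0)$. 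Wherever $p$ also lies in $\nbhd_0$, property (i) of Proposition \ref{b} gives $\varphi(q_j(p))^4=\varphi(p)$ for every $j$, so $P(p;T)=(T-\varphi(p))^m$ there; as $\R(\nbhd_0)$ is connected, the identity theorem forces $P(p;T)=(T-g(p))^m$ throughout, for a single holomorphic $g$ with $g=\varphi$ on the overlap. Hence $\varphi(q_j(p))^4=g(p)$ for all $j$, so setting $\varphi:=g$ on $\R(\nbhd_0)$ is consistent with both the old $\varphi$ and the pushforward rule. Iterating produces $\varphi$ on $\nbhd$.

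Properties (i)--(iii) are then bookkeeping. For (i): if $p=\R^n(q)$ with $q\in\nbhd_0$ and $\R(p)\in\nbhd$, then $\R(p)=\R^{n+1}(q)$, so $\varphi(\R(p))=\varphi(q)^{4^{n+1}}=\varphi(p)^4$. For (iii): given $(\one,0)\in\LLzero\setminus\{\fp,\B0\}$, choose $n$ large and $\tilde\one=|\one|^{1/4^n}e^{i\arg\one/4^n}$, so that $(\tilde\one,0)\in\nbhd_0$ and $\R^n(\tilde\one,0)=(\one,0)$; then $\varphi(\one,0)=\varphi(\tilde\one,0)^{4^n}=\tilde\one^{4^n}=\one$ by Proposition \ref{b}(iii). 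For (ii): $|\varphi(p)|=|\varphi(q)|^{4^n}$, so $|\varphi(p)|=1$ iff $|\varphi(q)|=1$ iff $q\in\stable$ (Proposition \ref{b}(ii)); since $\stable$ is forward invariant, $q\in\stable\Rightarrow p\in\stable$, and --- after shrinking $\nbhd_0$ so that each orbit enters and leaves it at most once --- the orbit segment joining $q$ to $p$ stays near $\invcirc$, which gives the converse. Hence $\stable=\{|\varphi|=1\}$ on all of $\nbhd$.

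The step I expect to be the main obstacle is the consistency in the second paragraph: since $\R$ is transversally non-injective (of degree $a=2$) and ramified along $\LLzero$ while the functional equation raises $\varphi$ to the fourth power, the prescription $\varphi(\R^n q):=\varphi(q)^{4^n}$ is a priori multivalued, and one must see the branches coalesce. The mechanism is that they already agree on the overlap with $\nbhd_0$ --- exactly where property (i) of Proposition \ref{b} is available --- and this propagates to the new region via the symmetric-polynomial plus identity-theorem argument, together with Riemann removability across the ramification locus $\LLzero$; the exhaustion of $\LLzero\setminus\{\fp,\B0\}$ by forward iterates, and the estimate keeping $\R^n(\nbhd_0)$ in a thin tube near $\fp$ and $\B0$, are then routine.
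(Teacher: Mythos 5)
Your overall strategy --- push $\varphi$ forward along $\R$, kill the multivaluedness by forming symmetric expressions in the values of $\varphi$ over a preimage fiber, extend across the bad locus by boundedness and Riemann removability, and pin the extension down by its agreement with $\varphi$ on the overlap with $\nbhd_0$ --- is the same mechanism the paper uses. But the specific way you form the fiber creates a genuine gap. You symmetrize over ``the preimages $q_1(p),\dots,q_m(p)$ of $p$ lying in $\nbhd_0$,'' and this collection is not a complete fiber of $\R$: as $p$ moves through $\R(\nbhd_0)$, a preimage can cross $\partial\nbhd_0$ while others stay inside, so the number $m$ and the multiset $\{\varphi(q_j(p))^4\}$ jump. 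Consequently the coefficients of $P(p;T)$ are not holomorphic --- not even continuous --- on $\R(\nbhd_0)\setminus\LLzero$, the local constancy of $m$ needed to write $P=(T-g)^m$ fails, and the identity-theorem step collapses. This is precisely what the paper's construction is designed to avoid: it sets $\nbhd_n:=\{x\colon\R^{-n}\{x\}\subseteq\nbhd_0\}$, so that the \emph{entire} fiber (all $8^n$ preimages, $\R$ having topological degree $8$) lies in the domain of $\varphi$; the symmetrized quantity there, the average $\tfrac{1}{8^n}\sum_i\varphi(y_i)^{4^n}$, is then single-valued (no monodromy), bounded, holomorphic off the critical value set of $\R^n$, and extends by the Riemann extension theorem. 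The price is that one must then check that $\bigcup_n\nbhd_n$ still reaches every point of $\LLzero\setminus\{\fp,\fptwo\}$, and that the definitions for different $n$ agree; the paper does the latter using the normalization that each orbit enters and leaves $\nbhd_0$ at most once --- a property you invoke only for (ii), but which is really needed for well-definedness of the extension itself.

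Two smaller inaccuracies: near $\invcirc$ the map $\R$ is not a finite branched cover ramified exactly along $\LLzero$ --- the vertical lines $z=\pm i$ through $(\pm i,0)\in\invcirc$ are collapsed to the point $(1,0)$ --- so the removability argument must be run across the full critical value set (as the paper does), not just across $\LLzero$; this is harmless since only boundedness of the symmetric functions is used. Also, your verifications of (i) and (iii) presuppose that the rule $\varphi(\R^n q)=\varphi(q)^{4^n}$ is already known to be single-valued, i.e.\ they rest on the step that has the gap; once well-definedness is secured by the complete-fiber device, those verifications and your argument for (ii) go through essentially as in the paper.
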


In general, the push-forward of a function by a mapping is not well-defined.  However, if the mapping is proper, then it is well-defined by averaging over the fibers.  It was shown in \cite[Sec. 4.5]{blr1} that $\R$ has topological degree eight.  In the proof of Proposition \ref{omega} below, we mimic this push forward under a proper mapping.

\begin{proof}
Let $\nbhd_{n}:=\{x\colon\R^{-n}\{x\}\subseteq\Omega_{0}\}$ and $C_n$ be the critical value set for $\R^n$.  For $x\in\Omega_{n}\setminus C_n$, we may define
\begin{equation}\label{extension}
\varphi(x)=\frac{1}{8^n}\sum_{i=1}^{8^n}\varphi(y_i)^4,
\end{equation}
where $\{y_i\}_{i=1}^{8^n}=\R^{-n}\{x\}$.  Then locally about each $x\in\nbhd_n\setminus C_n$, $\varphi$ is holomorphic since each branch of $\R^{-n}$ is holomorphic by the Inverse Function Theorem. If $x$ follows a nontrivial loop around $C_n$, then $\varphi(x)$ has no monodromy since we are averaging over all of the fibers in (\ref{extension}).  Moreover, since $|\varphi|$ is bounded on $\nbhd_0$, (\ref{extension}) implies $|\varphi|$ is also bounded on $\nbhd_n\setminus C_n$.  Therefore, by the Riemann Extension Theorem, $\varphi$ can be extended through the critical value curves to be holomorphic on all of $\nbhd_{n}$.

If $x\in\nbhd_n\cap\nbhd_m$ with $n\geq m\geq0$, then $\R^{-n}\{x\},\R^{-m}\{x\}\subset\nbhd_0$.  
Since any orbit enters and leaves $\nbhd_0$ at most once, for any $y_i\in\R^{-m}\{x\}$ and each $z_j\in\R^{m-n}\{y_i\}$, we have that $z_j,\R(z_j),\dots,\R^{n-m}(z_j)=y_i\in\nbhd_0$.  Thus, $\varphi(y_i)=\varphi(\R^{n-m}(z_j))=\varphi(z_j)^{4^{n-m}}$ since (i) holds on $\nbhd_0$.  This implies \[\frac{1}{8^m}\sum_{y_i\in\R^{-m}(x)}\varphi(y_i)^{4^m}\ =\ \frac{1}{8^n}\sum_{z_j\in\R^{-n}(x)}\varphi(z_j)^{4^n},\] 
so that the two definition of $\varphi$ agree in $\nbhd_n\cap\nbhd_m$.

We obtain a well-defined holomorphic function $\varphi$ on 
\begin{equation}
\nbhd_{\infty}:=\bigcup_{n=0}^{\infty}\nbhd_n.
\end{equation}
Then we define $\nbhd$ to be the connected component of $\R^{-1}(\nbhd_{\infty})\cap\nbhd_{\infty}$ containing $\invcirc$.  Now (i) holds on all of $\nbhd$ using the exactly the same proof as in Proposition \ref{b}.i.

Since $\LLzero$ is forward invariant, $\nbhd_0$ intersects $\LLzero$, and
$\R|\LLzero$ is $\one\mapsto\one^4$, it follows that $\nbhd$ contains
$\LLzero\setminus\{\fp,\fptwo\}$.  The fact that
$\stable=\{|\varphi(\one,\two)|=1\}$ also follows from the fact that
$\nbhd_0\subset\nbhd$.
\end{proof}

\subsection{Local Properties Near $\fp$.}

In order to study the geometry of the extended domain $\Omega$ and the properties of $\varphi$, several technical results about the dynamics near $\fp$ will be required.  We may choose $\epsilon>0$ sufficiently small so that the bidisk
\begin{equation}
\bidisk_{\epsilon}:=\{|\one|<\epsilon,|\two|<\epsilon\},
\end{equation} 
is forward invariant, and $\R$ strictly decreases each component in modulus.  We continue by describing the trajectory of orbits as they converge to $\fp$.

\begin{proposition}\label{c} If $\epsilon>0$ is sufficiently small, then for any $\gamma\in\mathbb Z_{+}$, if $(\one_0,\two_0)\in\bidisk_{\epsilon}\setminus\LLzero$, then $|\one_n|/|\two_n|^{\gamma}\rightarrow0$.
\end{proposition}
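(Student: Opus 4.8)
The plan is to control the ratio $|\one_n|/|\two_n|^\gamma$ by a direct iteration estimate on the map $\R$ in the coordinates centered at $\fp$ given by (\ref{r2}). Recall that in those coordinates $\R|\LLzero\colon \one \mapsto \one^4$ and $\R|\LLone\colon \two\mapsto \two^2$. The first coordinate of $\R$ is $\one^2\bigl((\one+2\two)/(1+\two^2)\bigr)^2$, so it vanishes to order $2$ in $\one$, while the second coordinate is $\two^2\bigl((1+(\two+\one)^2)/(1+\two^2)\bigr)^2$, which vanishes to order exactly $2$ in $\two$ and is nonvanishing when $\two \neq 0$ (for $\epsilon$ small). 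The key observation is the asymmetry of the exponents: $\one_{n+1}$ is comparable to $\one_n^2$ times a bounded factor, whereas $\two_{n+1}$ is comparable to $\two_n^2$ times a factor bounded away from $0$. More precisely, since $\bidisk_\epsilon$ is forward invariant with each component strictly decreasing in modulus, for $\epsilon$ small there are constants $0 < c < C$ with
\[
|\one_{n+1}| \le C\,|\one_n|^2\bigl(|\one_n| + |\two_n|\bigr)^{?}\quad\text{— more usefully }\quad |\one_{n+1}| \le C\,|\one_n|\cdot|\one_n|\bigl(|\one_n|+|\two_n|\bigr)/|1+\two_n^2| ,
\]
and $c\,|\two_n|^2 \le |\two_{n+1}| \le C\,|\two_n|^2$. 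I will extract from these that the ``logarithmic'' quantity $s_n := \log|\one_n| - \gamma\log|\two_n|$ satisfies a recursion of the form $s_{n+1} = 2 s_n + (\text{something that}\to -\infty)$ whenever $\one_n\neq 0$, forcing $s_n\to -\infty$.

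Concretely, here are the steps in order. First I would fix $\gamma$ and choose $\epsilon$ small enough that on $\bidisk_\epsilon$ the bounded factors above lie in $[c,C]$ for explicit constants, so that
\[
|\one_{n+1}| \le C^2\,|\one_n|^2\bigl(|\one_n|+2|\two_n|\bigr)^2/|1+\two_n^2|^2
\qquad\text{and}\qquad
|\two_{n+1}| \ge c^2\,|\two_n|^2 .
\]
Since $|\one_n|, |\two_n| < \epsilon$, the factor $(|\one_n| + 2|\two_n|)^2$ is at most $(3\epsilon)^2$; the point is that this is a small constant. Taking the ratio,
\[
\frac{|\one_{n+1}|}{|\two_{n+1}|^{\gamma}} \le \frac{C^2 (3\epsilon)^2}{c^{2\gamma}}\cdot \frac{|\one_n|^2}{|\two_n|^{2\gamma}}
= M \left(\frac{|\one_n|}{|\two_n|^{\gamma}}\right)^{2},
\]
where $M := C^2(3\epsilon)^2 c^{-2\gamma}$. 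Now choose $\epsilon$ yet smaller (depending on $\gamma$, which is legitimate since $\gamma$ is fixed in the statement) so that $M\epsilon^{?}$... — more carefully: set $r_n := |\one_n|/|\two_n|^{\gamma}$; I'd like $r_{n+1}\le M r_n^2$ to drive $r_n\to 0$. This happens as soon as $M r_0 < 1$, but $r_0$ need not be small a priori (if $|\two_0|$ is tiny). To handle that, I would instead track $r_n$ together with the fact that $|\one_n|\to 0$: since $|\one_{n+1}|\le C'|\one_n|^2$ with $C'<1$ (shrinking $\epsilon$), we get $|\one_n|\le (C'|\one_0|)^{2^n}/C'$ doubly-exponential decay, while $|\two_n|$ decays at most like $|\two_0|^{2^n}$ times a controlled factor, so $|\two_n| \ge c'\,|\two_0|^{2^n}$ for a constant $c' = c'(\two_0) > 0$ — here crucially $c' > 0$ because $\two_0\neq 0$ (this is where the hypothesis $(\one_0,\two_0)\notin\LLzero$ enters). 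Then
\[
r_n = \frac{|\one_n|}{|\two_n|^{\gamma}} \le \frac{1}{C'(c')^{\gamma}}\left(\frac{(C'|\one_0|)}{|\two_0|^{\gamma}}\right)^{2^n},
\]
wait — this requires $C'|\one_0|<|\two_0|^\gamma$, which again may fail. So the clean route is: iterate the relation $r_{n+1}\le M r_n^2$ but first observe that after finitely many steps $r_n$ becomes small. Indeed $r_{n+1}/r_n \le M r_n$; if $r_n$ stays $\ge 1/(2M)$ forever then $r_{n+1}\ge r_n$, contradicting... hmm, that's the wrong direction.

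The honest fix, and the step I expect to be the main obstacle, is the bookkeeping when $|\two_0|$ is extremely small so that $r_0$ is huge: one must show $r_n$ eventually decreases. The resolution is to note that we may first replace $(\one_0,\two_0)$ by $\R^k(\one_0,\two_0)$ for a suitable $k$: the orbit stays in $\bidisk_\epsilon\setminus\LLzero$ and it suffices to prove $r_n\to 0$ for the shifted orbit. And in fact the inequality $|\one_{n+1}|\le C^2\epsilon^2 c^{-2\gamma}\cdot$... — better yet, rather than the crude bound $(|\one_n|+2|\two_n|)^2\le 9\epsilon^2$, keep it as $\le 9\max(|\one_n|,|\two_n|)^2$. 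When $|\two_n|$ dominates, $(|\one_n|+2|\two_n|)^2 \le 9|\two_n|^2$, giving $|\one_{n+1}| \le 9C^2 |\one_n|^2|\two_n|^2/|1+\two_n^2|^2 \le 9C^4|\one_n|^2|\two_n|^2$, hence $r_{n+1} = |\one_{n+1}|/|\two_{n+1}|^\gamma \le 9C^4 |\one_n|^2 |\two_n|^2/(c^2|\two_n|^2)^{\gamma} = 9C^4 c^{-2\gamma}|\one_n|^2|\two_n|^{2-2\gamma}$. This is still awkward for $\gamma\ge 2$. The cleanest correct approach, which I would adopt, is to take logarithms and set $u_n = \log|\one_n|$, $v_n = \log|\two_n|$ (both negative, both $\to -\infty$), establish $u_{n+1}\le 2u_n + \log(9C^2)$ and $v_{n+1}\ge 2v_n - \log(1/c^2)$, so that $u_{n+1}-\gamma v_{n+1} \le 2(u_n - \gamma v_n) + \log(9C^2) + \gamma\log(1/c^2) =: 2(u_n-\gamma v_n) + A$. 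Writing $d_n := u_n-\gamma v_n$, we get $d_{n+1}\le 2d_n + A$, i.e. $d_{n+1} + A \le 2(d_n+A)$... which gives an \emph{upper} bound that grows — useless unless $d_0 + A < 0$. The genuine content must therefore come from the fact that $u_n\to-\infty$ \emph{faster} than linearly-in-$2^n$ is not enough against $\gamma v_n$; rather, one uses that $(\one_n+2\two_n)^2/(1+\two_n^2)^2 \to 0$, so the constant $A$ in the step $n$ is really $A_n := \log\bigl(C^2(|\one_n|+2|\two_n|)^2\bigr) + \gamma\log(1/c^2)\to -\infty$ as $n\to\infty$ (since $|\one_n|,|\two_n|\to 0$). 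Then $d_{n+1}\le 2d_n + A_n$ with $A_n\to-\infty$ does force $d_n\to-\infty$: pick $N$ with $A_n \le -|d_N|\cdot 2$ for $n\ge N$... — more simply, once $A_n \le -\delta < 0$ and also once $d_n\le 0$, we have $d_{n+1}\le 2d_n - \delta \le d_n - \delta$, driving $d_n\to-\infty$; and $d_n$ does become $\le 0$ because if $d_n > 0$ for all large $n$ then $d_{n+1}\le 2d_n + A_n$ with $A_n\to-\infty$ forces $d_{n+1}<d_n$ eventually, contradiction. This is the argument I would write out carefully; the main obstacle is precisely this monotonicity/termination bookkeeping, ensuring the "bad" regime ($d_n$ large positive, i.e. $|\two_n|$ astronomically small compared to $|\one_n|$) cannot persist, which it cannot because both coordinates tend to $0$ and the cross-term in the first coordinate therefore becomes negligible.
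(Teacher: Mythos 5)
Your final argument hinges on the claim that if $d_n:=\log|\one_n|-\gamma\log|\two_n|$ stayed positive forever, then $d_{n+1}\le 2d_n+A_n$ together with $A_n\to-\infty$ would ``force $d_{n+1}<d_n$ eventually.'' That inference is not valid, and it sits exactly at the step you yourself flag as the crux. From the recursion, $d_{n+1}<d_n$ only when $d_n<-A_n$, and nothing you establish prevents $d_n$ from growing in tandem with $-A_n$: for instance at a point with $|\one_n|=|\two_n|=s$ and $\gamma\ge 4$ one has $d_n\approx(\gamma-1)|\log s|$ while $-A_n\approx 2|\log s|$, and in fact $d_{n+1}\approx(2\gamma-4)|\log s|>d_n$, so $d$ genuinely increases at such a step. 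Mere divergence $A_n\to-\infty$ is not enough (if $A_n$ tended to $-\infty$ slowly, the recursion would permit $d_n\sim 2^n d_0$); what is needed is a \emph{rate}. The paper supplies it differently: as long as $|\one_n|\ge|\two_n|^{\gamma}$ one has $|\one_{n+1}|\le M_2|\one_n|^{2+2/\gamma}$ while $|\two_{n+1}|\ge M_1|\two_n|^{2}$, so $\log|\one_n|$ decays like $-(2+2/\gamma)^n$ against $\gamma\log|\two_n|\sim-\gamma 2^n$, and the strict gap in the bases forces the orbit out of that regime; once $|\one_m|\le|\two_m|^{\gamma}$, the paper passes to the coordinates $(\two,\blowup)$ with $\blowup=\one/\two^{\gamma}$, where the origin is superattracting, and concludes $\blowup_n\to0$. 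Your scheme could in fact be completed without the blow-up, but only by inserting the missing quantitative input: since $\log|\one_n|\le 2^n\log|\one_0|$ and $\log|\two_n|\le 2^n(\log|\two_0|+O(1))$ for small $\epsilon$, one gets $A_n\le -c\,2^n$ for large $n$, and unrolling $d_{n+1}\le 2d_n+A_n$ then gives $d_n\le 2^{\,n-N}d_N-c(n-N)2^{\,n-1}\to-\infty$. As written, neither this geometric bound on $A_n$ nor the paper's exponent-gap argument appears in your proposal, so the termination of the ``bad'' regime is unproved.

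Two smaller points. First, the proposition's quantifiers put $\epsilon$ before $\gamma$ (one $\epsilon$ must serve all $\gamma$, which matters for the later lemmas where $\gamma$ is chosen large after $\epsilon$ is fixed), so your explicit choice of $\epsilon$ depending on $\gamma$ --- defended as legitimate --- is a misreading; note that in the corrected version above the constants analogous to the paper's $M_1,M_2$ are independent of $\gamma$, and $\gamma$ enters only through an additive constant in $A_n$, so uniformity in $\gamma$ is attainable and should be made explicit. Second, the hypothesis $\two_0\neq0$ must be invoked to guarantee $\log|\two_n|$ is finite (as you do), but the several abandoned attempts preceding the final argument (the bound $r_{n+1}\le Mr_n^2$, the constant-$A$ recursion) should be removed, since they are acknowledged dead ends and obscure where the actual logical burden lies.
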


This proposition implies that any point near $\fp$ and not on $\LL_0$ converges to $\fp$ with an arbitrarily high degree of tangency to $\LL_1$.

\begin{proof}
We first prove the proposition when $|\one_0|\leq|\two_0|^{\gamma}$.  Let $\blowup_n := \one_n/\two_n^{\gamma}$, so that
\begin{eqnarray}
\blowup_{n+1} = \frac{\one_{n+1}}{\two_{n+1}^{\gamma}} &=&  \frac{\one_n^2}{\two_n^{2\gamma}}\left(\frac{(1+\two_n^2)^{\gamma-1}(\one_n+2\two_n)}{(1+(\two_n+\one_n)^2)^{\gamma}}\right)^2 \nonumber \\  &=& \blowup_n^2 \two_n^2 \left(\frac{(1+\two_n^2)^{\gamma-1}(2+\blowup_n\two_n^{\gamma-1})}{(1+\two_n(1+\blowup_n\two_n^{\gamma})^2)^{\gamma}}\right)^2. \label{gammaeqn}
\end{eqnarray}
In the $(\two,\blowup)$ coordinates, $(0,0)$ is a superattracting fixed point for $\R$.  Then there is a $\delta>0$ such that any point with $|\two|,|\blowup|<\delta$ is in $\WW^s((0,0))$.  The closed disk $\{\two=0,|\blowup|\leq1\}$ collapses to $(0,0)$.  By continuity, there exists $\epsilon>0$ such that 
\begin{equation}
\R(\{|\two|<\epsilon,|\blowup|\leq1+\epsilon\})\ \subset\ \{|\two|,|\blowup|<\delta\}\ \subset\ \WW^s((0,0)).
\end{equation}
Thus, for  $(\one_0,\two_0)\in\bidisk_{\epsilon}$ with $\epsilon>0$ sufficiently small, if $|\one_0|\leq|\two_0|^{\gamma}$, then the result follows.

Now it suffices to show that if $\two_0\neq0$, then  there is some $N\geq0$ so that $|\one_n|\leq|\two_n|^{\gamma}$ for any $n\geq N$.  Let
\begin{equation}
M_1=\displaystyle\min_{(\one,\two)\in\overline\bidisk_{\epsilon}}\left|\frac{1+(\two+\one)^2}{1+\two^2}\right|^2\mbox{\ \ and \ }
M_2=\displaystyle\max_{(\one,\two)\in\overline\bidisk_{\epsilon}}9\left|\frac{1}{1+\two^2}\right|^2.
\end{equation}
As long as $|\one_n|\geq|\two_n|^{\gamma}$, we have 
\begin{equation}
|\two_{n+1}|\geq M_1|\two_n|^2\mbox{\   and \ }|\one_{n+1}|\leq M_2|\one_n|^{2+2/\gamma}.
\end{equation}
This implies that 
\begin{equation}
|\two_n|\geq A_1\rho_1^{2^n}\mbox{\ \ and \ }|\one_n|\leq A_2\rho_2^{(2+2/\gamma)^n}
\end{equation}
for some $A_i>0$ and $0<\rho_i<1$.  Then
\begin{equation}
\frac{|\one_n|}{|\two_n|^{\gamma}}\ \leq\ \frac{A_2}{A_1}\frac{\rho_2^{(2+2/\gamma)^n}}{\rho_1^{\gamma2^n}}=A\rho_2^{(2+2/\gamma)^n-a\gamma2^n}\rightarrow0,
\end{equation}
where $\rho_1=\rho_2^a$ and $A=A_2/A_1$.  Thus, for some iterate $m$, we have $|\one_m|\leq|\two_m|^{\gamma}$.
\end{proof}

Consider the ``bullet-shaped'' regions $B_{\gamma,c}:=\{(\one,\two)\colon|\one|\geq c|\two|^{\gamma}\}$, and let $B_{\gamma}\equiv B_{\gamma,1}$.  We will use the following horizontal and vertical cones:
\begin{equation}
\K^h:=\{|\two|\leq|\one|\}\	\mbox{ and }\	\K^v:=\{|\two|\geq|\one|\},
\end{equation}
noting that $\K^h=B_1$.

\begin{corollary}\label{bullets_backward_invariant} If $\epsilon>0$ is sufficiently small, then for any $\gamma\in\mathbb Z_{+}$, 
$\R^{-1}(B_{\gamma})\cap\bidisk_{\epsilon}\subset B_{\gamma}$.
\end{corollary}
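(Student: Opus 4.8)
The plan is to show directly that if $(\one_0, \two_0) \in \bidisk_\epsilon$ and $(\one_1, \two_1) = \R(\one_0, \two_0) \in B_\gamma$, then $(\one_0, \two_0) \in B_\gamma$; equivalently, the contrapositive: if $|\one_0| < |\two_0|^\gamma$ (so that $(\one_0, \two_0) \notin B_\gamma$), then $|\one_1| < |\two_1|^\gamma$. I would work directly from the explicit formula \eqref{r2} for $\R$ in the $(\one, \two)$ coordinates, together with the fact (already recorded above, just before Proposition \ref{c}) that $\epsilon$ may be chosen so that $\bidisk_\epsilon$ is forward invariant and $\R$ strictly decreases each coordinate in modulus.

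The key computation is the ratio $|\one_1| / |\two_1|^\gamma$. Using \eqref{r2},
\[
\frac{|\one_1|}{|\two_1|^\gamma} \;=\; \frac{|\one_0|^2}{|\two_0|^{2\gamma}}\cdot\left|\frac{(1+\two_0^2)^{\gamma-1}(\one_0 + 2\two_0)}{(1+(\two_0+\one_0)^2)^\gamma}\right|^2 \;=\; \blowup_0^2\,|\two_0|^2\left|\frac{(1+\two_0^2)^{\gamma-1}(2 + \blowup_0 \two_0^{\gamma-1})}{(1+\two_0(1+\blowup_0\two_0^\gamma)^2)^\gamma}\right|^2,
\]
exactly as in the display \eqref{gammaeqn} in the proof of Proposition \ref{c}, where $\blowup_0 = \one_0/\two_0^\gamma$. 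By hypothesis $|\blowup_0| < 1$. On $\overline{\bidisk_\epsilon}$, by continuity and compactness the modulus of the bracketed quotient is bounded by some constant $C = C(\gamma)$ that is close to $|2|^2 = 4$ once $\epsilon$ is small (the $\two_0$-dependent factors tend to $1$ and $|2 + \blowup_0\two_0^{\gamma-1}| \le 2 + \epsilon^{\gamma-1}$ when $\gamma \ge 2$, while for $\gamma = 1$ the numerator factor is just $|\one_0 + 2\two_0|$, still controlled). Hence
\[
\frac{|\one_1|}{|\two_1|^\gamma} \;\le\; |\blowup_0|^2\,|\two_0|^2\,C \;<\; |\two_0|^2\,C,
\]
and since $|\blowup_0| < 1$ this is strictly less than $|\two_0|^2 C$; shrinking $\epsilon$ so that $\epsilon^2 C < 1$ — equivalently so that $|\two_0|^2 C < 1 \le |\blowup_0|^{-2}$ is not quite what we want, so instead we observe $|\two_1| \ge M_1 |\two_0|^2$ as in the proof of Proposition \ref{c} when $|\blowup_0|\le 1$... — more cleanly, we want $|\one_1| < |\two_1|^\gamma$, i.e. $|\one_1|/|\two_1|^\gamma < 1$, and the bound above gives $|\one_1|/|\two_1|^\gamma \le |\blowup_0|^2 |\two_0|^2 C < |\two_0|^2 C < \epsilon^2 C$, which is $< 1$ once $\epsilon$ is chosen with $\epsilon^2 C < 1$. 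Since $|\blowup_0|^2 < 1$ we even get strict inequality, so $(\one_1, \two_1) \notin B_\gamma$ unless already $\two_0 = 0$; but if $\two_0 = 0$ then $(\one_0, \two_0) \in \LLzero \subset B_\gamma$ trivially (as $|\one_0| \ge 0 = |\two_0|^\gamma$, with the convention handled by noting $B_\gamma$ contains $\LLzero$). This proves the contrapositive.

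The main obstacle is purely bookkeeping: making sure the constant $C$ can be taken uniform in $(\one_0, \two_0) \in \overline{\bidisk_\epsilon}$ and that the single choice of $\epsilon$ works for the fixed $\gamma$ under consideration — note the statement quantifies $\gamma$ after fixing $\epsilon$ only in the sense "there exists $\epsilon$ such that for all $\gamma$", so one should check that the quotient factor is bounded independently of $\gamma$, which it is: the denominator $|1 + \two_0(1+\blowup_0\two_0^\gamma)^2|^{2\gamma} \ge (1 - O(\epsilon))^{2\gamma}$ and the numerator $|1+\two_0^2|^{2(\gamma-1)} \le (1+\epsilon^2)^{2(\gamma-1)}$, so the ratio raised to the $2\gamma$ power stays bounded (by, say, $2$) provided $\epsilon$ is small enough that $(1+\epsilon^2)/(1-\epsilon)^2 \cdot \text{(stuff)}$ is controlled — one verifies $\epsilon$ can be chosen once and for all to dominate every $\gamma$ simultaneously because $(1+\epsilon^2)/(1-2\epsilon) < 1$ forces the geometric-type factors to be $\le 1$. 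Once that uniformity is in hand, the inequality $|\one_1|/|\two_1|^\gamma < |\two_0|^2 C \le \epsilon^2 C < 1$ closes the argument, and the corollary follows since $B_\gamma = B_{\gamma,1}$ and $\bidisk_\epsilon \cap \LLzero \subset B_\gamma$.
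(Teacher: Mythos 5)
Your overall route---the contrapositive together with the explicit formula (\ref{gammaeqn})---is the right computation, and it is essentially what the paper extracts from the proof of Proposition \ref{c}: in the coordinates $(\two,\blowup)$ with $\blowup=\one/\two^{\gamma}$, the map sends $\{|\two|<\epsilon,\,|\blowup|\le 1+\epsilon\}$ into $\{|\two|,|\blowup|<\delta\}$ with $\delta\le 1$, which is exactly the statement ``if $|\blowup_0|<1$ then $|\blowup_1|<1$'' that you are after. However, the one step you yourself single out as essential---that the constant $C$ bounding the bracketed quotient can be chosen independently of $\gamma$---is not verified correctly. You bound the numerator by $(1+\epsilon^2)^{2(\gamma-1)}$ and the denominator from below by $(1-O(\epsilon))^{2\gamma}$ \emph{separately}, and then assert uniformity ``because $(1+\epsilon^2)/(1-2\epsilon)<1$''; that inequality is false for every $\epsilon>0$ (the left side always exceeds $1$), and indeed these separate bounds only give a factor of the form $\bigl((1+\epsilon^2)/(1-O(\epsilon))\bigr)^{2\gamma}$, which diverges as $\gamma\to\infty$ for any fixed $\epsilon$. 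So the $C$ you construct is genuinely $\gamma$-dependent, the requirement $\epsilon^2C<1$ cannot be met by a single $\epsilon$, and your argument as written only yields the corollary with $\epsilon=\epsilon(\gamma)$, which is weaker than the stated quantifier order (and weaker than what the later applications need, e.g.\ in Proposition \ref{d}, where $\epsilon$ is fixed first and $\gamma$ is then taken large with $k$).

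The gap is repairable, but the repair must use the hypothesis $|\one_0|\le|\two_0|^{\gamma}$ \emph{inside} the $\gamma$-th power, not merely $|\one_0|,|\two_0|<\epsilon$. Writing $1+(\two_0+\one_0)^2=(1+\two_0^2)\Bigl(1+\frac{2\two_0\one_0+\one_0^2}{1+\two_0^2}\Bigr)$, the perturbation has modulus at most $4|\two_0|^{\gamma+1}$ for small $\epsilon$, so
\[
\left|\frac{1+\two_0^2}{1+(\two_0+\one_0)^2}\right|^{2\gamma}\ \le\ \bigl(1-4|\two_0|^{\gamma+1}\bigr)^{-2\gamma}\ \le\ \exp\bigl(16\,\gamma\,\epsilon^{\gamma+1}\bigr),
\]
and since $\sup_{\gamma\ge1}\gamma\,\epsilon^{\gamma+1}\le\epsilon^{2}$ for $\epsilon\le 1/2$, this factor is bounded by $2$ uniformly in $\gamma$ once $\epsilon$ is small. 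Combining this with $|\blowup_0|<1$ and $|\one_0+2\two_0|\le 3|\two_0|<3\epsilon$ gives $|\one_1|/|\two_1|^{\gamma}\le C_0\,\epsilon^{2}<1$ with $C_0$ an absolute constant, i.e.\ one choice of $\epsilon$ valid for all $\gamma$, which is what the corollary asserts; equivalently, one can simply quote the containment $\R\bigl(\{|\two|<\epsilon,\,|\blowup|\le1+\epsilon\}\bigr)\subset\{|\two|,|\blowup|<\delta\}$ from the proof of Proposition \ref{c}. With that substitution your proof closes; without it, the uniformity step fails.
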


\begin{corollary}\label{backwards_collapse} For any $\gamma\in\mathbb Z_{+}$, \label{p}$\displaystyle\bigcap_{n=0}^{\infty} \R^{-n}(B_{\gamma})\cap\bidisk_{\epsilon}=\LLzero\cap\bidisk_{\epsilon}$.
\end{corollary}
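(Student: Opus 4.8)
The plan is to prove the two inclusions of the asserted equality separately; the nontrivial one reduces immediately to Proposition \ref{c}.

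\emph{The inclusion $\supseteq$.} First note that $\LLzero \subset B_{\gamma}$, since any point of $\LLzero$ has $\two = 0$ and hence trivially satisfies $|\one| \geq 0 = |\two|^{\gamma}$. Because $\LLzero$ is invariant and $\bidisk_{\epsilon}$ is forward invariant, we have $\R^n(x) \in \LLzero \subset B_{\gamma}$ for every $x \in \LLzero \cap \bidisk_{\epsilon}$ and every $n \geq 0$; that is, $\LLzero \cap \bidisk_{\epsilon} \subset \R^{-n}(B_{\gamma})$ for all $n$. Intersecting over $n$ and with $\bidisk_{\epsilon}$ gives $\LLzero \cap \bidisk_{\epsilon} \subset \bigcap_{n=0}^{\infty} \R^{-n}(B_{\gamma}) \cap \bidisk_{\epsilon}$.

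\emph{The inclusion $\subseteq$.} Let $(\one_0,\two_0)$ lie in the left-hand side and suppose, for contradiction, that $\two_0 \neq 0$. By forward invariance of $\bidisk_{\epsilon}$ the whole orbit $(\one_n,\two_n) := \R^n(\one_0,\two_0)$ stays in $\bidisk_{\epsilon}$. From the formula \eqref{r2} for $\R$, shrinking $\epsilon$ if necessary so that $1 + \two^2$ and $1 + (\two+\one)^2$ remain close to $1$ on $\overline{\bidisk_{\epsilon}}$, one checks by induction that $\two_n \neq 0$ for all $n$. On the other hand, membership in $\bigcap_{n} \R^{-n}(B_{\gamma})$ means $(\one_n,\two_n) \in B_{\gamma}$, i.e. $|\one_n| \geq |\two_n|^{\gamma}$, for every $n$; since $\two_n \neq 0$ this says $|\one_n|/|\two_n|^{\gamma} \geq 1$ for all $n$. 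But $(\one_0,\two_0) \in \bidisk_{\epsilon} \setminus \LLzero$, so Proposition \ref{c} yields $|\one_n|/|\two_n|^{\gamma} \rightarrow 0$, a contradiction. Hence $\two_0 = 0$, i.e. $(\one_0,\two_0) \in \LLzero \cap \bidisk_{\epsilon}$.

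I expect no serious obstacle: essentially all of the content lies in Proposition \ref{c}, and the only points requiring a little care are (a) observing that $\LLzero$ is contained in every $B_{\gamma}$ and is forward invariant, so it survives all the preimages, and (b) ruling out $\two_n = 0$ along the orbit, so that the ratio $|\one_n|/|\two_n|^{\gamma}$ is defined and the condition $(\one_n,\two_n) \in B_{\gamma}$ translates into the clean lower bound $|\one_n|/|\two_n|^{\gamma} \geq 1$ that directly contradicts Proposition \ref{c}. (Corollary \ref{bullets_backward_invariant} additionally shows that the sets $\R^{-n}(B_{\gamma}) \cap \bidisk_{\epsilon}$ form a nested decreasing sequence, which makes the intersection transparent, but it is not needed for the argument above.)
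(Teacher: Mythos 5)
Your proof is correct and is exactly the argument the paper intends: the corollary is stated without proof as an immediate consequence of Proposition \ref{c} (points off $\LLzero$ eventually leave $B_{\gamma}$ since $|\one_n|/|\two_n|^{\gamma}\to 0$, while $\LLzero\cap\bidisk_{\epsilon}$ is forward invariant and lies in $B_{\gamma}$). Your extra care in noting $\two_n\neq 0$ along the orbit, so the ratio is defined, is a reasonable touch but matches the implicit reading of Proposition \ref{c}.
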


\begin{lemma}\label{nested_bullets_upper} For any sufficiently small $\epsilon>\sigma>0$ and any $\gamma\in\mathbb Z_{+}$, there exist $m\in\mathbb Z_{+}$ such that $\R^{-m}(B_{\gamma})\cap\left(\overline{\bidisk}_{\epsilon}\setminus\bidisk_{\sigma}\right)\subset\K^h$.
\end{lemma}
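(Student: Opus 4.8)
The plan is to recast the claimed inclusion as a disjointness statement and then run a nested-compactness argument based on the backward invariance of $B_\gamma$ (Corollary \ref{bullets_backward_invariant}) and the collapse of its iterated preimages onto $\LLzero$ (Corollary \ref{backwards_collapse}). Introduce
\[
S\ :=\ \bigl(\overline{\bidisk}_{\epsilon}\setminus\bidisk_{\sigma}\bigr)\setminus\K^h\ =\ \bigl\{(\one,\two)\in\overline{\bidisk}_{\epsilon}\ :\ \max(|\one|,|\two|)\ge\sigma,\ |\one|<|\two|\bigr\},
\]
and note that, since $\K^h=\{|\two|\le|\one|\}$, the assertion $\R^{-m}(B_\gamma)\cap(\overline{\bidisk}_{\epsilon}\setminus\bidisk_{\sigma})\subset\K^h$ is exactly $\R^{-m}(B_\gamma)\cap S=\emptyset$. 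The crucial geometric observation is that on $S$ the conditions $|\two|\ge|\one|$ and $\max(|\one|,|\two|)\ge\sigma$ together force $|\two|\ge\sigma$, so $\overline{S}$ is a compact subset of $\{|\two|\ge\sigma\}\cap\overline{\bidisk}_{\epsilon}$ and in particular $\overline{S}\cap\LLzero=\emptyset$. (I would first shrink $\epsilon$ so that the earlier results of this subsection — Corollaries \ref{bullets_backward_invariant} and \ref{backwards_collapse}, and the forward invariance and holomorphy of $\R$ — all remain valid on a bidisk slightly larger than $\overline{\bidisk}_{\epsilon}$, so that they can be applied on the closed bidisk; recall $\R$ is holomorphic on a neighborhood of $\overline{\bidisk}_{\epsilon}$ for $\epsilon<1$ and maps $\overline{\bidisk}_{\epsilon}$ into itself.)

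Next set $K_n:=\R^{-n}(B_\gamma)\cap\overline{\bidisk}_{\epsilon}$. Since $B_\gamma$ is closed and $\R^{n}$ is continuous, each $K_n$ is compact; using forward invariance together with Corollary \ref{bullets_backward_invariant} one gets $K_{n+1}\subseteq K_n$, so $(K_n)_{n\ge 0}$ is a decreasing sequence of compacta, and Corollary \ref{backwards_collapse} identifies $\bigcap_{n\ge0}K_n=\LLzero\cap\overline{\bidisk}_{\epsilon}$. Then $(\overline{S}\cap K_n)_{n\ge0}$ is a decreasing sequence of compact sets with $\bigcap_{n\ge0}(\overline{S}\cap K_n)=\overline{S}\cap\LLzero\cap\overline{\bidisk}_{\epsilon}=\emptyset$, so by the finite intersection property $\overline{S}\cap K_m=\emptyset$ for some $m\in\mathbb Z_{+}$. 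As $\overline{S}\subseteq\overline{\bidisk}_{\epsilon}$, this gives $\R^{-m}(B_\gamma)\cap\overline{S}=\emptyset$, hence $\R^{-m}(B_\gamma)\cap S=\emptyset$, which is the lemma.

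The only substantive point beyond bookkeeping is that the escape from $B_\gamma$ furnished \emph{pointwise} off $\LLzero$ by Proposition \ref{c} (equivalently, by Corollary \ref{backwards_collapse}) becomes \emph{uniform} over $S$; I expect this to be the main thing to justify carefully, and the justification is precisely that passing to the annular region $\overline{\bidisk}_{\epsilon}\setminus\bidisk_{\sigma}$ and deleting $\K^h$ turns $S$ into a set whose closure is compact and bounded away from $\LLzero$, so compactness supplies the uniformity for free. The remaining care is purely with the open/closed distinction for $\bidisk_{\epsilon}$, which the preliminary shrinking of $\epsilon$ dispenses with; I do not anticipate any real obstacle.
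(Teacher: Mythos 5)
Your proposal is correct and takes essentially the same route as the paper: both reduce the lemma to showing that the compact part of $\overline{\bidisk}_{\epsilon}\setminus\bidisk_{\sigma}$ lying outside $\K^h$ (hence bounded away from $\LLzero$) cannot meet $\R^{-m}(B_{\gamma})$ for $m$ large, using the backward invariance from Corollary \ref{bullets_backward_invariant} and the escape from $B_{\gamma}$ given by Proposition \ref{c}/Corollary \ref{backwards_collapse}, combined with compactness. The only difference is packaging: the paper runs the compactness step as a finite subcover for forward images of $\left(\overline{\bidisk}_{\epsilon}\setminus\bidisk_{\sigma}\right)\cap\K^v$, while you run it as the finite intersection property for the nested compacta $K_n=\R^{-n}(B_{\gamma})\cap\overline{\bidisk}_{\epsilon}$ — the same argument in dual form.
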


\begin{proof}
Consider the compact set $K:=\left(\overline{\bidisk}_{\epsilon}\setminus\bidisk_{\sigma}\right)\cap\K^v$.  It suffices to prove that there exists $m\in\mathbb Z_{+}$ such that $\R^m(K)\subset\bidisk_{\epsilon}\setminus B_{\gamma}$.  By the proof of Proposition 1.6, for each $x\in K$, there exists $m_x$ such that for any $m\geq m_x$, $\R^{m}x\in\bidisk_{\epsilon}\setminus B_{\gamma}$, which is open.  Then there is an open neighborhood $U_x$ of $x$ such that $\R^{m_x}(U_x)\subset\bidisk_{\epsilon}\setminus B_{\gamma}$.  Since $K$ is compact, there exists $m$ such that for any $x\in K$, $\R^m(x)\in\bidisk_{\epsilon}\setminus B_{\gamma}$.
\end{proof}

Recall that $\R|\LLzero$ is $\one\mapsto\one^4$ and $\R|\LLone$ is $\two\mapsto\two^2$.  The following distortion estimates allow local approximation of these properties near $\fp$. Also, recall the notation $(\one_n,\two_n)=\R^n(\one_0,\two_0)$.  Lastly, given two sequences $x_n$ and $y_n$, we will use $x_n\asymp y_n$ to mean that $a\leq|x_n/y_n|\leq A$ for some constants $0<a<A$.

\begin{proposition}\label{DISTORTION} For $\epsilon>0$ sufficiently small and any $\gamma\geq1$,
\begin{itemize}
\item[(i)] If $(\one_i,\two_i)\in B_{\gamma}\cap\bidisk_{\epsilon}$ for $i=0,\dots,n$, then $|\one_n|\asymp|\one_0|^{4^n}$.
\item[(ii)] If $(\one_i,\two_i)\in \bidisk_{\epsilon}\setminus B_{\gamma}$ for $i=0,\dots,n$, then $|\two_n|\asymp|\two_0|^{2^n}$.
\end{itemize}
\end{proposition}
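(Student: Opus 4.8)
The plan is to establish both parts by tracking the relevant coordinate along the orbit and controlling the multiplicative distortion produced by the ``error factors'' appearing in the formula (\ref{r2}) for $\R$. The point is that on the invariant lines the dynamics is exactly $\one \mapsto \one^4$ (on $\LLzero$) and $\two \mapsto \two^2$ (on $\LLone$), and inside the appropriate region the off-line correction terms are uniformly bounded away from $0$ and $\infty$, so after taking logarithms the accumulated distortion is a convergent telescoping sum.

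For part (i): suppose $(\one_i,\two_i) \in B_\gamma \cap \bidisk_\epsilon$ for $i = 0,\dots,n$. From (\ref{r2}) we have
\[
\one_{n+1} = \one_n^2 \left(\frac{\one_n + 2\two_n}{1+\two_n^2}\right)^2,
\]
so iterating gives $\one_n = \one_0^{2^n} \prod_{i=0}^{n-1} \left(\frac{\one_i+2\two_i}{1+\two_i^2}\right)^{2^{n-i}}$. Wait — this is not quite $\one_0^{4^n}$; I need to recompute the exponent bookkeeping: since each step squares $\one$, after $n$ steps the exponent on $\one_0$ is $2^n$, not $4^n$. So the claim $|\one_n| \asymp |\one_0|^{4^n}$ must be using that on $B_\gamma$ we additionally have $|\two_n|$ small enough that $\one_n \asymp \one_n^{\LLzero\text{-value}}$, where the relevant comparison is with the honest fourth-power dynamics. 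Let me instead argue directly on $\LLzero$: on $\LLzero$, $\R$ is $\one\mapsto\one^4$ (this is the stated restriction, reflecting the higher $b$), and the factor $\left(\frac{\one_n+2\two_n}{1+\two_n^2}\right)^2$ should be compared to the value it takes on $\LLzero$, namely with $\two_n = 0$ it equals $\one_n^2$ — indeed $(\one_n + 2\cdot 0)^2 = \one_n^2$ — so $\one_{n+1}|_{\LLzero} = \one_n^2 \cdot \one_n^2 = \one_n^4$, consistent. Hence off $\LLzero$ we write $\one_{n+1} = \one_n^4 \cdot E_n$ where $E_n = \left(\frac{1 + 2\two_n/\one_n}{1+\two_n^2}\right)^2$, and on $B_\gamma \cap \bidisk_\epsilon$ we have $|\two_n/\one_n| \le |\two_n|^{1-\gamma}$... no: $B_\gamma = \{|\one|\ge|\two|^\gamma\}$ gives $|\two_n/\one_n|\le |\two_n|/|\two_n|^\gamma = |\two_n|^{1-\gamma}$, which blows up for $\gamma>1$. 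So the correct estimate uses instead that $|\two_n| < \epsilon$ and $|\one_n|\ge |\two_n|^\gamma$, giving $|2\two_n/\one_n| \le 2|\two_n|/|\two_n|^\gamma$ only when $|\two_n|\le |\one_n|^{1/\gamma}$ — I should bound $|2\two_n/\one_n|$ directly: on $B_\gamma\cap \bidisk_\epsilon$, shrinking $\epsilon$, one shows $|2\two_n/\one_n|<1/2$, because either $|\two_n|\le |\one_n|$ (inside $\K^h$, fine) or we use the $\gamma$-tangency. The cleanest route: use Corollary \ref{bullets_backward_invariant} and Proposition \ref{c} to see that on $B_\gamma\cap\bidisk_\epsilon$ the ratio $|\two_n|/|\one_n|$ stays small, so $1 \le |E_n| \le C$ with $C$ close to $1$; then $\log|\one_n| = 4^n \log|\one_0| + \sum_{i=0}^{n-1} 4^{n-1-i}\log|E_i|$, and since $|\log|E_i||$ is bounded while $\sum 4^{-i}$ converges, the sum is $O(4^n)$... actually it is $O(4^n)$ which is too weak — it needs to be $O(1)\cdot 4^n$ absorbed, i.e. we need $\sum_{i=0}^{n-1} 4^{n-1-i}\log|E_i|$ to be bounded \emph{independently of $n$}. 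This requires $|\log|E_i||$ to decay geometrically, i.e. $|E_i - 1| = O(\kappa^i)$ for some $\kappa<1$, which follows from $|\two_i/\one_i| \to 0$ geometrically (Proposition \ref{c}'s proof gives super-geometric decay on $B_\gamma$, since there $|\one_i|$ shrinks like $\rho^{4^i}$ and $|\two_i|$ also shrinks appropriately). Summing the geometrically-decaying tail against the $4^{n-1-i}$ weights still gives a bounded quantity because the dominant terms are $i$ near $n$, where $\log|E_i|$ is exponentially small in $4^i$. This yields $|\one_n|\asymp |\one_0|^{4^n}$.

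For part (ii): the argument is the mirror image on $\LLone = \{\one = 0\}$, where $\R$ is $\two \mapsto \two^2$. From (\ref{r2}), $\two_{n+1} = \two_n^2 \left(\frac{1+(\two_n+\one_n)^2}{1+\two_n^2}\right)^2 = \two_n^2 \cdot F_n$ with $F_n \to 1$ when $|\one_n| \to 0$; on $\bidisk_\epsilon \setminus B_\gamma$ (i.e. $|\one_n| < |\two_n|^\gamma$) with $\gamma \ge 1$, we get $|\one_n| < |\two_n|$, so $|\one_n|$ is small, and moreover Proposition \ref{c} (applied in the complementary region) shows $|\one_n|/|\two_n| \to 0$ super-geometrically, hence $|F_n - 1| = O(|\one_n|) \to 0$ geometrically in $2^i$. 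Then $\log|\two_n| = 2^n \log|\two_0| + \sum_{i=0}^{n-1} 2^{n-1-i}\log|F_i|$, and the same summation-against-geometric-weights argument gives that the correction is bounded uniformly in $n$, so $|\two_n| \asymp |\two_0|^{2^n}$.

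The main obstacle is the exponent bookkeeping I stumbled through above: one must verify that the accumulated distortion $\sum 4^{n-1-i}\log|E_i|$ (resp.\ $\sum 2^{n-1-i}\log|F_i|$) is bounded \emph{uniformly in $n$}, not merely $o(4^n)$. This needs the error factors to approach $1$ fast enough to beat the growing weights $4^{n-1-i}$, which in turn requires the super-geometric decay of $|\two_i/\one_i|$ on $B_\gamma$ (resp.\ $|\one_i/\two_i|$ off $B_\gamma$) — precisely the content that is extractable from the proof of Proposition \ref{c} together with the distortion estimates there ($|\two_n| \ge A_1 \rho_1^{2^n}$, $|\one_n| \le A_2 \rho_2^{(2+2/\gamma)^n}$ and their analogues). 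Once that uniform bound is in hand, exponentiating gives the two-sided comparison $\asymp$ with constants depending only on $\epsilon$ and $\gamma$, not on the orbit or on $n$. Everything else is bounded-distortion bookkeeping from the explicit formula (\ref{r2}) and the regions defined via Corollaries \ref{bullets_backward_invariant} and \ref{backwards_collapse}.
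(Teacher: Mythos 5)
Your skeleton agrees with the paper's: write $|\one_n|=\bigl(\prod_{i=0}^{n-1}|E_i|^{4^{n-1-i}}\bigr)|\one_0|^{4^n}$ and prove that the weighted sum of the $\log|E_i|$ is bounded uniformly in $n$, and you correctly identify this uniform bound as the crux. But the mechanism you propose for it rests on a false claim and on a mis-identification of the dominant terms. On $B_\gamma$ the ratio $|\two_i/\one_i|$ does \emph{not} tend to $0$ along forward orbits: while the segment stays in $B_\gamma$ one has $|\two_{i+1}|\geq M_1|\two_i|^2$ but $|\one_{i+1}|\leq M_2|\one_i|^{2+2/\gamma}$ (the constants from Proposition~\ref{c}), so $\one$ contracts strictly faster than $\two$ and $|\two_i/\one_i|$ \emph{grows}, doubly exponentially, forward in time; the inequalities you quote from Proposition~\ref{c} ($|\two_n|\geq A_1\rho_1^{2^n}$, $|\one_n|\leq A_2\rho_2^{(2+2/\gamma)^n}$) bound $|\two_n/\one_n|$ from \emph{below}, not above. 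Moreover, in $\sum_{i=0}^{n-1}4^{n-1-i}\log|E_i|$ the heavy weights sit at the early times $i$ near $0$ (weight about $4^{n-1}$), not ``at $i$ near $n$'' as you assert, so forward decay of $|E_i-1|$, however fast, cannot yield a bound uniform in $n$: the $i=0$ term alone is of size $4^{n-1}|E_0-1|$. The missing idea, which is the heart of the paper's proof, is to quantify smallness in terms of the number of \emph{remaining} iterates: writing $A_i$ for the distortion $i$ steps before the endpoint, $A_i\leq 1+5|\two_{n-i}/\one_{n-i}|$, and since the ratio grows at the definite rate above while the segment remains in $B_\gamma\cap\bidisk_\epsilon$, its boundedness at the final time $n$ forces $5|\two_{n-i}/\one_{n-i}|\leq\delta^{(1+1/\gamma)^i}$. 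This double-exponential decay in the backward index $i$ beats the weights $4^{i-1}$, so $\prod_i A_i^{4^{i-1}}$ is bounded uniformly in $n$. Without this backward propagation from the endpoint your part (i) does not close.

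Part (ii) inherits the same structural problem: the heavy weights $2^{n-1-i}$ again sit at $i=0$, where $|F_0-1|\approx|\one_0|\,(|\one_0|+2|\two_0|)$ need not be small, so the ``same summation against geometric weights'' does not give a correction bounded uniformly in $n$. In fact no argument can: for an orbit with $\one_0\neq0$ that stays in $\bidisk_\epsilon\setminus B_\gamma$ for all time (such orbits exist, converging to $\fp$ tangentially to $\LLone$), the single factor $|F_0|^{2^{n-1}}$ already contributes $\exp(\Theta(2^n))$ to $|\two_n|/|\two_0|^{2^n}$. Your forward-decay observations in this region are correct ($|\one_i|$ does decay doubly exponentially forward in time off $B_\gamma$), and what they legitimately yield is the base-level comparison $(a|\two_0|)^{2^n}\leq|\two_n|\leq(A|\two_0|)^{2^n}$, which is the form of vertical control that the later arguments (e.g.\ the chain of inequalities in the proof of Proposition~\ref{u}) actually require; but you cannot claim, as written, a uniformly bounded accumulated vertical distortion.
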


\begin{proof}
Let
\begin{eqnarray}
A_i\ =\ \frac{1}{|\one_{n-i}|^2}\left|\frac{\one_{n-i}+2\two_{n-i}}{1+\two_{n-i}^2}\right|^2\ \leq\ 1+5\left|\frac{\two_{n-i}}{\one_{n-i}}\right|,
\end{eqnarray}
so that $|\one_{n-i+1}|=A_i|\one_{n-i}|^4$.  Inductively, we have
\begin{equation}\label{distortion constants}
|\one_n|\ =\ \left(\prod_{i=1}^nA_{i}^{4^{i-1}}\right)|\one_0|^{4^n}.
\end{equation}
Recall the constants $M_1\leq1\leq M_2$ from the proof of Proposition \ref{c}, which are independent of $\gamma$.  We have $|\two_{n}|\geq(M_1|\two_{n-i}|)^{2^i}$ and $|\one_{n}|\leq (M_2|\two_{n-i}|)^{(2+2/\gamma)^i}$, so it follows that 
\begin{eqnarray}
\left|\frac{\two_{n}}{\one_{n}}\right|\geq\frac{M_1^{2^i}}{M_2^{(2+2/\gamma)^i}}\left|\frac{\two_{n-i}}{\one_{n-i}^{(1+1/\gamma)^i}}\right|^{2^i}.
\end{eqnarray}
This implies there is a $0<\delta<1$ such that
\begin{eqnarray}
5\left|\frac{\two_{n-i}}{\one_{n-i}}\right|\ \leq\ 5(M_2|\one_{n-i}|)^{(1+1/\gamma)^i-1}\frac{M_2}{M_1}\left|\frac{\two_n}{\one_n}\right|^{1/2^i}\ \leq\ \delta^{(1+1/\gamma)^i},
\end{eqnarray}
since $M_2$ is a fixed constant, $|\one_{n-i}|<\epsilon$, and we can choose $\epsilon$ as small as we like.

It suffices to find uniform constants to estimate the product $\prod_{i=1}^{n}A_i^{4^{i-1}}$ independent $n$.  Observe 
\begin{eqnarray}
\prod_{i=1}^nA_{i}^{4^{i-1}}\ \leq\ \prod_{i=1}^n\left(1+5\left|\frac{\two_{n-i}}{\one_{n-i}}\right|\right)^{4^{i-1}}\ \leq\ \prod_{i=1}^{\infty}\left(1+\delta^{(1+1/\gamma)^i}\right)^{4^{i-1}},
\end{eqnarray}
where the last product converges since
\begin{eqnarray}
\sum_{i=1}^{\infty}4^{i-1}\log\left(1+\delta^{(1+1/\gamma)^i}\right)
\end{eqnarray}
converges.  Thus, there is a constant $A$ such that for any $n$, $\prod_{i=1}^nA_{i}^{4^{i-1}}\leq A$.   

A similar calculation can be done to find a uniform lower bound for the product.  Moreover, the proof for the vertical distortion control is similar (and easier).
\end{proof}

Consider $\R^{\ast}(\LLone)$, the pullback of the curve $\LLone=\{T=Y\}$, given by
\begin{equation}
-Z^2(T-Y)^2(T+Y)^2 = 0.
\end{equation}
The pullback of $\LLone$ contains $\LLone$, $\{Z=0\}$, and $\{T+Y=0\}$ (each counted with multiplicity two).  Call this last curve $D$, so in $(\one,\two)$ coordinates,
\begin{eqnarray}\label{q}
D&:=&\{\one+2\two=0\}.
\end{eqnarray}

\begin{figure}[ht]
\begin{center}
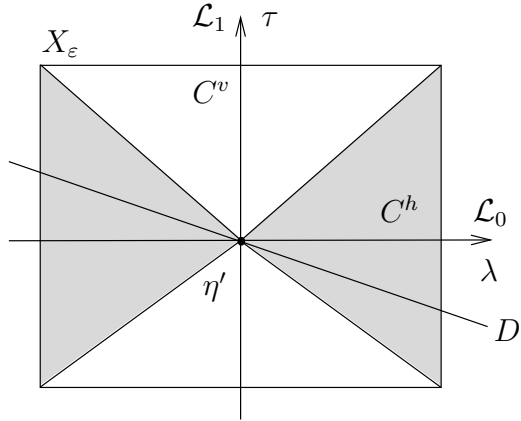
\end{center}
\caption{Bidisk neighborhood of $\fp$}%
\label{bidisk}
\end{figure}

\begin{lemma}\label{horizontal_history}  If $x\in\bidisk_{\epsilon}\setminus B_{3}$ and $\epsilon$ is sufficiently small, then  $\R^{-1}\{x\}\cap\K^h\neq\emptyset$ and $\R^{-1}\{x\}\cap\K^v\neq\emptyset$.
\end{lemma}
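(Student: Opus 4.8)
The plan is to locate, among the (generically eight) points of $\R^{-1}\{x\}$, one in $\K^h$ and one in $\K^v$, using the structure of $\R^{\ast}(\LLone)$. If $\one_0=0$ then $x\in\LLone$ and, for either square root $\two_\ast$ of $\two_0$, the points $(0,\two_\ast)\in\LLone\subset\K^v$ and $(-2\two_\ast,\two_\ast)\in D\subset\K^h$ map to $x$; so assume $\one_0\neq0$. Recall that as a set $\R^{-1}(\LLone)=\LLone\cup D\cup\{Z=0\}$, with $\{Z=0\}$ the line at infinity of the $(\one,\two)$-chart (hence irrelevant for finite $x$), and that $\LLone$ and $D$ each occur in $\R^{\ast}(\LLone)$ with multiplicity two; thus near a generic point of $\LLone$ (resp.\ of $D$) the map $\R$ is two-to-one onto a neighborhood of its image point on $\LLone$, ramified along $\LLone$ (resp.\ along $D$). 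Substituting $\one=-2\two$ into (\ref{r2}) confirms $\R(D)=\LLone$ with $\R|D\colon\two\mapsto\two^2$, exactly like $\R|\LLone$. Since $\LLone\subset\K^v$ and, on $D$, $|\one|=2|\two|$ so that any point within distance $<|\two|$ of $D$ still lies in $\K^h$, it is enough to produce a preimage of $x$ sufficiently near $\LLone$ and one sufficiently near $D$. Finally, $x\notin B_3$ forces $\two_0\neq0$, and $|\one_0|<|\two_0|^3$ says $x$ is very close, at the scale $|\two_0|$, to $p:=(0,\two_0)\in\LLone$.

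For the $\K^v$-preimage, fix a square root $\two_\ast$ of $\two_0$, so $(0,\two_\ast)$ is one of the two $\LLone$-preimages of $p$. Since $\LLone$ is invariant and transversally superattracting of degree $2$ with $\R|\LLone\colon\two\mapsto\two^2$, Lemma \ref{fns} gives, near $(0,\two_\ast)$, the local form $\R(\one,\two)=(\one^2 u(\one,\two),\ \two^2+\one\,r(\one,\two))$ with $u,r$ holomorphic and $u(0,\two)=4\two^2/(1+\two^2)^2$. Solve the second equation $\two^2+\one\,r(\one,\two)=\two_0$ for $\two=\Theta(\one)$ near $\two_\ast$ by the implicit function theorem, and then apply Rouch\'e's theorem to $\one\mapsto\one^2 u(\one,\Theta(\one))-\one_0$: this yields two solutions of $\R(\one,\two)=x$ near $(0,\two_\ast)$ with $|\one|=(1+o(1))\sqrt{|\one_0|/|u(0,\two_\ast)|}=(1+o(1))\,\sqrt{|\one_0|}/(2|\two_\ast|)$ and $|\two|=(1+o(1))|\two_\ast|$. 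Because $|\one_0|<|\two_0|^3<4|\two_0|^2=4|\two_\ast|^4$, these satisfy $|\one|<|\two|$ once $\epsilon$ is small, hence lie in $\K^v$.

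The $\K^h$-preimage is obtained the same way with $D$ in place of $\LLone$. In the coordinate $\delta:=\one+2\two$ one has $D=\{\delta=0\}$, and from (\ref{r2}) the first component of $\R$ equals $\delta^2\,\tilde u(\delta,\two)$ with $\tilde u(0,\two)=4\two^2/(1+\two^2)^2$, while the second component restricted to $\delta=0$ equals $\two^2$, with nonvanishing $\two$-derivative there. For a square root $\two_\ast$ of $\two_0$, the same implicit-function-plus-Rouch\'e argument produces two solutions of $\R(\one,\two)=x$ near $(-2\two_\ast,\two_\ast)\in D$, with $|\delta|=(1+o(1))\,\sqrt{|\one_0|}/(2|\two_\ast|)$ and $\two$-coordinate $(1+o(1))\two_\ast$. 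As before $|\one_0|<4|\two_0|^2$ gives $|\delta|<|\two|$, so $|\one|=|\delta-2\two|\ge 2|\two|-|\delta|>|\two|$, placing the solution in $\K^h$.

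The delicate point — and, I expect, the main obstacle — is the \emph{uniformity} of these two perturbation arguments as $x\to\fp$: the unit $u(0,\two_\ast)=4\two_\ast^2/(1+\two_\ast^2)^2$ degenerates like $\two_\ast^2$, and the base points $(0,\two_\ast)$, $(-2\two_\ast,\two_\ast)$ both tend to $\fp$, where $\LLone$ and $D$ cross. I would dispose of this by an anisotropic rescaling of the source near $\fp$, tuned to the expected position and size of the preimage being constructed — the coordinate transverse to the relevant curve scaled by $(\one_0/\two_0)^{1/2}$ and the tangential one by $\two_0^{1/2}$. In these coordinates the leading part of $\R$ becomes a fixed, non-degenerate quadratic model, and the errors are $O(|\two_0|)+O(|\one_0|/|\two_0|^2)$, both small by $|\one_0|<|\two_0|^3$ and $|\two_0|<\epsilon$; the implicit function theorem and Rouch\'e's theorem then apply with constants independent of $x$, which is precisely what is needed for the two arguments above to work for every $x\in\bidisk_\epsilon\setminus B_3$.
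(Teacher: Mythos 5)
Your proposal is correct, and it reaches the conclusion by a more hands-on route than the paper, although both arguments rest on the same geometric fact: inside the affine chart, $\R^{-1}(\LLone)=\LLone\cup D$, with $\LLone\subset\K^v$ and $D$ together with a proportional neighborhood of it inside $\K^h$. The paper foliates the region $\{|\one|<\tfrac12|\two|^2\}\supset\bidisk_{\epsilon}\setminus B_3$ by the parabolas $P_c=\{\one=c\two^2\}$, $|c|\le\tfrac12$, pulls back the curve through $x$, and notes that for $\epsilon$ small every point of $\R^{-1}(P_c)$ satisfies $\left|\one/\two\right|\cdot|\one+2\two|/|\two|\le1$; hence the preimage curve has a branch tangent to $\LLone$ (lying in $\K^v$) and a branch tangent to $D$ (lying in $\K^h$), and $x$ acquires a preimage on each. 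You instead fix $x$ and construct the preimages directly, via the normal forms of $\R$ at $(0,\two_{\ast})$ and $(-2\two_{\ast},\two_{\ast})$ together with the implicit function theorem and Rouch\'e, obtaining $|\one|\approx\sqrt{|\one_0|}\,/(2|\two_{\ast}|)\ll|\two_{\ast}|$ near $\LLone$ and $|\delta|\ll|\two|$ near $D$, which gives the cone memberships with a definite margin. What each approach buys: the paper's curve-pullback argument is shorter and needs no uniform perturbation estimates, but it leaves implicit that each branch of $\R^{-1}(P_c)$ maps onto a neighborhood of $x$ within $P_c$ --- exactly the surjectivity your Rouch\'e step makes explicit; your argument is fully quantitative (it even locates the preimages), at the price of the uniformity question as $x\to\fp$, which you only sketch. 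The sketch is sound: since $|\one_0|<|\two_0|^3$ gives $\sqrt{|\one_0|}\,/|\two_{\ast}|<|\two_{\ast}|^2$, the perturbations of the model $\one^2u(0,\two_{\ast})-\one_0$ are of relative size $O(|\two_{\ast}|)$ even though the unit $u(0,\two_{\ast})\sim4\two_{\ast}^2$ degenerates, so the implicit function theorem and Rouch\'e apply with constants uniform in $x$ (your anisotropic rescaling is one way to package this; direct estimates suffice). One small slip: the normal form near $(0,\two_{\ast})$ does not follow from Lemma \ref{fns}, which concerns the setting of Theorem A', but it is immediate from (\ref{r2}), as you in fact compute.
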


\begin{proof}
Let $N:=\{|\one|<\frac{1}{2}|\two|^2\}$, and note that if $x\in\bidisk_{\epsilon}\setminus B_3$, then $x\in N\cap\bidisk_{\epsilon}$.  Suppose $x\in N\cap\bidisk_{\epsilon}$ and let $(\one,\two)\in\R^{-1}\{x\}$.  Recall that the line $D:=\{\one+2\two=0\}$ has $\R(D)=\LLone$.  Also, note that $N$ is the union over $|c|\leq1/2$ of the curves $P_c:=\{\one=c\two^2\}$, and the preimage of any of these curves, $\R^{-1}(P_c)$, is the set of points satisfying
\begin{eqnarray}
\one^2\left(\frac{\one+2\two}{1+\two^2}\right)^2\ =\ c\two^4\left(\frac{1+(\one+\two)^2}{1+\two^2}\right)^4.
\end{eqnarray}
It follows that if $\epsilon>0$ is small enough that $\left|\sqrt c\frac{(1+(\one+\two)^2)^2}{1+\two^2}\right|\leq1$, then $\R^{-1}(P_c)$ is a set of points that satisfies
\begin{equation}\label{curve_bounds}
\left|\frac{\one}{\two}\right|\frac{|\one+2\two|}{|\two|}\ \leq\ 1.
\end{equation}
Since the curve $P_c$ is tangent to $\LLone$ and $\R(D\cup\LLone)=\LLone$, $\R^{-1}(P_c)$ must have a branch tangent to $\LLone$ and another branch tangent to $D$.  Moreover, by (\ref{curve_bounds}), these preimage curves must be contained in $\K^v$ and $\K^h$ respectively.  Thus, there is a preimage in $\K^h$ and another in $\K^v$.
\end{proof}


\begin{remark}
With a small amount of additional work, one can show that any point $x\in\bidisk_{\epsilon}$ with $\epsilon$ sufficiently small has a preimage
under the second iterate of $\R$ contained in $\K^h\cap\bidisk_{\epsilon}$.
\end{remark}

\begin{lemma}\label{nested_bullets} For any sufficiently small $\epsilon>0$ and any $k\in\mathbb Z_{+}$, there exist $\sigma>0$ and $\gamma\in\mathbb Z_{+}$ such that if $x\in\bidisk_{\sigma}\setminus B_{\gamma}$, then $x$ has a preorbit $\{\vseq i\}_{i=1}^k$ of length at least $k$ contained in $\K^v\cap\bidisk_{\epsilon}$.
\end{lemma}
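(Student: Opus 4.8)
The plan is to construct the preorbit one backward step at a time, exploiting that $\bidisk_\sigma\setminus B_\gamma$ is a thin cusp-shaped neighborhood of $\LLone\setminus\{\fp\}$ on which, by the normal form (\ref{r2}), $\R$ behaves to leading order like $\two\mapsto\two^2$ along $\LLone$ together with a strong contraction $\one\mapsto\one^2$ in the normal direction. The relevant structural fact is that the pullback $\R^\ast(\LLone)$ meets a neighborhood of $\fp$ only in $\LLone$ and in the line $D=\{\one+2\two=0\}\subset\K^h$, so the preimages of a point very tangent to $\LLone$ split into a group lying very near $\LLone$ --- which are then automatically in $\K^v$ --- and a group lying near $D\subset\K^h$. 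Note also that for every integer $j\geq1$ one has $\bidisk_\epsilon\setminus B_j=\bidisk_\epsilon\cap\{|\one|<|\two|^j\}\subset\K^v$, since there $|\one|<|\two|^j\leq|\two|$. Hence it is enough to produce a preorbit of length $k$ whose $i$-th point lies in $\bidisk_\epsilon\setminus B_{\gamma_i}$ for suitable integers $\gamma_i\geq1$.

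The heart of the argument is a quantitative one-step sublemma: there is a constant $c_0\geq1$, depending only on $\epsilon$, so that for every integer $\ell\geq c_0+2$ and every $x\in\bidisk_\epsilon\setminus B_\ell$ with $|\two(x)|$ small enough, $x$ has a preimage $x^-\in\R^{-1}\{x\}\cap\bidisk_\epsilon$ with $x^-\in\bidisk_\epsilon\setminus B_{\ell-c_0}$ and $|\two(x^-)|\asymp|\two(x)|^{1/2}$. I would prove this by mimicking the proof of Lemma~\ref{horizontal_history}: write the cusp region as the union over $|c|<1$ of the curves $\{\one=c\two^\ell\}$, each tangent to $\LLone$ to order $\ell$; by (\ref{r2}) the equation defining $\R^{-1}(\{\one=c\two^\ell\})$ factors into a branch lying close to $\LLone$ and a branch lying close to $D$, and the leading-order computation --- with the distortion bounds of Proposition~\ref{DISTORTION} controlling the error terms, which is legitimate so long as the orbit avoids $B_\gamma$ --- identifies the branch near $\LLone$ as $\{\one=\pm\frac12\sqrt{c}\,\two^{\ell-1}+\cdots\}$, hence contained in $\bidisk_\epsilon\setminus B_{\ell-c_0}$, with $|\two|\mapsto|\two|^{1/2}(1+O(\epsilon^2))$. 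Alternatively one may invoke Lemma~\ref{horizontal_history} itself for the existence of a vertical preimage and use this computation only to certify how the order of tangency to $\LLone$ degrades under the step.

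Granting the sublemma, the proof is a finite induction. Given $k$, put $\gamma:=kc_0+2$; since each backward step roughly square-roots $|\two|$, one may choose $\sigma>0$ small (depending on $k$ and $\epsilon$, and beating $\epsilon^{2^k}$ up to the accumulated $(1+O(\epsilon^2))$-factors) so that $|\two|<\sigma$ forces each of the $k$ points produced below to satisfy the smallness hypothesis of the sublemma, in particular $|\two|<\epsilon$. Starting from $x_0:=x\in\bidisk_\sigma\setminus B_\gamma\subset\bidisk_\epsilon\setminus B_\gamma$ with $|\two(x_0)|<\sigma$, apply the sublemma $k$ times to obtain $x_{-1},\dots,x_{-k}$ with $\R(x_{-i-1})=x_{-i}$ and $x_{-i}\in\bidisk_\epsilon\setminus B_{\gamma-ic_0}$; the exponent is legitimate at each step because $\gamma-ic_0=(k-i)c_0+2\geq c_0+2$ for $i\leq k-1$. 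Since $\gamma-ic_0\geq2\geq1$ for all $i\leq k$, each $x_{-i}$ lies in $\K^v\cap\bidisk_\epsilon$, so $\{\vseq i\}_{i=1}^k:=\{x_{-i}\}_{i=1}^k$ is the desired preorbit.

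The main obstacle is the sublemma: one must show that the order of tangency to $\LLone$ drops by at most a single fixed amount $c_0$ under one backward step, uniformly in how deep inside the cusp $x$ lies, and one must check that the distortion constants --- which get squared at each backward step, so that only a $2^k$-th root of them survives over $k$ steps --- can be absorbed by the choices of $\gamma$ and $\sigma$ in terms of $k$ and $\epsilon$. This is the only genuinely computational part, and it is routine once organized around the explicit normal form (\ref{r2}) for $\R$ near $\fp$ and the distortion estimates of Proposition~\ref{DISTORTION}.
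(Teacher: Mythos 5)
Your proposal is correct and follows essentially the same route as the paper: the paper's proof is exactly your one-step sublemma in disguise, using Lemma \ref{horizontal_history} to produce a preimage in $\K^v$ and the explicit form (\ref{r2}) to show that for that preimage the order of tangency to $\LLone$ drops by one (up to factors $(1-\delta_i)$ absorbed in the bookkeeping), then iterating and choosing $\gamma\geq k+3$ and $\sigma$ small so the backward orbit stays in $\bidisk_\epsilon$. The only difference is organizational (your explicit constant $c_0$ versus the paper's accumulated error exponents), not mathematical.
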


\begin{proof}
Let $\R(\one,\two)=(\one',\two')\in\bidisk_{\sigma}\setminus B_{\gamma}$, so there is a  $\delta_1>0$ such that
\begin{equation}\label{gammadef} 
1\ \geq\ \frac{|\one'|}{|\two'|^{\gamma}}\ \geq\ \frac{|\one|^2}{|\two|^{2\gamma}}|\one+2\two|^2(1-\delta_1)^{2(\gamma-1)}.
\end{equation}
For large enough $\gamma$ and small enough $\sigma$, 
Lemma \ref{horizontal_history} implies there is some preimage $(\one,\two)\in \K^v$.  Then $|\two|\leq|2\two+\one|$, so
\begin{equation}
1\ \geq\ \frac{|\one|}{|\two|^{\gamma-1}}(1-\delta_1)^{\gamma-1}.
\end{equation}
There are $\delta_i$ for $i=2,\dots,\gamma-2$ so that after repeating this process, we have $\R^{\gamma-2}(\one_0,\two_0)\in\bidisk_{\epsilon}\setminus B_{\gamma}$ with
\begin{equation}\label{delta}
1\ \geq\ \frac{|\one_0|}{|\two_0|^{3}}(1-\delta_1)^{\frac{\gamma-1}{2^{\gamma-4}}}(1-\delta_2)^{\frac{\gamma-2}{2^{\gamma-3}}}\cdots(1-\delta_{\gamma-2})^{\frac{4}{2}}
(1-\delta_{\gamma-3})^{3}.
\end{equation}
Pick $\sigma$ small enough and $\gamma\geq k+3$ so that (\ref{delta}) implies $(\one_0,\two_0)\subset\K^v\cap\bidisk_{\sigma}$ and $\R^{-k}\{x\}\subset\bidisk_{\epsilon}$.
\end{proof}


\subsection{Properties of $\Omega$ and $\varphi$.}


\begin{figure}[ht]
\begin{center}
\scalebox{0.8}{
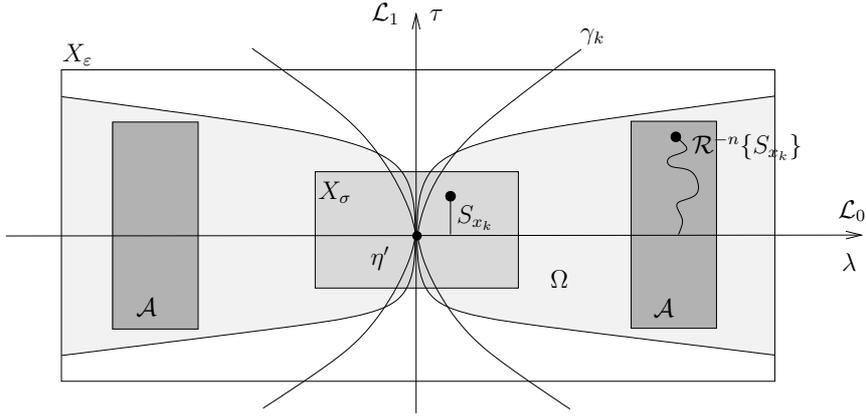
}
\end{center}
\caption{$\bidisk_{\sigma}$ (medium gray), $\mathcal A$ (dark gray), and $\nbhd$ (light gray); proportions have been modified to show detail.}%
\label{paths}
\end{figure}

\begin{lemma}\label{bullets_in_omega} For any $\gamma\in\mathbb Z_{+}$, there exists $\sigma>0$ such that $B_{\gamma}\cap\bidisk_{\sigma}\subset\Omega$.
\end{lemma}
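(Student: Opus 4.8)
The plan is to show that any point in a sufficiently thin bullet-shaped region $B_\gamma\cap\bidisk_\sigma$ eventually enters the original domain $\nbhd_0$ on which $\varphi$ was first constructed (equivalently, enters $\nbhd$ via the iterative extension of Proposition~\ref{omega}). Recall that $\nbhd$, constructed in Proposition~\ref{omega}, contains all of $\LLzero\setminus\{\fp,\fptwo\}$, and in particular contains $\invcirc$ and a full neighborhood of $\invcirc$. The key geometric fact is that under forward iteration a point in $B_\gamma$ near $\fp$ stays in $B_\gamma$ (this is essentially Corollary~\ref{bullets_backward_invariant}, read forwards: $B_\gamma\cap\bidisk_\epsilon$ is forward invariant) and, by the distortion estimate Proposition~\ref{DISTORTION}(i), its first coordinate grows like $|\one_0|^{4^n}$ while staying comparable to it up to fixed multiplicative constants. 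Since $\R|\LLzero$ is $\one\mapsto\one^4$ and $|z|>1$ points on $\LLzero$ escape to $\fp^{-1}$-side while $|z|<1$ points flow toward $\B0$, a point in $B_\gamma$ with $|\one_0|$ not too small will, after finitely many steps, leave the tiny bidisk $\bidisk_\epsilon$ and land near a point of $\LLzero\setminus\{\fp,\fptwo\}$, hence in $\nbhd$.

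First I would fix $\gamma$. Choose $\epsilon>0$ small enough that $\bidisk_\epsilon$ is forward invariant, Corollary~\ref{bullets_backward_invariant} applies, and Proposition~\ref{DISTORTION}(i) holds, so that for any orbit staying in $B_\gamma\cap\bidisk_\epsilon$ we have $|\one_n|\asymp|\one_0|^{4^n}$ with uniform constants. Next, I would choose $\sigma>0$ so small that $B_\gamma\cap\bidisk_\sigma$ is contained in $\bidisk_\epsilon$ and so that every point $x\in B_\gamma\cap\bidisk_\sigma$ has, by the distortion estimate, the property that its forward orbit either leaves $\bidisk_\epsilon$ within finitely many steps through the ``side'' $\{|\one|=\epsilon\}$ — landing in a region of $\LLzero\setminus\{\fp,\fptwo\}$ already known (by Proposition~\ref{omega}) to lie in $\nbhd$ — or it converges to $\fp$ while remaining in $B_\gamma$. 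In the first case, once the orbit is in $\nbhd$, the point $x$ itself is in $\nbhd$ because $\nbhd$ was defined in Proposition~\ref{omega} to be a connected component of $\R^{-1}(\nbhd_\infty)\cap\nbhd_\infty$, and $\nbhd_\infty=\bigcup_n\nbhd_n$ with $\nbhd_n=\{x:\R^{-n}\{x\}\subseteq\Omega_0\}$; the iterative structure guarantees that if a point's orbit enters $\nbhd_0$, that point lies in some $\nbhd_n\subset\nbhd$.

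The main obstacle is ruling out, or rather handling, the second case: orbits inside $B_\gamma\cap\bidisk_\sigma$ that never leave $\bidisk_\epsilon$ and instead converge to $\fp$. These are exactly the points of $\LLzero\cap\bidisk_\epsilon$ by Corollary~\ref{backwards_collapse} (equivalently, $\bigcap_n\R^{-n}(B_\gamma)\cap\bidisk_\epsilon = \LLzero\cap\bidisk_\epsilon$, so persistent $B_\gamma$-orbits are on $\LLzero$) — but wait, this concerns backward orbits; for forward orbits, a point of $B_\gamma\cap\bidisk_\sigma$ not on $\LLzero$ has, by the distortion estimate $|\one_n|\asymp|\one_0|^{4^n}$, a first coordinate that is eventually driven out of $\{|\one|<\epsilon\}$ unless $|\one_0|$ is comparable to a modulus-one value, which cannot happen inside $\bidisk_\sigma$ with $\sigma<1$... so in fact no nonzero point in $B_\gamma\cap\bidisk_\sigma$ has a forward orbit trapped in $\bidisk_\epsilon$; the only trapped points are those of $\LLzero$, which already lie in $\nbhd$ by Proposition~\ref{omega}(iii). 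Thus I would complete the argument by splitting into $x\in\LLzero$ (handled directly) and $x\notin\LLzero$ (handled by the escape mechanism above), shrinking $\sigma$ once more if needed to make the number of steps to escape uniform over $B_\gamma\cap\bidisk_\sigma$, and concluding $B_\gamma\cap\bidisk_\sigma\subset\nbhd=\Omega$.
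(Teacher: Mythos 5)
Your argument runs the dynamics in the wrong direction, and this breaks it at two separate points. First, the escape mechanism: inside $\bidisk_{\epsilon}$ the point $\fp$ is superattracting, the bidisk is forward invariant, and $\R$ strictly decreases both coordinates in modulus there; Proposition \ref{DISTORTION}(i) gives $|\one_n|\asymp|\one_0|^{4^n}$, which tends to $0$ because $|\one_0|<\sigma<1$. So a forward orbit of a point of $B_{\gamma}\cap\bidisk_{\sigma}$ is driven \emph{toward} $\fp$, never out through the side $\{|\one|=\epsilon\}$; your claim that the first coordinate ``grows'' and that the orbit exits near $\LLzero\setminus\{\fp,\fptwo\}$ is the opposite of what happens. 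Relatedly, Corollary \ref{bullets_backward_invariant} asserts backward invariance of $B_{\gamma}$ (indeed, by Proposition \ref{c} forward orbits off $\LLzero$ eventually \emph{leave} $B_{\gamma}$ and become tangent to $\LLone$), so it cannot be ``read forwards'' as you do. Second, even if a forward orbit of $x$ did reach $\nbhd_0$, that would not put $x$ in $\nbhd$: by construction $\nbhd_{n}=\{x\colon\R^{-n}\{x\}\subseteq\Omega_{0}\}$, i.e.\ membership in $\nbhd_\infty$ is a condition on the \emph{entire backward fiber} of $x$, because $\varphi$ is extended as a push-forward by averaging over all preimages. Propagating $\varphi$ backward along a forward orbit, as you propose, would amount to choosing $4^n$-th roots of $\varphi(\R^n x)$, which is exactly the monodromy problem the averaging construction was designed to avoid; nothing in Proposition \ref{omega} licenses the inference ``forward orbit enters $\nbhd_0$ $\Rightarrow$ $x\in\nbhd_n$.''

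The paper's proof goes the other way: for $x=(\one_0,\two_0)\in B_{\gamma}\cap\bidisk_{\sigma}$ it takes the straight segment $S_x$ joining $x$ to $(\one_0,0)\in\LLzero$, and uses Corollary \ref{backwards_collapse} together with the horizontal distortion estimates to show that, for suitable $n$, \emph{all} $n$-th and $(n-1)$-st preimages of $S_x$ land in an annular region $\mathcal A=\{a\epsilon^{4^{j+2}}<|\one|<2A\epsilon^{4^j},\,|\two|<\delta\}$ chosen inside $\nbhd$ and away from $\fp$ (backward iteration pushes the $|\one|$-coordinate up toward this annulus at rate governed by $4^n$-th roots). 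This gives $S_x\subset\nbhd_{\infty}\cap\R^{-1}(\nbhd_{\infty})$, and since $S_x$ is connected and meets $\LLzero\setminus\{\fp,\fptwo\}\subset\nbhd$, the component definition of $\nbhd$ yields $x\in\nbhd$. To repair your write-up you would need to replace the forward-orbit escape argument by this kind of control of full backward fibers.
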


\begin{proof}
By Proposition \ref{omega}, $\nbhd$ contains some neighborhood of
$\LLzero\setminus\{\fp,\fptwo\}$.  By Proposition \ref{DISTORTION}, there exists
$\epsilon>0$ sufficiently small so that for any $\gamma\in\mathbb Z_{+}$, the
horizontal distortion estimates can be applied in
$B_{\gamma}\cap\bidisk_{\epsilon}$.  Let $$\mathcal
A:=\{a\epsilon^{4^{j+2}}<|\one|<2A\epsilon^{4^j},|\two|<\delta\},$$ where $a$
and $A$ are the constants from the distortion estimate, $j\in\mathbb Z_{+}$ is
chosen so that $\mathcal A\subset\bidisk_{\epsilon}$, and $\delta<0$ is chosen
small enough so that $\mathcal A\subset\nbhd$.  See Figure \ref{paths}.

Let $x=(\one_0,\two_0)\in B_{\gamma}\cap\bidisk_{\sigma}$ and $S_x$ be the real straight line path connecting $x$ to $(\one_0,0)\in\LLzero$.  If $\sigma<\epsilon$ is sufficiently small, then by Corollary \ref{backwards_collapse} and the horizontal distortion estimates, there is an integer $n$ such that both $\R^{-n}\{S_x\},\R^{-n+1}\{S_x\}\subset\mathcal A$.  Then $S_x\subset\nbhd_{\infty}$ and $S_x\subset\R^{-1}(\nbhd_{\infty})$, and since $S_x$ is connected and intersects $(\LLzero\setminus\{\fp,\fptwo\})\subset\nbhd$, we have that $x\in S_x\subset\nbhd$.
\end{proof}


\begin{proposition}\label{phi_to_zero} For any sequence $\{x_m\}\subset\Omega$, if $x_m\rightarrow\fp$, then $\varphi(x_m)\rightarrow0$.
\end{proposition}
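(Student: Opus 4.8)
The strategy is to realize every point $x\in\nbhd$ near $\fp$ as a high iterate $\R^{n}(y)$ of a point $y$ lying in a fixed region on which $|\varphi|$ is bounded away from $1$, and then to read off the value of $\varphi(x)$ from the functional equation $\varphi\circ\R=\varphi^{4}$ of Proposition \ref{omega}.i.

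Let $\mathcal A$ be the region appearing in the proof of Lemma \ref{bullets_in_omega}, namely a fixed set of the form $\{a\epsilon^{4^{j+2}}<|\one|<2A\epsilon^{4^{j}},\ |\two|<\delta\}$ with $\overline{\mathcal A}\subset\nbhd\cap\bidisk_{\epsilon}$ (shrink $\delta$ slightly from its value in that proof so that the closure is still inside $\nbhd$). The set $\mathcal A$ is connected, it meets $\LLzero$ (where $\varphi(\one,0)=\one$, so $|\varphi|=|\one|<\epsilon<1$ there), and for $\epsilon$ small it is disjoint from $\stable=\{|\varphi|=1\}$, which lies near $\invcirc=\{|\one|=1\}$; hence $\mathcal A$ is contained in the open set $\{|\varphi|<1\}$ and $\mu:=\max_{\overline{\mathcal A}}|\varphi|<1$. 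Note also that $\mathcal A$ is bounded away from $\fp$, since $|\one|\ge a\epsilon^{4^{j+2}}>0$ on $\mathcal A$.

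The heart of the matter is the following claim: every $x\in\nbhd$ sufficiently close to $\fp$ can be written $x=\R^{n}(y)$ with $y\in\mathcal A$ and $\R^{i}(y)\in\nbhd$ for $0\le i\le n$, where $n=n(x)\to\infty$ as $x\to\fp$. For $x\in\LLzero$ this is immediate, since $\R|\LLzero\colon\one\mapsto\one^{4}$ and $\LLzero\setminus\{\fp\}\subset\nbhd$. For a general $x$ this is exactly the pull-back carried out in the proof of Lemma \ref{bullets_in_omega}: when $x\in B_{\gamma}\cap\bidisk_{\sigma}$, one pulls the segment $S_{x}$ joining $x$ to $(\one(x),0)$ backward into $\mathcal A$, the orbit staying inside the bullet $B_{\gamma}$ by Corollaries \ref{bullets_backward_invariant} and \ref{backwards_collapse} and hence inside $\nbhd$ by Lemma \ref{bullets_in_omega}; when instead $x$ lies so close to $\LLone$ that its bullet index is large, one first uses Lemmas \ref{horizontal_history}, \ref{nested_bullets}, and \ref{nested_bullets_upper} to pull $x$ backward into $\K^{h}=B_{1}$ inside $\nbhd$, reducing to the previous case. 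That $n(x)\to\infty$ as $x\to\fp$ holds because $\R$ strictly decreases the modulus of each coordinate on $\bidisk_{\epsilon}$, so $\R^{i}(\mathcal A)\subset\bidisk_{\rho_{i}}$ with $\rho_{i}\to 0$; thus $x=\R^{n(x)}(y)\in\R^{n(x)}(\mathcal A)$ cannot lie arbitrarily close to $\fp$ unless $n(x)$ is large.

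Granting the claim, the proposition is immediate. If $x_{m}\to\fp$ with $x_{m}\in\nbhd$, write $x_{m}=\R^{n_{m}}(y_{m})$ with $y_{m}\in\mathcal A$ and $\R^{i}(y_{m})\in\nbhd$ for $0\le i\le n_{m}$. Applying Proposition \ref{omega}.i successively along the orbit $y_{m},\R(y_{m}),\dots,\R^{n_{m}}(y_{m})=x_{m}$ gives $\varphi(x_{m})=\varphi(y_{m})^{4^{n_{m}}}$, whence $|\varphi(x_{m})|\le\mu^{4^{n_{m}}}\to 0$, since $\mu<1$ and $n_{m}\to\infty$.

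The crux of the argument is the claim, and in particular arranging that the chosen backward orbit of $x$ stays inside $\nbhd$ and not merely inside $\bidisk_{\epsilon}$; this is delicate precisely when $x$ lies very close to $\LLone$, where $\nbhd$ is thin, and is where the cone lemmas (\ref{horizontal_history}, \ref{nested_bullets}, \ref{nested_bullets_upper}) and the inclusion of the bullets $B_{\gamma}\cap\bidisk_{\sigma}$ in $\nbhd$ (Lemma \ref{bullets_in_omega}) are essential. Verifying that $n(x)\to\infty$, by contrast, is routine given the contraction of $\R$ near $\fp$.
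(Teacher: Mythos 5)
Your proposal is correct and takes essentially the same route as the paper: pull each point backward (one step into $\K^h$ via Lemma \ref{horizontal_history} if necessary, then along a backward orbit kept in a backward-invariant bullet inside $\nbhd$ by Corollary \ref{bullets_backward_invariant} and Lemma \ref{bullets_in_omega}) until it lands in a fixed region $\mathcal A$ near $\LLzero$ where $|\varphi|$ is bounded away from $1$, then apply the functional equation $\varphi\circ\R=\varphi^4$ with the number of steps tending to infinity. One minor remark: your stated justification that $n(x)\to\infty$ (forward images of $\mathcal A$ shrinking into small bidisks) argues the converse direction; what is actually needed is that for each fixed $n_0$ the compact set $\R^{n_0}(\overline{\mathcal A})$ stays away from $\fp$ (because $\fp$ has no preimage in $\overline{\bidisk}_{\epsilon}$ other than itself), a one-line compactness fix comparable in brevity to the paper's own treatment of this point.
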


\begin{proof}
By Lemma \ref{bullets_in_omega}, there exists $\sigma>0$ such that
$B_{3}\cap\bidisk_{\sigma}\subset\nbhd$.  By the uniformity of $\varphi$ on
compact sets and the fact that $\varphi|\LLzero=id$, if $\delta>0$ small
enough, then $\mathcal A:=\{\sigma^{4^2}<|\one|<\sigma,|\two|<\delta\}\subset
B_3$, and $|\varphi(x)|<2\sigma$ for $x\in\mathcal A$.  By Lemma~\ref{horizontal_history}, there is a point in the preimage of each
$x_m\in\bidisk_{\sigma}\setminus B_{3}$ contained in $B_3$, and Corollary
\ref{bullets_backward_invariant}, $B_3$ is backward invariant.  Thus, there is
a backward orbit of each $x_m$ that remains in $B_3\subset\nbhd$.  Let
$\{x_{m,n}\}$ be this preorbit.  If $x_m$ sufficiently close to $\fp$, then by
Corollary \ref{backwards_collapse} there is an $N(m)$ such that
$x_{m,N(m)}\in\mathcal A$.  Using the invariance
$\varphi(\R^{n}(x))=\varphi(x)^{4^n}$, we have
\begin{equation}
|\varphi(x_m)|\ =\ |\varphi(x_{m,N})^{4^N}|\ <\ (2\sigma)^{4^N}.
\end{equation}
As $m$ goes to infinity, we need $N$ to go to infinity as well in order for $x_{m,N}$ to remain in $\mathcal A$.  This implies that the $\lim_{m\rightarrow\infty}|\varphi(x_m)|=0$.
\end{proof}


\subsection{Proof of Proposition \ref{u}.}

\begin{proposition}\label{d} For any $\epsilon>0$ sufficiently small, there is a sequence $\{\seq\}$ converging to $\fp$ such that for each $k$, $\seq$ has a preorbit of length $k$ contained in $\K^v\cap\bidisk_{\epsilon}$ and a preorbit of length $k$ contained in $\K^h\cap\bidisk_{\epsilon}$.  Moreover, any preimage of $\seq$ that is in $\bidisk_{\epsilon}$ is in $\nbhd$.  
\end{proposition}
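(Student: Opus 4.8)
The plan is to build $\seq$ from Lemma~\ref{nested_bullets}, which hands over the $\K^v$--preorbit directly, while the $\K^h$--preorbit is produced from Lemma~\ref{horizontal_history} together with the backward invariance of the bullets; the trick for the final assertion is to place $\seq$ deep inside a bullet so that it becomes a consequence of Lemma~\ref{bullets_in_omega}. First fix $\epsilon>0$ small enough that $\bidisk_\epsilon$ is forward invariant, that Lemma~\ref{horizontal_history} and the distortion estimates of Proposition~\ref{DISTORTION} are valid in $\bidisk_\epsilon$, and that $\fptwo\notin\overline{\bidisk}_\epsilon$. For each $k$ apply Lemma~\ref{nested_bullets} to obtain $\sigma_k>0$ and $\gamma_k\in\mathbb Z_+$; enlarging $\gamma_k$ (with $\sigma_k$ unchanged) we may assume $\gamma_k\geq3$, so $B_3\subseteq B_{\gamma_k}$, and the conclusion of Lemma~\ref{nested_bullets} still holds on the smaller set $\bidisk_{\sigma_k}\setminus B_{\gamma_k}$. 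Apply Lemma~\ref{bullets_in_omega} to $\gamma_k+1$ to get $\rho_k>0$ with $B_{\gamma_k+1}\cap\bidisk_{\rho_k}\subseteq\nbhd$. Finally choose $\seq$ in the shell $B_{\gamma_k+1}\setminus B_{\gamma_k}$, i.e.\ with $|\two(\seq)|^{\gamma_k+1}\leq|\one(\seq)|<|\two(\seq)|^{\gamma_k}$, taking $|\two(\seq)|$ so small that $\seq\in\bidisk_{\min(\sigma_k,\rho_k)}$, that $|\two(\seq)|\to0$ as $k\to\infty$ (so $\seq\to\fp$), and that the finitely many points built in the next step stay in $\bidisk_\epsilon$.

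Since $\seq\in\bidisk_{\sigma_k}\setminus B_{\gamma_k}$, Lemma~\ref{nested_bullets} gives the desired preorbit $\{\vseq i\}_{i=1}^{k}\subset\K^v\cap\bidisk_\epsilon$. For the horizontal preorbit, $\seq\notin B_{\gamma_k}\supseteq B_3$, so Lemma~\ref{horizontal_history} yields a preimage $\hseq1\in\K^h$ (on the branch tangent to $D$); since $|\two(\seq)|$ is tiny this preimage has small modulus, hence lies in $\bidisk_\epsilon$. To continue inductively, use that $\K^h=B_1$ is backward invariant in $\bidisk_\epsilon$ (Corollary~\ref{bullets_backward_invariant}) together with the fact that $\R^{-1}(\LLzero)\cap\bidisk_\epsilon=\LLzero\cap\bidisk_\epsilon$: a point of $\K^h$ whose $\one$--coordinate is small lies close to $\LLzero$, so it has a preimage in $\bidisk_\epsilon$ close to $\LLzero$, and this preimage lies in $\K^h$ by backward invariance. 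Iterating produces $\hseq1,\dots,\hseq k$ with $\R(\hseq{i+1})=\hseq i$; by Proposition~\ref{DISTORTION}(i) applied to the orbit $\hseq k,\dots,\hseq1$ in $B_1\cap\bidisk_\epsilon$ we have $|\one(\hseq i)|\asymp|\one(\hseq1)|^{1/4^{\,i-1}}$, so choosing $|\two(\seq)|$---equivalently $|\one(\hseq1)|$---small enough in terms of $\epsilon$ and $k$ keeps $\hseq1,\dots,\hseq k$ inside $\bidisk_\epsilon$, as required.

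For the last assertion, $\seq\in B_{\gamma_k+1}$, so by Corollary~\ref{bullets_backward_invariant} and induction every iterated preimage $y$ of $\seq$ lying in $\bidisk_\epsilon$ satisfies $y\in B_{\gamma_k+1}$, hence $|\one(y)|\geq|\two(y)|^{\gamma_k+1}$ while $|\one(y)|,|\two(y)|<\epsilon$. If $y\in\bidisk_{\rho_k}$ then $y\in B_{\gamma_k+1}\cap\bidisk_{\rho_k}\subseteq\nbhd$. Otherwise $|\one(y)|\geq\rho_k$, so $y$ has $\one$--coordinate bounded away from $0$, $\two$--coordinate smaller than $\epsilon$, and $\fptwo\notin\overline{\bidisk}_\epsilon$; thus, $\epsilon$ being small, $y$ lies near the piece $\{(\one,0):\rho_k\leq|\one|\leq\epsilon\}$ of $\LLzero\setminus\{\fp,\fptwo\}$ and so in the neighborhood $\nbhd$ of that set. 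In either case $y\in\nbhd$.

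The step I expect to require the most care is the interplay of the last two paragraphs: one must quantify---using Proposition~\ref{DISTORTION}---how rapidly $\seq$ has to approach $\fp$ so that the backward iterates used never leave $\bidisk_\epsilon$, and one must check that $\nbhd$, being a genuine neighborhood of $\LLzero\setminus\{\fp,\fptwo\}$ that by Lemma~\ref{bullets_in_omega} contains arbitrarily deep bullets at $\fp$, really does absorb every preimage of $\seq$ that survives inside $\bidisk_\epsilon$.
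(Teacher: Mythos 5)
Your first two paragraphs follow essentially the paper's route: Lemma \ref{nested_bullets} for the vertical preorbit, Lemma \ref{horizontal_history} plus backward invariance of $\K^h=B_1$ (Corollary \ref{bullets_backward_invariant}) for the horizontal one, and the choice of $\seq$ in the shell $(B_{\gamma_k+1}\setminus B_{\gamma_k})$ close to $\fp$ so that backward invariance of $B_{\gamma_k+1}$ controls all preimages. The problem is the third paragraph, i.e.\ exactly the step you flagged. First, a small slip: $y\notin\bidisk_{\rho_k}$ only gives $\max(|\one(y)|,|\two(y)|)\geq\rho_k$, not $|\one(y)|\geq\rho_k$. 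More seriously, even granting a lower bound on $|\one(y)|$, the conclusion that ``$y$ lies near $\{(\one,0)\colon\rho_k\leq|\one|\leq\epsilon\}$ and so in $\nbhd$'' does not follow. All you know about such a $y$ is that $y\in B_{\gamma_k+1}\cap\bidisk_\epsilon$, so its $\two$-coordinate can be as large as $\epsilon$ (for instance $|\two(y)|=\epsilon/2$, $|\one(y)|=(\epsilon/2)^{\gamma_k+1}$ is consistent with everything you have established). But $\nbhd$ is merely \emph{some} neighborhood of $\LLzero\setminus\{\fp,\fptwo\}$, pinching at $\fp$; its thickness over the region $\{|\one|\approx\rho_k\}$ is not bounded below by anything comparable to $\epsilon$, and cannot be arranged to be, since $\rho_k$ and $\gamma_k$ are chosen \emph{after} $\epsilon$ and the relevant portion of $\LLzero$ approaches the pinch as $k\to\infty$. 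So membership in $\nbhd$ of the preimages lying in $\bidisk_\epsilon\setminus\bidisk_{\rho_k}$ requires a dynamical input, not a nearness remark.

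The paper supplies that input with Lemma \ref{nested_bullets_upper}, which your argument never uses. One first fixes $\epsilon$ so that $\K^h\cap\bidisk_\epsilon\subset\nbhd$ (Lemma \ref{bullets_in_omega} with $\gamma=1$); then one takes $m$ with $\R^{-m}(B_{\gamma+1})\cap(\bidisk_\epsilon\setminus\bidisk_\sigma)\subset\K^h$, and $\tilde\sigma$ so small that every preimage of order at most $m$ of a point of $\bidisk_{\tilde\sigma}$ stays in $\bidisk_\sigma$. With $\seq\in(B_{\gamma+1}\setminus B_\gamma)\cap\bidisk_{\tilde\sigma}$, low-order preimages then land in $B_{\gamma+1}\cap\bidisk_\sigma\subset\nbhd$, while any higher-order preimage in the annulus is an $m$-fold preimage of a point of $B_{\gamma+1}\cap\bidisk_\epsilon$ and hence lies in $\K^h\cap\bidisk_\epsilon\subset\nbhd$. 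In other words, the actual preimages in the annulus are shown to lie in the horizontal cone; without this step (or an equivalent substitute) the final assertion of the proposition is not proved by your argument.
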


\begin{proof}
By Lemma \ref{bullets_in_omega}, there exists $\epsilon>0$ sufficiently small so that $\bidisk_{\epsilon}\cap\K^h\subset\nbhd$.  For each $k\in\mathbb Z_{+}$, we do the following.  Using Lemma \ref{nested_bullets}, there exists $\gamma\in\mathbb Z_{+}$ and $\sigma>0$ such that $\seq\in\bidisk_{\sigma}\setminus B_{\gamma}$ has a preorbit $\vseq i\subset\K^v$ of length at least $k$.  Supposing that $\sigma$ is smaller if necessary, we can assure that $\R^{-k}\{\seq\}\subset\bidisk_{\epsilon}$.  Requiring that $\gamma\geq3$, Lemma \ref{horizontal_history} implies that $\seq$ has a first preimage, $\hseq 1$, in $\K^h$.  Since $\K^h$ is backward invariant by Corollary \ref{bullets_backward_invariant}, $\seq$ has a preorbit $\hseq i\subset\K^h$ of length at least~$k$.  

It remains to show that any preimage of $\seq$ that is in $\bidisk_{\epsilon}$ is in $\nbhd$.  First note that by  Lemma \ref{bullets_in_omega}, we can choose $\sigma$ smaller if necessary so that $\left(B_{\gamma+1}\cap\bidisk_{\sigma}\right)\subset\nbhd$.  By Lemma \ref{nested_bullets_upper}, there is an $m\in\mathbb Z_{+}$ such that $\R^{-m}(B_{\gamma+1})\cap(\bidisk_{\epsilon}\setminus\bidisk_{\sigma})\subset\K^h$.  Let $0<\tilde{\sigma}<\sigma$ be sufficiently small that 
if $x\in\bidisk_{\tilde{\sigma}}$, then $\R^{-m}\{x\}\subset\bidisk_{\sigma}$.  Let $\seq\in (B_{\gamma+1}\setminus B_{\gamma})\cap\bidisk_{\tilde{\sigma}}$.  Using that $B_{\gamma+1}$ is backward invariant, any preimage of $\seq$ that is in $\bidisk_{\sigma}$ will be in  $\left(B_{\gamma+1}\cap\bidisk_{\sigma}\right)\subset\nbhd$.  Meanwhile, by the choice of $\tilde{\sigma}$, any preimage that is in $\bidisk_{\epsilon}\setminus\bidisk_{\sigma}$ will be in $\bidisk_{\epsilon}\cap\K^h\subset\nbhd$.
\end{proof}


\begin{figure}[ht]
\begin{center}
\scalebox{0.85}{
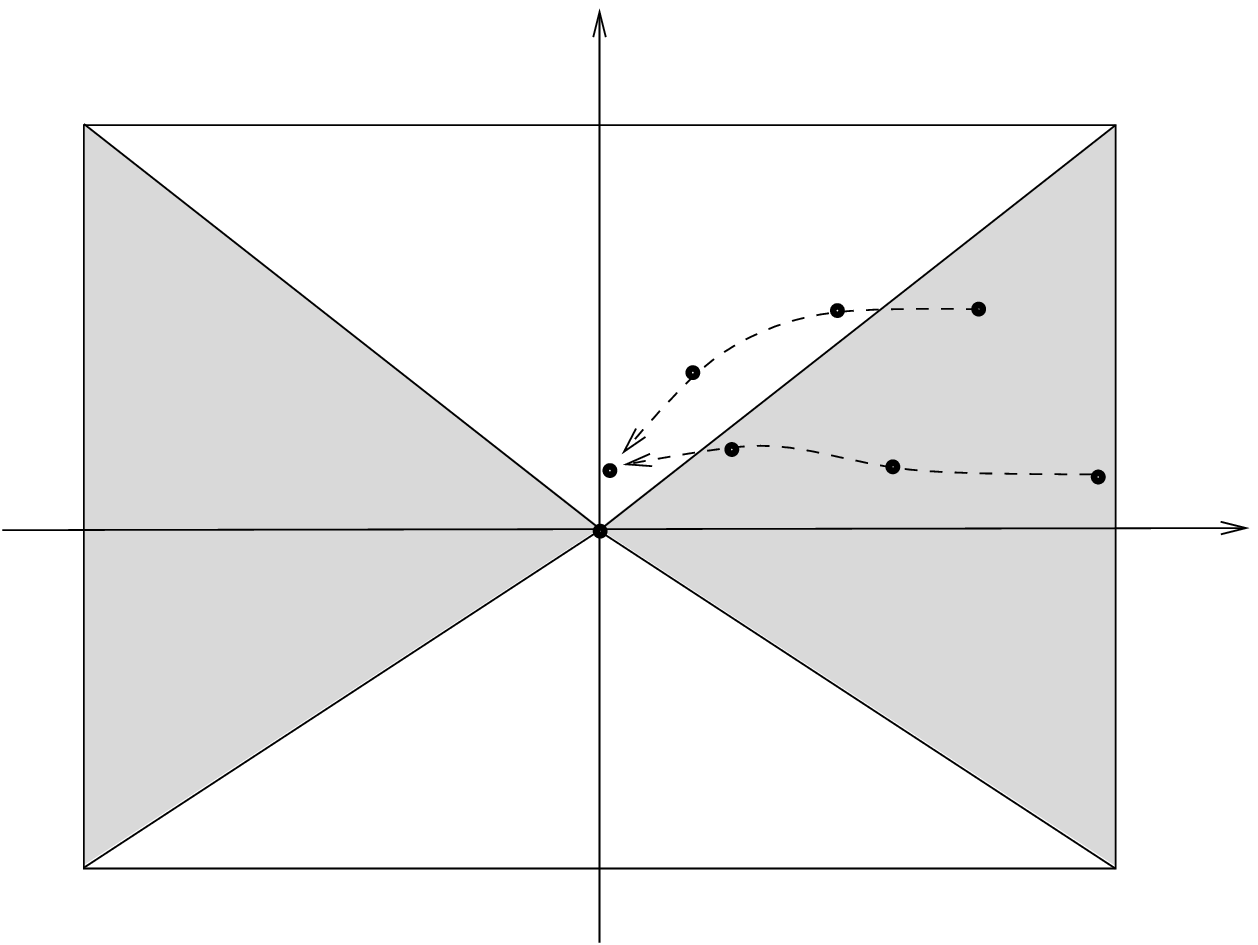
}
\end{center}
\caption{The preorbits $\{\vseq i\}$ and $\{\hseq i\}$}
\label{orbits}
\end{figure}

\begin{proof}[Proof of Proposition \ref{u}.]
Let $\{\seq\}\subset\Omega$ be a sequence as described in Proposition \ref{d}, and for each $k$, let $\{\vseq i\}_{i=1}^k\subset \K^v$ and $\{\hseq i\}_{i=1}^k\subset \K^h$ be preorbits of length $k$ such that $\vseq 0=\hseq 0=\seq$.  Each preorbit $\{\hseq i\}_{i=1}^k$ can be extended to a preorbit $\{\hseq{i}\}_{i=1}^{n(k)}$ with the element $\hseq{n(k)}$ being the  last preimage remaining in $\bidisk_{\epsilon}$.  See Figure \ref{orbits}.  Note that by Proposition \ref{d} for any $0\leq i\leq n(k)$, we have both $\vseq i,\hseq i\in\nbhd$.

We first show there is a subsequence of $\{\hseq{n(k)}\}$, that converges to a point in $\LLzero\setminus\{\fp,\fptwo\}$.  By construction, $\hseq{n(k)}$ is a preimage of $\hseq 1\in\K^h$, so

\begin{eqnarray}
\hseq{n(k)}\in\bigcap_{i=0}^{n(k)-1}\R^{-i}(\K^h)\cap\bidisk_{\epsilon}.
\end{eqnarray}
Also by construction, $\hseq{n(k)}\in\bidisk_{\epsilon}\setminus\R(\bidisk_{\epsilon})$, which has compact closure.  Thus, there is some subsequence such that $\hsubseq{n(k_j)}\rightarrow x_{\ast}$ with
\begin{eqnarray}
x_{\ast}\ \in\ \bigcap_{i=0}^{\infty}\R^{-i}(B_{\gamma_k})\cap\bidisk_{\epsilon}\ =\ \LLzero\cap\bidisk_{\epsilon}.
\end{eqnarray}
However, since each $\hseq{n(k)}\in\bidisk_{\epsilon}\setminus\R(\bidisk_{\epsilon})$, we must have $|x_{\ast}|\geq\epsilon^4$.

By the vertical and horizontal distortion distortion estimates in Proposition \ref{DISTORTION}, preimages of $\seq$ are escaping $\bidisk_{\epsilon}$ faster along $\hseq i$ than $\vseq i$, so we also have $\vseq{n(k)}\subset\bidisk_{\epsilon}$.  Note that $\vseq i$ may be in $\K^h$ for $k\leq i\leq n(k)$.  Then using both vertical and horizontal distortion, there is a constant $A$ so that
\begin{equation}
\dist(\vseq{n(k)},\fp)\ \leq\ A\ \dist(\seq,\fp)^{\frac{1}{2^k4^{n-k}}}\ \asymp\ A\ \dist(\hseq{n(k)},\fp)^{\frac{4^n}{2^k4^{n-k}}}\ \leq\ A\epsilon^{2^k},
\end{equation}
which converges to $0$ as $k\rightarrow\infty$.  Thus, the sequence $\vseq{n(k)}$ converges to $\fp$.

By Proposition \ref{phi_to_zero}, $\varphi(\vseq{n(k)})\rightarrow0$ as $k\rightarrow\infty$.  We also have that  $|\varphi(\hsubseq{n(k_j)})|\rightarrow|\varphi(x_{\ast})|\geq\epsilon^4$ as $k\rightarrow\infty$.  However, $\hsubseq{n(k_j)}$ and $\vsubseq{n(k_j)}$ are both $n$th preimages of $\subseq$, and using the invariance $\varphi(\R^n(x))=\varphi(x)^{4^n}$, this implies $|\varphi(\vsubseq{n(k)})|=|\varphi(\hsubseq{n(k)})|$ for every $n(k)$. Then $0=|\varphi(x_{\ast})|\geq\epsilon^4$, a contradiction.
\end{proof}


\subsection{Proof of Theorem B'}

\begin{lemma}\label{v} If $\stable$ is \ra~at $x \in \invcirc \setminus \{(\pm i,0)\}$, then $\stable$ is \ra~at $\R(x)$.
\end{lemma}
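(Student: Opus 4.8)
I would prove Lemma~\ref{v} by recognising that, near such a circle point $x$, the (degenerate) map $\R$ is a biholomorphism composed with the standard transverse double cover, and then producing a \emph{fold-symmetric} local B\"ottcher function that descends through that double cover. Write $x=(z_{0},0)$ in the coordinates of (\ref{r}), with $|z_{0}|=1$ and $z_{0}\neq\pm i$. In those coordinates $\R$ depends on $t$ only through $t^{2}$: clearing the pole $t^{-2}$ in the second component, $\R(z,t)=G(z,t^{2})$ with
\[
G(z,s)=\left(\frac{z^{2}(z^{2}+s)}{1+z^{2}s},\ \frac{(z^{2}+z^{-2}+2)\,s}{1+(z^{2}+z^{-2})\,s+s^{2}}\right),
\]
which is holomorphic near $(z_{0},0)$. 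Its differential there is upper triangular with diagonal entries $4z_{0}^{3}$ and $(z_{0}+z_{0}^{-1})^{2}$, both nonzero exactly because $z_{0}\neq 0,\pm i$; hence $G$ is a local biholomorphism, and on a small $\iota$-invariant bidisk $U_{x}\ni x$, with $\iota(z,t)=(z,-t)$ and $Q(z,t)=(z,t^{2})$, we have $\R|_{U_{x}}=G\circ Q$, the branched double cover $Q$ (branched over $\LLzero$) followed by a biholomorphism. This is the only place the hypothesis $x\neq(\pm i,0)$ enters.

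Next I would set up the saturation that makes $\R(\stable)$ controllable. Shrinking $U_{x}$, arrange that $\stable$ is a real-analytic hypersurface $V$ in $U_{x}$, that $U_{\R(x)}:=\R(U_{x})$ is a neighborhood of $\R(x)$, and that $\R|_{U_{x}}$ is proper onto $U_{\R(x)}$ (all automatic from $\R=G\circ Q$). Since $Q\circ\iota=Q$ we get $\R\circ\iota=\R$, so, using $\iota$-invariance of $U_{x}$, forward invariance of $\stable$ and the defining property (\ref{s}), one obtains $\iota(V)=V$, $\R(V)=\stable\cap U_{\R(x)}=:W$, and $V=\R^{-1}(W)\cap U_{x}=Q^{-1}\big(Q(V)\big)\cap U_{x}$; in particular $V$ is $Q$-saturated. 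Because $G$ is biholomorphic, $W$ is \ra\ near $\R(x)$ iff $Q(V)$ is \ra\ near $Q(x)$, so the problem reduces to showing that the $Q$-image of the $Q$-saturated real-analytic hypersurface $V$ is real analytic.

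For that I would produce a holomorphic B\"ottcher function $\varphi$ near $x$ with $V=\{|\varphi|=1\}$ and $\varphi\circ\iota=\varphi$. Near $x$, $\stable$ is foliated by the holomorphic local stable manifolds $\gamma_{q}$ of the points $q\in\invcirc$ close to $x$ (\cite[Proposition 9.2]{blr1}), so $V$ is Levi flat; by Rea's Theorem its Levi foliation extends to a holomorphic foliation $\FF$ of a smaller neighborhood of $x$, and after shrinking every leaf of $\FF$ is transverse to $\LLzero$. Since $\R\circ\iota=\R$ and local stable manifolds of points are unique, each $\gamma_{q}$ is $\iota$-invariant, so $\iota_{*}\FF$ is again a holomorphic extension of the Levi foliation of $\iota(V)=V$, whence $\iota_{*}\FF=\FF$ by the uniqueness in Rea's Theorem. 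Now let $\varphi$ send a point to the $z$-coordinate of the intersection of its $\FF$-leaf with $\LLzero$: then $\varphi$ is holomorphic, $\varphi(z,0)=z$, $V=\{|\varphi|=1\}$ (because $\invcirc=\{|z|=1\}\cap\LLzero$ and the leaves through $\invcirc$ sweep out $V$), and $\varphi\circ\iota=\varphi$ (as $\iota$ fixes $\LLzero$ pointwise and permutes the leaves of $\FF$). An $\iota$-invariant holomorphic function is a holomorphic function of $(z,t^{2})$, so $\varphi=\Phi\circ Q$ with $\Phi$ holomorphic near $(z_{0},0)$ and $\Phi(z,0)=z$. Therefore $Q(V)=Q\big(Q^{-1}\{|\Phi|=1\}\big)=\{|\Phi|=1\}$ near $(z_{0},0)$, a real-analytic hypersurface since $d\Phi\neq0$; and then $W=G(\{|\Phi|=1\})$ is \ra\ near $\R(x)$, i.e.\ $\stable$ is \ra\ at $\R(x)$.

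The two substantive points, I expect, are the local normal form of the first step — writing the degenerate $\R$ as (biholomorphism)$\,\circ\,$(transverse fold $Q$) and isolating exactly why $z_{0}\neq\pm i$ is needed — and the observation in the last step that the Levi foliation, equivalently the local B\"ottcher function $\varphi$, inherits the fold symmetry $\iota$. That symmetry is precisely what lets $\varphi$ descend through $Q$, and it is the mechanism by which real analyticity propagates forward under $\R$.
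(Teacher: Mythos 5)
Your proposal is correct in outline, but it takes a genuinely different route from the paper. The paper's proof is essentially a citation: it invokes Baouendi--Rothschild \cite{Baouendi_Rothschild} (Theorem 4: the image of a germ of a \ra\ hypersurface under a \emph{finite} holomorphic germ is \ra\ provided the image is smooth), checks finiteness of $\R$ away from the collapsed vertical lines $z=\pm i$ (this is where the excluded points $(\pm i,0)$ enter), and uses the previously known fact from \cite[Lemma 3.2]{blr2} that $\stable$ is $C^\infty$ to verify the smooth-image hypothesis. You instead exploit a structural feature of $\R$: in the $(z,t)$ coordinates of (\ref{r}) the map depends on $t$ only through $t^2$, so near $x=(z_0,0)$, $z_0\neq\pm i$, it factors as $\R=G\circ Q$ with $Q(z,t)=(z,t^2)$ a transverse fold and $G$ a local biholomorphism (your computation of $DG(z_0,0)$, with diagonal entries $4z_0^3$ and $(z_0+z_0^{-1})^2$, correctly isolates $z_0\neq\pm i$ as the nondegeneracy condition, the same geometric obstruction as in the paper). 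You then rerun, locally at $x$, the paper's own Proposition \ref{b} construction (Levi-flatness from the stable foliation plus Rea's theorem) to get a defining function $\varphi$ with $\stable=\{|\varphi|=1\}$ near $x$, observe via the uniqueness in Rea's theorem (or uniqueness of local stable manifolds) that $\varphi$ inherits the symmetry $\varphi\circ\iota=\varphi$, write $\varphi=\Phi\circ Q$, and push forward through $G$. What each approach buys: yours is self-contained relative to tools already in the paper, and in particular it does not need Baouendi--Rothschild nor the $C^\infty$ smoothness of $\stable$ as an input (your evenness-in-$t$ argument is exactly what replaces the ``smooth image'' hypothesis); on the other hand it leans on the special $t\mapsto -t$ symmetry of this particular map, whereas the paper's argument is shorter and would apply verbatim to any finite holomorphic map with smooth image. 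The only places needing (routine) care in your write-up are the neighborhood bookkeeping in the identities $\R(V)=\stable\cap\R(U_x)$ and $V=Q^{-1}(Q(V))\cap U_x$, which require $U_x\subset\Omega$, an $\iota$-symmetric choice of the implicit neighborhood $\Omega$ in (\ref{s}), and the usual shrinking when applying Rea's theorem; none of these is a genuine gap.
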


\begin{proof}
Images of \ra~hypersurfaces under holomorphic maps were considered by Baouendi
and Rothschild \cite{Baouendi_Rothschild}.  Suppose that $M$ is a germ of a
\ra~hypersurface in $\mathbb{C}^N$ and $H$ is the germ of a holomorphic map
from $\mathbb{C}^N$ to $\mathbb{C}^N$ with $H(0) = 0$.  The germ $H$ is called {\em finite}
if every point in some neighborhood of $0$ has finitely many preimages.  It is shown in \cite[Theorem 4]{Baouendi_Rothschild}
that if $H$ is finite and $M' := H(M)$ is smooth in some neighborhood of $0$, then $M'$ is actually \ra.

We are in the position to apply this result, since $\R$ sends $\stable$ from the neighborhood of any $x \in \invcirc$ 
to $\stable$ within a smaller neighborhood of $\R(x)$.  However, we must avoid the vertical lines $z = \pm i$, which are collapsed by $\R$ to the fixed point $(1,0) \in B$.  Away from these lines, $\R$
is finite.
\end{proof}


\begin{proof}[Proof of Theorem B'.]
By Proposition \ref{u}, there is some point $x \in \invcirc$ at which $\stable$ is not \ra.  We will now use the fact that $\R$ is expanding on
$\invcirc$ to show that $\stable$ is not \ra~in the neighborhood of any point of $\invcirc$.

Since $\R|\invcirc$ is $z \mapsto z^4$, it is
expanding on $\invcirc$, so there is some iterate $n$ such that $\R^n(U \cap
\invcirc) = \invcirc$.  Because we assumed $\stable$ is \ra~at every point of
$U \cap \invcirc$, we can use Lemma \ref{v} iteratively to see that $\stable$
is \ra~at every point of $\invcirc$, except perhaps at the iterated images of
$(\pm i,0)$.  However, these consist of just the fixed point $(1,0)$.  To see
that $\stable$ is \ra~at $(1,0)$ note that $(1,0)$ is also the image of $(-1,0)$
under $\R$, where $\stable$ is \ra.  Thus, $\stable$ must be \ra~at every point
of $\invcirc$, which is impossible by Proposition \ref{u}.

We now know that $\stable$ is not \ra~in the neighborhood of any point of
$\invcirc$.  However, it could still be \ra~in the neighborhood of some other
point.  We now show that this is also impossible.

Each stable manifold $\mathcal{W}^s_\loc(x_0)$ can be expressed as the graph of a convergent power series:
\begin{equation}\label{EQN:PS_FOLIATION}
z=h(t,z_0)=\sum_{j=0}^{\infty}a_j(z_0)t^j \qquad \mbox{where} \qquad x_0 = (z_0,0).
\end{equation}
Since each $\mathcal{W}^s_\loc(x_0)$ depends continuously on $z_0 \in \invcirc$, the coefficients $a_j(z_0)$ are continuous functions of $z_0$.
Therefore, there is a uniform radius of convergence $\delta > 0$.
For the remainder of the proof, we suppose that the neighborhood in which $\stable$ is defined is contained in $|t| < \delta/3$.

Suppose $\stable$ is \ra~in a neighborhood of some $x_1$.  
Then one can express leaves of the stable foliation near $x_1$ as graphs of some convergent power series
\begin{equation}\label{EQN:PS2}
z=k(t,z_1)=\sum_{j=0}^{\infty}b_j(z_1)(t-t_1)^j.
\end{equation}
The function $(z_1,t) \mapsto (z,t)$, with $z$ given by (\ref{EQN:PS2}), gives
a parameterization of $\stable$ near $x_1$ with $z_1$ varying over the real
analytic arc $\stable \cap \{t=t_1\}$ and $t$ varying over some complex disc
centered at $t_0$.   Since we have assumed $\stable$ is real analytic near
$x_1$, the parameterization is an analytic function.  In particular, 
$\frac{\partial^j}{\partial t^j} z$ is real analytic for each $j \geq 0$.  Restricting to 
$t=t_1$ we see that each of the coefficients $b_j(z_1)$ is a real
analytic function of $z_1$.

We now use this to show that $\stable$ is also \ra~in a neighborhood of the
unique point $x_0$ for which $x_1 \in \mathcal{W}^s_\loc(x_0)$. 
Since $\mathcal{W}^s_\loc(x_0)$ is the graph of a holomorphic function over $|t| < \delta$, $|t_1| < \delta/3$ implies that
(\ref{EQN:PS2}) converges on the disc $|t-t_0| < \delta/2$.  In particular, each of the holomorphic discs defined by (\ref{EQN:PS2}) crosses all the way
through $\invcirc$.  As they depend real analytically on $z_1$, this implies that $\stable$ is \ra~ in a neighborhood of $x_0 \in \invcirc$,
which is not possible.
\end{proof}


\section{Physical Interpretation.}\label{SEC:NON_ANALYTIC_LY}

In this section we will relate Theorems B' to the Ising Model on the
DHL.  We refer the reader to \cite{blr1,blr2} for physical background.
The DHL is a sequence of graphs $\Gamma_n$ obtained in a self-similar way.  Associated to each graph
is a partition function $\Zpart_n(z,t)$ whose zeros 
\begin{eqnarray*}
\mathcal{S}^c_n := \{(z,t) \in \C^2 \, : \, \Zpart_n(z,t)=0\}
\end{eqnarray*}
describe the singularities of the Ising model associated to $\Gamma_n$.  They are called the {\em Lee-Yang-Fisher zeros}.  
The actual physics is described by the limit $n \rightarrow \infty$.  It is proved in \cite{blr2} that the limiting distribution of zeros exists as a closed, positive $(1,1)$-current $\mathcal{S}^c$ on $\mathbb{P}^2$.  In fact, $\mathcal{S}^c = \frac{1}{2}\Psi^* S$, where $S$ is the Green current for $R$.  The support of $\mathcal{S}^c$ describes locus where phase transitions occur in $\C^2$.

It is shown in \cite{blr2} that at low complex temperatures ${\rm supp} \ \mathcal{S}^c$ coincides with $\stable$.  Combining Theorem B'
with the work from \cite{blr2} gives the following:
\begin{corollary}\label{COR:LYF}
At low complex temperatures ($|t|$ small), the locus of phase transitions for the Ising model on the DHL is a $3$ real-dimensional manifold that is $C^\infty$ but not real analytic.
\end{corollary}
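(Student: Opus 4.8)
The plan is to assemble the corollary directly from three ingredients that are already available: the description of the limiting Lee--Yang--Fisher current recalled just above, the $C^\infty$ regularity of $\stable$ from \cite[Lemma 3.2]{blr2}, and Theorem B'. First, by \cite[Lemma 3.2]{blr2}, in some neighborhood of $\invcirc$ the set $\stable$ is a $C^\infty$ submanifold of $\C^2$ of real dimension $3$; it is a real hypersurface, being the union over $x_0\in\invcirc$ of the one-dimensional holomorphic stable curves $\WW^s_\loc(x_0)$, so the dimension count is also the case $n=2$ of Corollary \ref{COR:MAN}. Second, by \cite{blr2}, for $|t|$ sufficiently small the support of the limiting current $\mathcal{S}^c$ coincides with $\stable$, and $\mathrm{supp}\,\mathcal{S}^c$ is by definition the locus of phase transitions in $\C^2$. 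Third, Theorem B' asserts that $\stable$ is not \ra~at any of its points.

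Carrying this out, I would first fix a single neighborhood of $\invcirc$ small enough that all three statements hold on it simultaneously. This is automatic: $\invcirc=\{|z|=1,\ t=0\}$ lies in the region where $|t|$ is small, and each of the three cited facts holds on an arbitrarily small neighborhood of $\invcirc$, so one simply intersects the three neighborhoods. On the resulting neighborhood the locus of phase transitions equals $\mathrm{supp}\,\mathcal{S}^c$, which equals $\stable$, which is a $3$-real-dimensional $C^\infty$ manifold by \cite[Lemma 3.2]{blr2} and fails to be real analytic near any of its points by Theorem B'. Chaining these identifications and properties together yields exactly the statement of the corollary.

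I do not expect a substantial obstacle: all of the real content lives in Section \ref{SEC:NON_ANALYTIC} and in \cite{blr2}, and this corollary is simply the bookkeeping that repackages Theorem B' in physical language. The one point deserving a word of care is the reading of ``not real analytic'' in the statement, which should be understood as ``not real analytic in the neighborhood of any point'' --- this is verbatim the conclusion of Theorem B' --- so that the locus of phase transitions is genuinely only $C^\infty$ and no smoother.
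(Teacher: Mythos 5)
Your proposal is correct and matches the paper's own (implicit) argument: the paper states the corollary as an immediate combination of Theorem B', the $C^\infty$ regularity of $\stable$ from \cite[Lemma 3.2]{blr2}, and the identification ${\rm supp}\,\mathcal{S}^c = \stable$ at low complex temperatures from \cite{blr2}, exactly the three ingredients you chain together. Your bookkeeping about intersecting neighborhoods and the dimension count via Corollary \ref{COR:MAN} with $n=2$ is consistent with what the paper leaves unstated.
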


A preferred subset of the Lee-Yang-Fisher zeros is obtained by requiring that $t
\in [0,1]$, which correspond to ``physical'' temperatures.  The Lee-Yang Circle
Theorem \cite{YL,LY} asserts that for each $n$ and fixed $t_0 \in [0,1]$, zeros
of partition function $\Zpart_n(z,t_0)$ corresponding to $\Gamma_n$ lie on
the unit circle
$\T_{t_0} := \{|z| =1,\, t=t_0\}$.
Let 
\begin{eqnarray*}
\mathcal{C} = \{|z| = 1, t \in [0,1]\}.
\end{eqnarray*}
The {\em Lee-Yang zeros} are defined by
\begin{eqnarray*}
\mathcal{S}_n := \{(z,t) \in \mathcal{C} \, : \, \Zpart_n(z,t)=0\}.
\end{eqnarray*}

Isakov \cite{ISAKOV} proved for any $t_0 >0$ sufficiently small the free energy for the Ising model on the $\mathbb{Z}^d$
lattice with $d > 1$ does not have analytic continuation through any point of the circle
$\mathbb{T}_{t_0}$.  This implies that the limiting distribution of Lee-Yang
zeros for the $\mathbb{Z}^d$ lattice with $d > 1$ does not have real
analytic density in the neighborhood of any point of the circle $t=t_0$.
In the remainder of this section, we discuss how Corollary \ref{COR:LYF} can be related to Isakov's result.

One can check that $\Rphys$ maps the Lee-Yang cylinder
$\mathcal{C}$ 
into itself, with the Lee-Yang zeros corresponding to $\Gamma_{n+1}$ obtained
by pulling back the Lee-Yang zeros corresponding to $\Gamma_n$ under $\R |
\Cphys$.  The map $\R\colon \mathcal{C} \rightarrow \mathcal{C}$ was also studied previously by Bleher and \v{Z}alys
\cite{BLEHER_ZALYS}.

In \cite{blr1}, Bleher, Lyubich, and Roeder describe the limiting distribution of Lee-Yang zeros for the DHL;
let us provide a very brief summary.  Let
$\Cphys_1 := \Cphys \setminus \{t=1\}$.  It was shown that $\mathcal{R} \colon
\Cphys_1 \rightarrow \Cphys_1$ is partially hyperbolic, with a unique central
foliation $\mathcal{F}^c$ which is vertical (with respect to a suitable cone
field) on $\Cphys_1$.  In particular, one can define the $\mathcal{F}^c$
holonomy map $g_{t}: \T_0 \rightarrow \T_{t}.$   The limiting distribution of
Lee-Yang zeros at temperature $t_0 \in [0,1)$ is obtained as the pushforward $\mu_t
= g_{{t_0}*} {\rm Leb}$, where ${\rm Leb}$ is the normalized Lebesgue measure on $\T_0$.

In a neighborhood of $\invcirc$, $\FF^c$ coincides with the stable foliation of
$\invcirc$, which is a union of the real analytic curves $\WW^s_\loc(x) \cap
\mathcal{C}$, taken over $x \in \invcirc$.  It is shown in \cite[Lemma
3.2]{blr2} that the stable foliation of $\invcirc$ within $\mathcal{C}$ has the
same regularity that the stable manifold $\stable$ does as a submanifold of
$\C^2$.  (In fact, $\stable$ was shown to be a $C^\infty$ manifold in \cite{blr2} by first
showing that the stable foliation of $\invcirc$ within $\mathcal{C}$ is $C^\infty$.)

Therefore, Theorem B' implies that the central foliation is not real analytic at low temperatures.  Moreover,
by \cite{blr1}, an open dense set of points from $\mathcal{C}$ have orbits converging to $\invcirc$.  Since $\mathcal{F}^c$ is invariant,
this implies the following:
\begin{theorem}
$\mathcal{F}^c$ is not real analytic in the neighborhood of any point of $\mathcal{C}$.
\end{theorem}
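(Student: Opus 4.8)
The plan is to bootstrap from Theorem~B$'$ using the dynamics of $\R$ on the Lee-Yang cylinder $\mathcal{C}$.

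First I would record what Theorem~B$'$ gives near $\invcirc$. In a neighborhood of $\invcirc$ inside $\mathcal{C}$ the central foliation $\FF^c$ coincides with the stable foliation of $\invcirc$ within $\mathcal{C}$, and by \cite[Lemma~3.2]{blr2} the latter has exactly the same regularity as the submanifold $\stable\subset\C^2$. Since Theorem~B$'$ says that $\stable$ is not \ra{} at any point, there is a neighborhood $\mathcal{U}$ of $\invcirc$ in $\mathcal{C}$ at no point of which $\FF^c$ is \ra.

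Then I would argue by contradiction. Suppose $\FF^c$ is \ra{} in a neighborhood of some point, and let $G\subset\mathcal{C}_1$ be the nonempty open set of points near which $\FF^c$ is \ra. Because $\B0$ does not lie on $\mathcal{C}$, the map $\R$ is holomorphic near $\mathcal{C}$ and sends $\mathcal{C}$ into itself, so $\R|\mathcal{C}$ is a real-analytic self-map of the real surface $\mathcal{C}$ which is a local diffeomorphism away from $Z:=\mathcal{C}\cap(\text{critical locus of }\R)$; here $Z$ is a nowhere-dense real-analytic subset of $\mathcal{C}$ containing the segments $\{z=\pm i\}\cap\mathcal{C}$ that $\R$ collapses to the fixed point $(1,0)$. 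Thus $Z_\infty:=\bigcup_{n\ge 0}(\R|\mathcal{C})^{-n}(Z)$ has measure zero. By \cite{blr1} an open dense subset $O\subset\mathcal{C}$ consists of points whose forward orbit converges to $\invcirc$, so $G\cap O$ is open and nonempty and hence contains a point $p\notin Z_\infty$. Along the orbit of $p$ the map $\R$ is, at every step, a local real-analytic diffeomorphism of $\mathcal{C}$, so the invariance $\R_*\FF^c=\FF^c$ propagates real analyticity forward: $\R^n(p)\in G$ for all $n\ge 0$. But $\R^n(p)\to\invcirc$, so $\R^n(p)\in\mathcal{U}$ for all large $n$, contradicting $\mathcal{U}\cap G=\emptyset$.

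The step I expect to be the main obstacle is this forward propagation of real analyticity: one must be sure the chosen orbit never meets the degeneracy locus $Z$ (in particular never meets $\{z=\pm i\}$), so that every application of $\R$ is an honest local diffeomorphism carrying an \ra{} foliation to an \ra{} foliation. This is precisely why $Z_\infty$ must be negligible and why one needs the converging-orbit set of \cite{blr1} to be open and dense. (Near $\invcirc$ one could instead invoke the Baouendi--Rothschild argument of Lemma~\ref{v}, since $\FF^c$ is smooth there, but the measure-theoretic route is cleaner.) The other ingredients --- the regularity transfer of \cite[Lemma~3.2]{blr2} and the invariance of $\FF^c$ --- are immediate from the cited results.
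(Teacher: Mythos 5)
Your proposal is correct and follows essentially the same route as the paper: transfer non-analyticity from $\stable$ to the stable/central foliation near $\invcirc$ via Theorem B$'$ and \cite[Lemma 3.2]{blr2}, then spread it over the whole cylinder using the open dense set of orbits converging to $\invcirc$ together with invariance of $\FF^c$. The only difference is that you make explicit the step the paper leaves implicit --- choosing the converging orbit off the (measure-zero) set of points that ever hit the degeneracy locus of $\R|\mathcal{C}$, so that real analyticity can be pushed forward by local diffeomorphisms --- which is a reasonable and correct way to fill that gap.
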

\noindent
Using the holonomy description of the limiting distribution of Lee-Yang zeros, we find the following modest analog of Isakov's Theorem for the DHL:

\begin{corollary}\label{COR_DENSE}
For any $z = e^{i \phi} \in \mathcal{B}$, there is a dense set of 
$t_0 \in [0,1]$ so that the limiting distribution of Lee-Yang zeros within $\mathbb{T}_{t_0}$ does not have
real analytic density at $(t_0 ,\phi)$.
\end{corollary}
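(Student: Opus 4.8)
The plan is to translate the assertion into a property of the central holonomy, push it up to the holomorphic leaves $z=h(t,z_0)=\sum_{j\ge 0}a_j(z_0)t^j$ of (\ref{EQN:PS_FOLIATION}), and then contradict Theorem~B'. Fix $z=e^{i\phi}\in\invcirc$. The dictionary is: the central holonomy $g_{t_0}\colon\T_0\to\T_{t_0}$ sends $\phi_0$ to the $\phi$--coordinate of the point where $\WW^s_\loc((e^{i\phi_0},0))\cap\Cphys$ meets $\T_{t_0}$; since that point is $\bigl(h(t_0,e^{i\phi_0}),t_0\bigr)$ with $|h(t_0,e^{i\phi_0})|=1$, we get $g_{t_0}(\phi_0)=\arg h(t_0,e^{i\phi_0})$. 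As $\mu_{t_0}=g_{{t_0}*}\mathrm{Leb}$, its density equals $\tfrac1{2\pi}(g_{t_0}^{-1})'$ up to sign, so the density of $\mu_{t_0}$ is \ra\ at $(t_0,\phi)$ precisely when $g_{t_0}$ is \ra\ near $\phi_0^{*}:=g_{t_0}^{-1}(\phi)$, equivalently when $z_0\mapsto h(t_0,z_0)$ is \ra\ along $\T_0$ near $e^{i\phi_0^{*}}$.

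I would then argue by contradiction. If the conclusion failed for this $\phi$, then the set of $t_0\in[0,1]$ at which the density of $\mu_{t_0}$ is \ra\ at $(t_0,\phi)$ would contain an interval $I$. Shrinking $I$ --- and using that $\stable$ is $C^\infty$, so that $t_0\mapsto g_{t_0}^{-1}(\phi)$ is continuous --- one may assume $I\subset(0,\delta)$, with $\delta$ the uniform radius of convergence in (\ref{EQN:PS_FOLIATION}), and that there is a fixed closed arc $J\ni z_1$ of $\T_0$ with $z_0\mapsto h(t_0,z_0)$ \ra\ on $J$ for every $t_0\in I$. The goal then is to show that $\stable$ is \ra\ near the point $(z_1,0)\in\invcirc$, which contradicts Theorem~B'. (One could instead contradict the theorem that $\FF^c$ is not \ra\ anywhere on $\Cphys$, since $\FF^c$ coincides with the stable foliation near $\invcirc$.)

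For this deduction I would use three facts about $h$ near $z_0=z_1$: it is holomorphic in $t$ on $\mathbb D_\delta$ for each $z_0\in J$; it is bounded, the leaves staying in a fixed neighborhood; and, by the dictionary, it is \ra\ in $z_0$ on $J$ for each $t_0\in I$. The one real difficulty is that the complex neighborhood of $z_1$ to which $h(\cdot,t_0)$ continues holomorphically may shrink as $t_0$ ranges over $I$. I would remove this by a Baire category argument: stratify $I$ according to the radius $r$ and a bound $M$ for the holomorphic extension of $h(\cdot,t_0)$; each stratum is closed --- by Montel together with continuity of $t_0\mapsto h(\cdot,t_0)$ in the sup norm on $J$, which follows from continuity of the $a_j$ and the uniform radius of convergence --- so some stratum contains a subinterval $I'\subset I$, and on $I'$ the map $h(\cdot,t_0)$ extends holomorphically, with a uniform bound, to a fixed disc $B(z_1,r)$ in the complexified circle parameter. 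Then I would invoke the cross theorem for separately holomorphic functions (Siciak, Zahariuta) on the cross $(J'\times\mathbb D_\delta)\cup(B(z_1,r)\times I')$, where $J':=J\cap B(z_1,r)$: there $h$ is separately holomorphic and bounded, and $J'$, $I'$ are non-pluripolar, so $h$ extends holomorphically to a neighborhood of $(z_1,0)$ in $\C^2$. Hence each $a_j(z_0)=\tfrac1{j!}\,\partial_t^{\,j}h(0,z_0)$ is holomorphic in $z_0$ near $z_1$, so the parametrization $(z_0,t)\mapsto(h(t,z_0),t)$ of $\stable$ is \ra\ near $(z_1,0)$ --- the desired contradiction.

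The hard part is exactly this analytic-continuation step: converting a one-parameter family of single-variable \ra\ statements about the holonomy into genuine joint real analyticity of the leaves; the holonomy/density dictionary and the reductions are routine. A possible alternative to the cross theorem is to imitate the dynamical propagation used in the proof of Theorem~B', exploiting that $\R$ is expanding on $\invcirc$ to spread a single failure of analyticity around $\invcirc$, but the route above appears the most self-contained.
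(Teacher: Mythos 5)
Your low-temperature core (leafwise holomorphy in $t$ on a uniform disc, circle-wise real analyticity of the holonomy, Baire/Montel to get uniform bounds, then a Siciak--Zahariuta cross theorem to produce joint analyticity of $\stable$ near a point of $\invcirc$ and contradict Theorem B') is a legitimate way to make the deduction rigorous, and it is considerably more detailed than the paper, which derives the corollary in one stroke from the theorem immediately preceding it -- that $\FF^c$ is not \ra\ in the neighborhood of \emph{any} point of $\Cphys$ -- together with the holonomy description $\mu_{t_0}=g_{t_0*}\mathrm{Leb}$. That theorem is exactly the globalized statement, obtained by transporting the non-analyticity from a neighborhood of $\invcirc$ to the whole cylinder using that an open dense set of orbits in $\Cphys$ converges to $\invcirc$ and that $\FF^c$ is invariant.

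The genuine gap in your argument is the reduction ``one may assume $I\subset(0,\delta)$.'' Negating the corollary only yields an open interval $I\subset[0,1]$ of temperatures at which the density is \ra\ at the angle $\phi$; shrinking $I$ cannot move it into $(0,\delta)$, and if $I$ lies away from the bottom of the cylinder your mechanism has nothing to run on: the uniform parameterization (\ref{EQN:PS_FOLIATION}) of leaves as graphs holomorphic in $t$ on $\mathbb{D}_\delta$ is centered at $t=0$ and does not reach such $t_0$, the transverse regularity of $\FF^c$ there is not known beyond continuity, and so there is no cross $(J'\times\mathbb{D}_\delta)\cup(B(z_1,r)\times I')$ to which the cross theorem applies. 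Handling an interval of ``good'' temperatures anywhere in $[0,1]$ is precisely what the paper's appeal to the globalized nowhere-analyticity of $\FF^c$ is for; you would need either to invoke that theorem (and still supply a separate-to-joint upgrade in that setting) or to transport $I$ dynamically into a neighborhood of $\invcirc$ first -- the propagation you allude to at the end, but up the cylinder rather than around the circle. Two smaller points: the ``fixed closed arc $J$'' on which $z_0\mapsto h(t_0,z_0)$ is \ra\ for all $t_0\in I$ is not obtained merely by shrinking $I$, since the arcs of analyticity about the moving points $g_{t_0}^{-1}(\phi)$ have no a priori uniform length -- that uniformity must itself come out of the Baire stratification, with strata indexed by arc radius and bound about moving centers; and passing from analyticity of the density $(g_{t_0}^{-1})'$ to analyticity of $g_{t_0}$ uses non-vanishing of the holonomy derivative, which should be justified by the $C^\infty$ transversality of the stable foliation near $\invcirc$.
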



\begin{thebibliography}{9}


\bibitem{Baouendi_Rothschild}
M.~S. Baouendi and L.~P. Rothschild.
\newblock Images of real hypersurfaces under holomorphic mappings.
\newblock {\em J. Differential Geom.}, 36(1):75--88, 1992.

\bibitem{BEDFORD_JONSSON}
E. Bedford and M. Jonsson.
\newblock Dynamics of regular polynomial endomorphisms of {${\bf C}^k$}.
\newblock {\em Amer. J. Math.}, 122(1):153--212, 2000.

\bibitem{BEDFORD_JONSSON_POTENTIAL}
E. Bedford and M. Jonsson.
\newblock Potential theory in complex dynamics: regular polynomial mappings of {${\bf C}^k$}.
\newblock {\em Complex analysis and geometry} (Paris, 1997), 203--211. {\em Progr. Math.}, 188, Birkh\"auser, Basel, 2000.

\bibitem{BLEHER_ZALYS}
P.~M. Bleher and E.~{\v{Z}}alys.
\newblock Asymptotics of the susceptibility for the {I}sing model on the
  hierarchical lattices.
\newblock {\em Comm. Math. Phys.}, 120(3):409--436, 1989.

\bibitem{blr2}
P. Bleher, M. Lyubich, and R. Roeder.
\newblock Lee-{Y}ang-{F}isher zeros for {DHL} and 2{D} {R}ational {D}ynamics,
  {II}. {G}lobal {P}luripotential {I}nterpretation.
\newblock Preprint availible at: \url{http://arxiv.org/abs/1107.5764}.

\bibitem{blr1}
P. Bleher, M. Lyubich, and R. Roeder.
\newblock Lee-{Y}ang zeros for {DHL} and 2{D} {R}ational {D}ynamics, {I}.
  {F}oliation of the {P}hysical {C}ylinder.
\newblock Preprint availible at: \url{http://arxiv.org/abs/1009.4691}.

\bibitem{BOTTCHER}
L.E. B\"ottcher.
\newblock The principal laws of convergence of iterates and their application
  to analysis (russian).
\newblock {\em Izv. Kazan. Fiz.-Mat. Obshch.}, 14:155--234, 1904.

\bibitem{BEK}
X. Buff, A. Epstein, and S. Koch.
\newblock B\"ottcher functions.
\newblock Preprint; see \url{http://arxiv.org/abs/1104.2981}.

\bibitem{DeMarco_Hruska}
L. DeMarco and S.~L. Hruska.
\newblock Axiom {A} polynomial skew products of {$\mathbb C^2$} and their
  postcritical sets.
\newblock {\em Ergodic Theory Dynam. Systems}, 28(6):1749--1779, 2008.

\bibitem{STAWISKA}
S. Domoradzki and M. Stawiska.
\newblock Lucjan Emil B\"ottcher and his mathematical legacy.
\newblock Preprint availible at: \url{http://arxiv.org/abs/1207.2747}.

\bibitem{FAVRE_JONSSON}
C. Favre and M. Jonsson.
\newblock Eigenvaluations.
\newblock {\em Ann. Sci. {\'E}cole Norm. Sup.}, 40:309--349, 2007.

\bibitem{FS_hyperbolic}
J.~E. Forn{\ae}ss and N. Sibony.
\newblock Hyperbolic maps on {$\mathbb P^2$}.
\newblock {\em Math. Ann.}, 311(2):305--333, 1998.


\bibitem{GIGNAC}
W. Gignac.
\newblock On the growth of local intersection multiplicities in holomorphic dynamics: a conjecture of Arnold.
\newblock Preprint; see  \url{http://arxiv.org/abs/1212.5272}.

\bibitem{GIGNAC_RUGGIERO}
W. Gignac and M. Ruggiero.
\newblock Growth of attraction rates for iterates of a superattracting germ in dimension two.
\newblock Preprint; see  \url{http://arxiv.org/abs/1209.3450}.

\bibitem{HPS}
M.~W. Hirsch, C.~C. Pugh, and M.~Shub.
\newblock {\em Invariant manifolds}.
\newblock Lecture Notes in Mathematics, Vol. 583. Springer-Verlag, Berlin,
  1977.

\bibitem{hruska_roeder2008}
S.~L. Hruska and R.~K.~W. Roeder.
\newblock Topology of {F}atou components for endomorphisms of {$\mathbb
  C\mathbb P^k$}: linking with the {G}reen's current.
\newblock {\em Fund. Math.}, 210(1):73--98, 2010.

\bibitem{hubbard_papadopol}
J.~H. Hubbard and P. Papadopol.
\newblock Superattracting fixed points in {$\mathbb C^n$}.
\newblock {\em Indiana Univ. Math. J.}, 433:21--365, 1994.

\bibitem{ISAKOV}
S.~N. Isakov.
\newblock Nonanalytic features of the first order phase transition in the
  {I}sing model.
\newblock {\em Comm. Math. Phys.}, 95(4):427--443, 1984.

\bibitem{jonsson}
M.~Jonsson.
\newblock {\em Dynamical studies in several complex variables}.
\newblock PhD thesis, Ph. D. thesis, Royal Institute of Technology, 1997.

\bibitem{krantz}
S.~G. Krantz.
\newblock {\em Function theory of several complex variables}.
\newblock AMS Chelsea Publishing, Providence, RI, 2001.
\newblock Reprint of the 1992 edition.

\bibitem{LY}
T.~D. Lee and C.~N. Yang.
\newblock Statistical theory of equations of state and phase transitions. {II}.
  {L}attice gas and {I}sing model.
\newblock {\em Physical Rev. (2)}, 87:410--419, 1952.

\bibitem{Lyubich}
M.~Y. Lyubich.
\newblock Some typical properties of the dynamics of rational mappings.
\newblock {\em Uspekhi Mat. Nauk}, 38(5(233)):197--198, 1983.

\bibitem{MSS}
R.~Ma{\~n}{\'e}, P.~Sad, and D.~Sullivan.
\newblock On the dynamics of rational maps.
\newblock {\em Ann. Sci. \'Ecole Norm. Sup. (4)}, 16(2):193--217, 1983.

\bibitem{milnor}
J. Milnor.
\newblock {\em Dynamics in one complex variable}, volume 160 of {\em Annals of
  Mathematics Studies}.
\newblock Princeton University Press, Princeton, NJ, third edition, 2006.

\bibitem{nish}
T. Nishino.
\newblock {\em Function theory in several complex variables}, volume 193 of
  {\em Translations of Mathematical Monographs}.
\newblock American Mathematical Society, Providence, RI, 2001.
\newblock Translated from the 1996 Japanese original by Norman Levenberg and
  Hiroshi Yamaguchi.

\bibitem{palis1982}
J.~Palis, W.~De~Melo, and A.K. Manning.
\newblock {\em Geometric theory of dynamical systems}.
\newblock Springer New York etc., 1982.

\bibitem{PENG}
G.~Peng.
\newblock On the dynamics of nondegenerate polynomial endomorphisms of {${\bf C}^2$}.
\newblock {\em Journal of Mathematical Analysis and Applications (2)}, 237:609--621, 1999.

\bibitem{pugh1989ergodic}
C.~Pugh and M.~Shub.
\newblock Ergodic attractors.
\newblock {\em Trans. Amer. Math. Soc}, 312(1):1--54, 1989.

\bibitem{pujals2008two}
E.R. Pujals and R.K.W. Roeder.
\newblock Two-dimensional Blaschke products: Degree growth and ergodic
  consequences.
\newblock {\em Indiana University Mathematics Journal}, 59:301--326, 2010.

\bibitem{REA}
C.~Rea.
\newblock Levi-flat submanifolds and holomorphic extension of foliations.
\newblock {\em Ann. Scuola Norm. Sup. Pisa (3)}, 26:665--681, 1972.

\bibitem{roeder2005}
R.~K.~W. Roeder.
\newblock A degenerate {N}ewton's map in two complex variables: linking
  with currents.
\newblock {\em J. Geom. Anal. (17)}, 1:107--146, 2007.

\bibitem{RUGGIERO}
M.~Ruggiero.
\newblock Rigidification of holomorphic germs with noninvertible differential.
\newblock {\em Michigan Math. J. (1)}, 61:161--185, 2012.


\bibitem{UEDA}
T. Ueda.
\newblock Complex dynamical systems on projective spaces.
\newblock Proceedings of the RIMS conference {\em Chaotic dynamical systems}, 7-10 July 1992, S. Ushiki, Editor.

\bibitem{USHIKI_BOTT}
S. Ushiki.
\newblock B\"ottchers theorem and super-stable manifolds for multidimensional complex dynamical systems.
\newblock Published, available at \url{http://www.math.h.kyoto-u.ac.jp/~ushiki/papers/}.

\bibitem{USHIKI_SADDLE}
S. Ushiki.
\newblock Super-stable manifolds of super-saddle-type Julia sets in {${\bm C}^2$}.
\newblock Published, available at \url{http://www.math.h.kyoto-u.ac.jp/~ushiki/papers/}.

\bibitem{YL}
C.~N. Yang and T.~D. Lee.
\newblock Statistical theory of equations of state and phase transitions. {I}.
  {T}heory of condensation.
\newblock {\em Physical Rev. (2)}, 87:404--409, 1952.



\end{thebibliography}
\end{document}